\documentclass[12pt]{article}

%----------------- MACROS -------------------%

%%%%%%%%%%%%%%%%%%%%%%%%%%%%%%%%%%%%%%%%%%%%%%%%%%%
%%%%%%%%%%%%%%%%% DEFS / MACROS %%%%%%%%%%%%%%%%%%%
%%%%%%%%%%%%%%%%%%%%%%%%%%%%%%%%%%%%%%%%%%%%%%%%%%%

%--------------  PACKAGES --------------%

\usepackage[colorlinks=true,linkcolor=blue,bookmarks=true]{hyperref}
\usepackage{fancyhdr}
\usepackage{makeidx}
\usepackage{amsmath,amssymb,dsfont}
\usepackage{graphicx,color}
\usepackage{enumitem, xspace, float}
\usepackage[usenames,dvipsnames]{xcolor}
\usepackage{graphicx}
\usepackage{dsfont}
\usepackage{cases}
\usepackage{xspace}

\hfuzz=30pt
\makeindex
\frenchspacing

\def\abstract#1{{\noindent\textbf{Abstract.} \em #1}
\par
\medskip

}
%--------------  GLOBAL DEFS --------------%

\definecolor{Labo}{rgb}{0.9,0.9,1.0}
\definecolor{Black}{rgb}{0.00,0.00,0.00}
\definecolor{Red}{rgb}{1.00,0.00,0.00}
\definecolor{Blue}{rgb}{0.00,0.00,1.00}
\definecolor{White}{rgb}{1.00,1.00,1.00}
\definecolor{aqua}{rgb}{0.00,255,255}
\definecolor{Yellow}{rgb}{1.00,0.00,0.00}

\setlength{\oddsidemargin}{0.2in}
\setlength{\evensidemargin}{0mm}
\setlength{\topmargin}{5mm}
\setlength{\textheight}{8in}
\setlength{\textwidth}{6in}

\setlength{\headheight}{14.5pt}
\addtolength{\topmargin}{-2.5pt}
\parindent 10pt
\parskip5pt

%--------------  THEOREMS --------------%

\newtheorem{theorem}{Theorem}[section]
\newtheorem{lemma}[theorem]{Lemma}
\newtheorem{corollary}[theorem]{Corollary}
\newtheorem{proposition}[theorem]{Proposition}

\newtheorem{remark}[theorem]{Remark}

\numberwithin{equation}{section}

\newenvironment{proof}[1][Proof]{\medskip\noindent\emph{#1 ---\;}}{\\ 
    \null\  \hfill\textbf{Q.E.D.}\medskip}

%%%%

\def\e{\varepsilon}
\def\eps{\varepsilon}

\newcommand{\lsc}{l.s.c.\xspace}
\newcommand{\usc}{u.s.c.\xspace}

% Math

\def\be{\begin{equation}}
\def\ee{\end{equation}}

% Ensembles, Mathbb

\def\R{\mathbb R}
\def\N{\mathbb N}

\def\G{\mathbb G}

\def\Omegb{{\overline \Omega}}
\def\domeg{{\partial \Omega}}
\renewcommand{\H}{\mathcal{H}}

\newcommand{\trp}[1]{{}^t\hspace*{-0.1em} #1}

% Cal-Math

\renewcommand{\H}{\mathcal{H}}

% Points

\def\ue{u_\eps}

\def\xe{x_\eps}
\def\pe{p_\eps}
\def\te{t_\eps}

\def\xb{{\bar x}}
\def\yb{{\bar y}}

\def\tb{{\bar t}}

\def\sb{{\bar s}}

\def\tb{{\bar t}}

% Fonctions valeur

\def\u0{u_0}

% Trajectoires

% Hamiltoniens 

\def\HT{\HT}

% Limites

\def\limssup{\mathop{\rm limsup\hspace*{0.1em}\raisebox{0.25em}{$\scriptstyle \ast$}\,}}
\def\limiinf{\mathop{\rm liminf\hspace*{0.07em}\raisebox{-0.15em}{$\scriptstyle \ast$}\,}}

% d...

\newcommand{\Dxp }{D_{x'}}
\newcommand{\Ddxp }{D^2_{x'x'}}
%%%%parentesi%%%%%%%%%%%%%%%

%%%%%%%%%%%%%varie%%%%%%%%%%%%%

% Multi-dom

%%%%%%%%%%% Multi domains%%%%%%%%%%%%%%

\def\HT{{H}_T}

% Stratifie
\newcommand\PP{{\mathcal{P}}}

\newcommand{\hyp}[1]{$(\mathbf{H}_\textsc{#1})$}

%\newcommand{\C}{\mathbf{C}}

%----------------- ACRONYMS ------------------------%

\newcommand{\LCR}{{\bf (LCR)}\xspace}
\newcommand{\SCR}{{\bf (SCR)}\xspace}
\newcommand{\GCR}{{\bf (GCR)}\xspace}

%------------------ STRAT CONVERGENCE ---------------------%

 \def\sym{{\cal S}^N}

 \def\1{{1\hspace{-1.2mm}{\rm I}}}

  \newcommand{\commentout}[1]{}

\newcommand{\resp}[1]{$[\,resp.$ #1$\,]$\xspace}

\def\Tf{T_f} 

\def\PC1{\mathrm{PC}^{1}(\R^N\times [0,\Tf])}

%------------------ ABBREV -----------------------%

\newcommand{\wlg}{w.l.o.g.~} \newcommand{\ie}{i.e.~}
\newcommand{\wrt}{w.r.t.~} \newcommand{\cf}{cf.~}
%\newcommand{\adhoc}{\textit{ad hoc}\xspace}

%-------------------------------------------------%
\usepackage{adjustbox}

\newcommand{\smsp}{\ \\[3pt]}

\newenvironment{app}[3]{ \medskip \noindent #1\ --- \emph{#2}~\emph{#3}\nopagebreak}{\medskip}

\newcommand{\keypoint}[2]{% \begin{center} \begin{minipage}{0.8\textwidth} \textsc{#1 ---} \emph{#2}
\end{minipage} \end{center} }

\newcommand{\Id}{\mathop{\mathrm{Id}}}

%%%%%%%%%%%%%%%%%%%%%%%%%%%%%%%%%%%%%%%%%%%%%%%%%%%
%%%%%%%%%%%%%%%%%%%%%% END %%%%%%%%%%%%%%%%%%%%%%%%
%%%%%%%%%%%%%%%%%%%%%%%%%%%%%%%%%%%%%%%%%%%%%%%%%%%

\newcommand{\balpha}{{\bar \alpha}}

\newcommand{\Cea}{C_{\e,\alpha}}
\newcommand{\Ceb}{C_{\e,\beta}}
\newcommand{\Qbl}{{\bar Q^\ell}}
\newcommand{\pQbl}{\partial\Qbl}

\begin{document}

\title{Some Comparison Results for First-Order Hamilton-Jacobi Equations and Second-Order Fully
Nonlinear Parabolic Equations with Ventcell Boundary Conditions}

\author{Guy Barles\thanks{Institut Denis Poisson (UMR CNRS 7013).
Universit\'e de Tours, Universit\'e d'Orl\'eans, CNRS.
Parc de Grandmont.
37200 Tours, France}\ 
\footnote{\texttt{<guy.barles@idpoisson.fr>}}\ \ \ \&\ 
Emmanuel Chasseigne$^*$\footnote{\texttt{<emmanuel.chasseigne@idpoisson.fr>}}}

\date{}
\maketitle

\begin{abstract}
{\footnotesize 
    In this article, we consider fully nonlinear, possibly degenerate, parabolic equations associated
    with Ventcell boundary conditions in bounded or unbounded, smooth domains. We first analyze the
    exact form of such boundary conditions in general domains in order that the notion of viscosity
    solutions makes sense. Then we prove general comparison results, both for first- and second-order equations,
    under rather natural assumptions on the nonlinearities: $(i)$ in the second-order case, the only restrictive assumption
    is that the equation has to be strictly elliptic in the normal direction, in a neighborhood of
    the boundary; $(ii)$ in the first-order one, quasiconvexity assumptions have to be imposed both
    on the equation and the boundary condition, the equation being coercive in the normal direction.
    Our method is inspired by the ``twin blow-up method'' of Forcadel-Imbert-Monneau, that we adapt
    to a scaling consistent with the Ventcell boundary condition.} 
\end{abstract}

 \noindent {\bf Key-words}: 
 Second-order elliptic and parabolic equations, Ventcell boundary
 conditions, comparison results, viscosity solutions.  \\
{\bf MSC}:
35D40,   % Viscosity solutions
35K10  % Second-order parabolic equations
35K20 % Initial-boundary value problems for second-order parabotic equations
35B51 % Comparison results

%------
%%%%%%%%%%%%%%%%%%%%%%%%%%%%%%%%%%%%%%%%%%%%%%%%
\section{Introduction}

Introduced in 1981 by Crandall and Lions \cite{CL} (see also Crandall, Evans and Lions \cite{CEL})
for first-order Hamilton-Jacobi Equations, the notion of viscosity solutions is known to be the
right notion of weak solution to deal with second-order, fully nonlinear, possibly degenerate
elliptic or parabolic equations. Nowadays, the basic theory can be considered as being rather
complete with very general stability results, and in particular the ``Half-Relaxed Limits Method''
which can be powerfully used if the limit equation satisfies a \textit{strong comparison result},
\SCR for short, \ie a comparison result between semicontinuous sub and supersolutions. 

Such \SCR not only provide the uniqueness of solutions, they are also a key tool for obtaining their
existence via the Perron's method of Ishii \cite{Is-Per}, and they exist in almost all the
frameworks: whether the equations are set in the whole space or in bounded or unbounded domains,
with the most classical boundary conditions (Dirichlet, State-Constraint, nonlinear Neumann boundary
conditions, etc.) or for equations involving nonlocal terms (\cite{BI} and references
therein), or equations set in a network or with discontinuities (see \cite{BCbook}  and references
therein). The reader may have a first idea of this theory by looking at the ``User's guide''
of Crandall, Ishii and Lions \cite{Users}; we give more references of \SCR later in this
introduction. 

Roughly speaking, a \SCR is the analog of the Maximum Principle for classical (in other words, smooth) solutions
and, with few additional technical assumptions, a \SCR exists for any classical situation where the
equation, together with the associated boundary condition, formally satisfy the Maximum Principle. 
Of course, in the framework of viscosity solutions---which we use here---the boundary condition has
to be understood a priori in the relaxed sense given by viscosity solutions theory: either the inside equation
or the boundary condition should hold for both the subsolution and the supersolution,
see~\cite{Users}. This particularity, which is, in general, a difficulty for proving such \SCR,
is now well-addressed in most classical situations. 
However, coming back to Maximum Principles, in the case of \textit{Ventcell boundary conditions}, no
\SCR was available in the literature so far. We explain why such boundary conditions create a
specific difficulty later on. 

\bigskip

\noindent\textit{The aim of this article ---} We provide here the very first \SCR for Ventcell
boundary conditions in the viscosity solutions' framework. We immediately point out that we are
able to do so both for first-order and second-order equations, under some reasonable assumptions on the equation and
the boundary condition.
We also recall that a \SCR is actually a ``global'' comparison result, providing
comparison in all the domain---and we use below the notation \GCR instead of
\SCR to emphasize this global comparison. However, as in \cite{BCbook}, we reduce it to a ``local
comparison result'', \ie to a comparison result which holds in a small neighborhood of each point; we denote below such
local result by \LCR. This reduction to \LCR allows us to mainly consider the case of equations set in an
half-space and Section~\ref{statements} shows that our results easily extend to the case of general
regular domains via localization arguments and a straightforward local flattening of the boundary. 

\bigskip

\noindent\textit{The Ventcell boundary condition ---}
Now, in order to be more specific, we consider general fully nonlinear, possibly degenerate,
parabolic equation of the form 
\begin{equation}\label{Eqn}
u_t + F(x,t,D_x u, D^2_{xx} u)= 0 \quad \hbox{in }\Omega \times (0,T) ,
\end{equation}
where $\Omega$ is a bounded or unbounded domain of $\R^N$, the solution $u$ is a real-valued
function defined on $\Omegb \times [0,T)$, $u_t$, $D_x u, D^2_{xx} u$ denote its first and
second-derivatives with respect to $t$ and $x$ respectively. Finally, $F: \Omegb\times [0,T) \times
\R^N \times \sym \to \R$, where $\sym$ is the space of $N\times N$-symmetric matrices, is a
real-valued, continuous function satisfying the ellipticity assumption
\begin{equation}\label{ellip}
F(x,t,p_x,M_1)\leq F(x,t,p_x,M_2) \quad \hbox{if  }M_1\geq M_2 ,
\end{equation}
for any $x\in \Omegb$, $t\in [0,T)$, $p_x\in \R ^N$, $M_1,M_2\in \sym$, where ``$\geq$'' denotes the
partial ordering on symmetric matrices.

In order to introduce the Ventcell boundary condition, we first consider the case when
$\Omega$ is a half-space of $\R^N$ and, to fix ideas, we choose
\begin{equation}\label{Om-HS}
\Omega:=\{x=(x',x_N) \in \R^{N-1}\times \R, x_N>0\}.
\end{equation}
In this context, the Ventcell boundary condition for Equation~\eqref{Eqn} has the form
\begin{equation}\label{Vbc-HS}
-u_{x_N} + G(x',t,\Dxp  u, \Ddxp   u)= 0 \quad \hbox{on }\domeg \times (0,T) ,
\end{equation}
where $G$ satisfies similar assumptions as $F$, in particular an ellipticity property like
\eqref{ellip}. We point out that, in our context, $-u_{x_N}$ is nothing but the
normal derivative of $u$ on $\domeg \times (0,T)$ and therefore \eqref{Vbc-HS} is some kind of
Neumann type boundary condition.
However, this comes with an unusual dependence in the second-order
tangential derivative $\Ddxp   u$. This particularity is, of course, the main originality and
difficulty of Ventcell boundary conditions. 

\bigskip

\noindent\textit{The case of a general domain ---}
If $\Omega$ is a general smooth domain\footnote{We will precise which type of regularity we
impose later on.}, the exact form of such boundary condition and the assumptions they have to satisfy are
less clear, for at least two reasons.

First, at a point $x$ of the manifold $\domeg$, the condition has to depend on the Hessian matrix---relatively
to $\domeg$---of the solution $u:\Omegb \to \R$ but it is well-known that the definition of such
Hessian matrix on a manifold is not completely straightforward: not only does it depend on $D^2_T u$,
the $N\times N$-symmetric matrix corresponding to the restriction of the quadratic form $h\mapsto
D^2u(x)h\cdot h$\ \footnote{Here and throughout this article, $v_1\cdot v_2$ stands for the standard
euclidian scalar product of $v_1,v_2\in \R^N$.} to $T_x\domeg$ (the tangent space of $\domeg$ at
$x$), but it also depends on the curvatures of $\domeg$ at $x$.

For the time being, we just write the boundary condition in the general form
\begin{equation}\label{Vbc-GC}
\G(x,t,D u, D^2_T u)= 0 \quad \hbox{on }\domeg \times (0,T) ,
\end{equation}
where we recall that, if $n(x)$ denotes the outward normal to $\domeg$ at $x$ and $\Id$ is the
$N\times N$ Identity matrix, $D^2_Tu$ is obtained by using the projection onto $T_x\domeg$, whose
matrix is given by $\Id-n(x)\otimes n(x)$; hence the formula 
\[ D^2_T u(x):=\big(\Id-n(x)\otimes n(x)\big)\, D^2u(x)\, \big(\Id-n(x)\otimes n(x)\big)\,.\] 
We refer the reader to Section~\ref{GVbc-GD} where we explain in an elementary way what kind of
assumptions a general boundary condition like \eqref{Vbc-GC} should satisfy in order to be a ``good"
Ventcell boundary condition.

Of course, these restrictions
are of two types: the first ones are just basic compatibility conditions in order that
\eqref{Vbc-GC} is actually consistent with the Maximum Principle, and therefore that the notion of
viscosity solutions makes sense. The second ones are related to comparison results and the main
assumption consists in imposing that \eqref{Vbc-GC}  can be reduced to \eqref{Vbc-HS} by $(i)$ a
suitable change of coordinates which flattens the boundary and $(ii)$ a suitable monotonicity
property in $u_{x_N}$ after the change of coordinates in order to be able to write down the boundary
condition as \eqref{Vbc-HS}. In that way, as we explain it in Section~\ref{br}, the main step in a
comparison proof in a general domain is nothing but a local comparison result for \eqref{Vbc-HS}.

More generally, we want to point out a key idea in this article: all the local properties for
\eqref{Eqn}-\eqref{Vbc-GC} are obtained from \eqref{Eqn}-\eqref{Vbc-HS} since the mecanism
$(i)$-$(ii)$ we described above allows to reduce to this case.
Now, concerning global properties such as the existence of sub and supersolutions, which are needed
either for localizing the comparison proof or for Perron's method, we use only basic assumptions on
$\G$.  In fact, as this description suggests, most of the results are proved for
\eqref{Eqn}-\eqref{Vbc-HS}.
 
\bigskip

{
\noindent\textit{The literature on Ventcell boundary conditions ---}
Ventcell (or Ventcel) boundary conditions appear in the mathematical literature in different contexts. 
First, in modelling, these boundary conditions often arise in the study of asymptotics for thin
layers on the boundary; the results in this direction are either numerical (see, for example, Dambrine and Pierre~\cite{DP} and 
references therein) or more theoretical, using typically the Lax-Milgram Theorem in the elliptic case (Bonnaillie-No\"el et al.~\cite{{BDHV}} and references therein).
We point out that, in this direction, most of the references are concerned with numerical issues.

Closer to our motivations, these boundary conditions are shown to be naturally associated to {\em Waldenfels operators}, i.e.
to (local or nonlocal) operators which satisfy the Maximum Principle: we refer the reader to Taira~\cite{Tk} or to Priouret~\cite{Pr}
and references therein. These types of works use either classical analysis methods (Sobolev or Besov spaces, semi-groups 
theory, etc.) or connections with probability (Markov or diffusion processes) as in \cite{Pr}. The thesis of El Karoui~\cite{ElK}
seems closer to our purpose by showing that such boundary conditions are associated with diffusion processes with a reflection on $\domeg$ (see also Petit~\cite{Pf}).}

\bigskip

\noindent\textit{The difficulty to handle such boundary conditions ---}
Maybe the easiest way to explain why getting a comparison result for \eqref{Eqn}-\eqref{Vbc-HS} in
the viscosity solutions framework is difficult is to recall the method which is used to treat
nonlinear Neumann boundary conditions, \ie the case when $G$ does not depend on $\Ddxp   u$.
Initiated by Lions \cite{Li-Neu} for standard linear Neumann and oblique derivatives boundary
conditions, the method was then generalized under slightly different forms in the nonlinear setting
(with slightly different assumptions) by Ishii \cite{Is-Neu} and Barles \cite{Ba-Neu}. 

Of course, the difficulty comes from the condition at the boundary and the comparison proof consists
in building a test-function for which the Neumann boundary condition cannot hold.  With such a
property, the $F$-inequalities necessarily hold true, both for the sub and the supersolution
and, if the test-function satisfies suitable estimates, the conclusion follows. 

In order to follow this strategy, a key point is that the (weak) derivatives of the sub and
supersolution are nothing but derivatives of the test-function at the maximum or minimum point.
Therefore, these derivatives can be directly read on the test-function and inserted into the equation. 
However, for second-order terms, any comparison proof for viscosity solutions uses the
Jensen-Ishii Lemma (\cite{J2nd,I2nd}) which provides the second derivatives for the sub and
supersolution in a somewhat abstract way. In particular, there is no way to build a
test-function for which the boundary condition cannot hold.   

\bigskip

\noindent\textit{How to turn around the difficulty? The different strategies ---} In order to obtain
a comparison result for Ventcell type problems, despite of the key difficulty which is described
above, one may imagine two main strategies. 

\noindent \textbf{1.} The first one, which may appear at first glance as the most natural and
simplest one in the half-space case, consists in using a ``tangential regularization'' in the
$x'$-variable, at least for the subsolution, and some convexity assumptions on the
nonlinearities. The idea is to take advantage of having a smooth subsolution to give a sense to the
second-derivative $\Ddxp   u$ and to use it in the comparison proof. Such regularization is
already used for first-order equations in \cite{BCbook} and, to perform such regularization, the
flat boundary case is the most natural framework to begin with.

Unfortunately, this approach comes with two main defects: on one hand, even usual basic
regularization by sup and/or inf-convolution may require restrictive assumptions. This is true in particular for
second-order equations, or if we want to perform at the same time a sup-convolution on the
subsolution and an inf-convolution on the supersolution. And as a consequence, we end up with
unreasonnable hypotheses for the extensions to general domains.

The second defect is related to a further smoothing of subsolutions---in a convex framework---by
usual convolution with a mollifying kernel: even if it seems {\em formally} obvious that such
regularization procedure can be applied, the boundary condition taken in the viscosity sense and the
fact that we have to use a doubling of variables to rigourously establish the subsolution property
for the regularized function create difficulties which we were not able to handle, except in the
(very) particular case of the Appendix~(Section~\ref{app:regsubsol}).

\medskip 

\noindent \textbf{2.} The second strategy consists in trying the ``twin blow-up'' introduced
recently by Forcadel, Imbert and Monneau \cite{FIM} with the hope that the blow-up provides simpler
equations and that we could conclude through a careful examination of the super- and sub
differential (or of some suitable super and subjets) of the sub and supersolution respectively,
at a maximum point of their difference, following new arguments introduced recently by Lions and
Souganidis \cite{LiSo1,LiSo2}.

But here also we face a problem since the Lions-Souganidis arguments turn out to be mainly
one-dimensional while here, because of the Ventcell boundary condition, we have to take into account
(at least) a first derivative in $x_N$ and a second derivative in $x'$. To do so, we would need a
multi-dimensional Lions-Souganidis argument but we were unable to obtain it.

\medskip

What can be done when both strategies fail? In some sense, we combine them here, with some ad hoc adaptations
due to the Ventcell framework. The core of our comparison proofs---either in the cases of a
first-order equation or a second-order one---is an extension of the strategy of Forcadel, Imbert and
Monneau with a scaling which is adapted to the Ventcell boundary condition. More precisely, our
scheme of proof in the case of \eqref{Eqn}-\eqref{Vbc-HS} is the following:
\begin{enumerate}
    \item[$(i)$] We use an almost classical doubling of variables method but, here, in an unusual
        way: it is not the main step anymore, but some kind of ``preparation'' to the ``twin
        blow-up'' argument. Indeed, the doubling of variables allows us to reduce to the case when
        the maximum points are both on the boundary---hence preparing the twin blow-up. But it also
    gives additionnaly some useful estimates to perform the blow-up.  \item[$(ii)$] The twin blow-up
        is done in a different way here since it has to be adapted to the Ventcell boundary
        condition: we use different scalings in the tangential directions $(x',t)$ and in the normal
    one, \ie for $x_N$. We perform it not only in the equation and boundary conditions, but also in
the maximum point property related to the doubling of variables, providing useful estimates.
\item[$(iii)$] In the present situation, passing to the limit in the blow-up procedure does not
    allow to reduce to a one-dimensional problem, again because of the Ventcell boundary condition
        which mixes tangential and normal variables. Here, getting the conclusion follows different
        paths according to the cases when the equation inside the domain is a first-order or
        second-order one. In the former case, we use the first strategy described above by using a
        regularization of the subsolution. In the latter case, a suitable adaptation of the
        Jensen-Ishii Lemma allows us to take advantage of the particularity of the limiting problem.
\end{enumerate}

In order to be able to apply this strategy, we use two specific assumptions in addition to the
classical hypotheses which classically appear in such comparison results: either the equation is a
first-order equation and we require a normal coercivity property (\cf \hyp{NC} in
Section~\ref{sec:hyp}) with suitable quasiconvexity properties for $F$ and $G$, or it is a
second-order equation and we require a strong ellipticity in the normal direction, \cf \hyp{NSE} in
Section~\ref{sec:hyp}. 

A typical example that fits into the framework of this paper is the following one, posed in
$\Omegb\times[0,T]$ where $\Omega:=\{(x,y)\in\R^2:x>0,\ y\in\R\}$ \[ \begin{cases} u_t-\mathop{\rm
    Tr}\Big(A(x,y)D^2u)\Big)+b(x,y)|Du| = f(x,y) & \text{ in }\Omega\times(0,T]\,,\\[2mm] \hfill
    -\dfrac{\partial u}{\partial x}-\dfrac{\partial^2 u}{\partial y^2} = g(y)  & \text{ on
}\partial\Omega\times(0,T]\,,\\[2mm] \hfill u((x,y),0)=u_0(x,y) & \text{ in }\Omega\,.  \end{cases}
\] where we assume that $A=\sigma\trp{\sigma}$\ \footnote{Here and below $\trp{\sigma}$ denotes the
transposed matrix of the matrix $\sigma$.}, where $\sigma, b$ are bounded, Lipschitz continuous
functions on $\overline{\Omega}$ and $f,g$ are bounded and continuous on $\overline{\Omega}$ and
$\partial \Omega$ respectively. In order to satisfy our additional assumptions, we need that, either
$A\equiv 0$ and $b(x,y)\geq\alpha>0$ on $\overline{\Omega}$, or $A(x,y)$ is a symmetric positive
matrix and, with $e_N=(1,0)$, $A(x,y)e_N\cdot e_N \geq\alpha>0$ on $\overline{\Omega}$. 

It is not clear to what extent these additional assumptions, namely \hyp{NC}-\hyp{QC-G} and
\hyp{NSE}, are necessary. However, $(i)$ they really play a key role in our proofs of the comparison
results both in the first- and second-order case; $(ii)$ N. El Karoui \cite{ElK} used the
probabilistic analogue of \hyp{NSE} in her work; $(iii)$ Proposition~\ref{VBC-uec} in
Section~\ref{lpvc} shows that, if \hyp{NSE} holds then the Ventcell boundary condition is satisfied
in a strong sense.  In any case, one may think that \hyp{NC} or \hyp{NSE} ensures that the Ventcell
boundary condition is seen in a right way.

We conclude this introduction by a remark: the approach that we use here allows to treat, as a
special case, Neumann boundary condition---typically $-u_{x_N}+G(x,t,Du)=0$. However, some of the
assumptions we use in order to obtain comparison results---see \hyp{NC} and \hyp{NSE} in
Section~\ref{sec:hyp}---are clearly too restrictive compared to the ones which are used in the
literature on the Neumann case. But maybe some specific modification of our arguments allows not
only to recover all the known results but also to improve them.

\bigskip

\noindent\textit{Organization ---} In Section~\ref{GVbc-GD}, we define what a ``good'' Ventcell
boundary condition is in a general, non-flat domain. Section~\ref{br} is devoted to present basic
assumptions, notations and results to prepare the three next sections which are devoted to first
state and then prove the comparison results. In particular, we recall how to reduce the global \SCR
to a local one.  The statements of these results are provided in Section~\ref{statements} and then
we prove them in the case of first-order equations in Section~\ref{case1} and in the case of
second-order equations in Section~\ref{case2}, the proofs in these two cases being rather different
even if they use similar common ingredients. Finally, in Section~\ref{FROR}, we provide further
results, we mention some open questions and we sketch simpler proofs under more restrictive
assumptions.

\noindent{\bf Acknowledgements.} {\em This research was partially funded by l’Agence Nationale de la
Recherche (ANR), project ANR-22-CE40-0010 COSS. 

The authors would like to warmly thank the reviewer for all the work he/she did to help us improve our
article: he clearly read it very carefully and made numerous relevant comments. We owe the referee
most of the valuable improvements that were made in this article.}

%%%%%%%%%%%%%%%%%%%%%%%%%%%%%%%%%%%%%%%%%%%%%%%%%%%%%%%%%%%%%%%%%%%%%%%%%%%%%
\section{The Ventcell Boundary Condition in General Domains}\label{GVbc-GD}

As we already mentioned in the introduction, contrarily to the case of classical (Dirichlet,
Neumann, etc.) boundary conditions, the Ventcell case is particular because of the dependence in the
Hessian matrix of the solution \textit{on the boundary}. For a general boundary condition like
\eqref{Vbc-GC}, we have to investigate under which types of assumptions this boundary condition is
consistent with the Maximum Principle, and therefore for which a notion of viscosity solutions makes
sense. And to do so, we would have to use the definition of an Hessian matrix on a codimension~$1$
manifold---which is not completely straightforward.

Instead of doing that, in this section, we have chosen to present in the simplest possible way the
conditions on the function $\G$ in order that it yields a ``good'' Ventcell boundary condition. Then
we show how \eqref{Vbc-GC} can be locally reduced to \eqref{Vbc-HS} by a suitable flattening of the
boundary.

We argue assuming that the boundary $\domeg$ is as smooth as necessary---we refer the reader to
\hyp{$\Omega$} below (see Section~\ref{sec:hyp}) for a more precise assumption concerning the
regularity of the boundary.  We recall that the smoothness of $\domeg$ implies that $d$, the
distance function to $\domeg$, is smooth on $\Omegb$ in a neighborhood of $\domeg$, and that
$Dd(x)=-n(x)$ on $\domeg$; we may keep the notation $n(x)$ for $-Dd(x)$ even if $x$ is not on the
boundary. Moreover, the distance function carries other geometrical information: indeed, for any $x
\in \domeg$, the eigenvalues of $D^2d(x)$ are $-\kappa_1, -\kappa_2,\cdots, -\kappa_{N-1}$, the
principal curvatures of $\domeg$ at $x$ (See Gilbarg and Trudinger \cite{GT}, Section~14.6).

%-----------------------------------------------------
\subsection{Consistency with the Maximum Principle}

In order to answer this first question, we adopt a viscosity solution point of view---or a Maximum
Principle one---and, at least formally, we look at maximum points of $u-\phi$ where $u$ is candidate
to be a subsolution (that we assume to be smooth at first), and $\phi$ is a smooth test-function. 

We drop the $t$-variable since it plays no role in the boundary condition but the reader may easily
check that $t$ can be taken into account as any tangent variable, and so is $u_t$ which is a tangent
derivative on the boundary $\domeg\times (0,T)$.

\begin{proposition}\label{prop:ventcell.1} Let $x\in \domeg$ be a local maximum point on $\Omegb$ of
    $y\mapsto (u-\phi)(y)$. Then the following first and second-order inequalities hold:
    \begin{equation}\label{prop-bord} \begin{aligned} (i) &\quad \frac{\partial u}{\partial
        n}(x)\geq \frac{\partial\phi}{\partial n}(x)\text{ and }
%\begin{equation}\label{prop1-bord}
    D u(x)=D\phi(x)+\lambda n(x)\; \text{for some }\lambda\geq 0\,,\\
%\end{equation} \begin{equation}\label{prop2-bord}
    (ii) & \quad D^2 u(x)+\frac{\partial u}{\partial n}(x) D^2d(x) \leq  D^2 \phi(x)+\frac{\partial
\phi}{\partial n}(x) D^2d(x)\text{ in }T_x\partial\Omega\,.  \end{aligned} \end{equation}
    \end{proposition}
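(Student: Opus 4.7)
The plan is to reduce both inequalities to one-dimensional calculus by restricting $u-\phi$ to suitable smooth curves through $x$. Since the proposition is stated under the assumption that $u$ is smooth, everything follows from the chain rule and standard one-variable maximum conditions.

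For (i), I would first fix an arbitrary tangent vector $v\in T_x\domeg$ and a smooth curve $\gamma:(-\delta,\delta)\to\domeg$ with $\gamma(0)=x$ and $\gamma'(0)=v$. Since $\gamma$ takes values in $\Omegb$ and $x$ is a local maximum point of $u-\phi$ on $\Omegb$, the one-variable function $t\mapsto(u-\phi)(\gamma(t))$ has a local maximum at $t=0$, so its first derivative vanishes: $D(u-\phi)(x)\cdot v=0$. As $v$ ranges over $T_x\domeg$, this forces $D(u-\phi)(x)$ to be parallel to $n(x)$, so that $D(u-\phi)(x)=\lambda\,n(x)$ for some $\lambda\in\R$. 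The sign of $\lambda$ is obtained by testing along the segment $s\mapsto x-sn(x)$, which lies in $\Omegb$ for $s\ge 0$ small: maximality at $x$ forces the right derivative at $s=0$ to be nonpositive, i.e.\ $-\lambda\le 0$, hence $\lambda\ge 0$. This is equivalent to $\frac{\partial u}{\partial n}(x)\ge\frac{\partial\phi}{\partial n}(x)$.

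For (ii), I would use the same smooth curve $\gamma$ on $\domeg$ and set $\psi(t):=(u-\phi)(\gamma(t))$, which has a local maximum at $t=0$, giving $\psi''(0)\le 0$. The chain rule yields
\[
\psi''(0)=D^2(u-\phi)(x)\,v\cdot v+D(u-\phi)(x)\cdot\gamma''(0).
\]
The key geometric input is the identity $n(x)\cdot\gamma''(0)=D^2d(x)\,v\cdot v$, which I would obtain by differentiating the constraint $d(\gamma(t))\equiv 0$ twice at $t=0$ and using $Dd(x)=-n(x)$. Combined with $D(u-\phi)(x)=\lambda n(x)$ from (i), only the normal component of $\gamma''(0)$ appears in the chain-rule expression; in particular the result depends only on $v$ and not on the particular curve chosen. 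Plugging in and using $\lambda=\frac{\partial u}{\partial n}(x)-\frac{\partial\phi}{\partial n}(x)$ then rearranges the sign condition on $\psi''(0)$ into the desired matrix inequality on $T_x\domeg$.

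The calculation is essentially routine; the only conceptual point worth emphasizing is that the combination $D^2u(x)+\frac{\partial u}{\partial n}(x)D^2d(x)$ restricted to $T_x\domeg$ is precisely the intrinsic Hessian of $u|_{\domeg}$ viewed as a function on the Riemannian submanifold $\domeg$, which is exactly the right second-order object for a Ventcell boundary condition. The only real care needed is in tracking sign conventions for $n(x)$, $Dd(x)$, and the ordering of symmetric matrices, since the proof is extremely short once the geometric identity $n(x)\cdot\gamma''(0)=D^2d(x)\,v\cdot v$ is in hand.
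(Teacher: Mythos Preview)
Your proposal is correct and follows essentially the same route as the paper: restrict $u-\phi$ to smooth curves in $\domeg$, use the one-variable first- and second-order maximum conditions, and extract the geometric identity $n(x)\cdot\gamma''(0)=D^2d(x)\,v\cdot v$ by differentiating $d(\gamma(t))\equiv 0$ twice. The only cosmetic difference is that the paper states the normal inequality $\partial(u-\phi)/\partial n\geq 0$ as ``classical'' without writing out the segment argument you give explicitly.
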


\begin{proof} If $x\in \domeg$ is a local maximum point on $\Omegb$ of $u-\phi$, let us first notice
    that the first inequality in $(i)$---the normal direction one---is classical: \[ \frac{\partial
    (u-\phi)}{\partial n}(x) \geq 0\,.\] For the tangential direction, we consider a smooth path
    $\chi:(-\eta,+\eta)\to \domeg$ such that $\chi(0)=x$. Since $0$ is a maximum point of $s\mapsto
    (u-\phi)(\chi(s))$, by differentiating it follows that $D (u-\phi)(x)\cdot \chi'(0)=0$. 

    Moreover, using that $d(\chi(s))=0$ and differentiating this equality at $s=0$ implies that $D
    d(x)\cdot \chi'(0)=0$; in other words, $\tau=\chi'(0)$ belongs to $T_x\partial\Omega$. Hence, by
    choosing all possible paths $\chi$ as above, we deduce that, for any $\tau\in
    T_x\partial\Omega$, $D (u-\phi)(x)\cdot \tau=0$.  Therefore, there exists some $\lambda\in\R$
    such that $D u(x)=D\phi(x)+\lambda n(x)$ and necessarily $\lambda\geq0$ from the normal
    inequality we recalled above, leading to $(i)$.

    We now turn to the second-order condition. Using that $h(s):=(u-\phi)(\chi(s))$ has a maximum at
    $s=0$, the second-order condition yields \begin{equation}\label{ineq.second.order.chi}
    h''(0)=D^2 (u-\phi)(x) \chi'(0)\cdot \chi'(0)+D (u-\phi)(x)\cdot \chi''(0) \leq 0\,.
    \end{equation} Notice that $D(u-\phi)(x)\cdot\chi''(0)=\lambda n(x)\cdot\chi''(0)=-\lambda
    Dd(x)\cdot\chi''(0)$ and, using the second-order derivative of $d(\chi(s))=0$, we also have \[
        D^2 d (x) \chi'(0)\cdot \chi'(0)+D d (x)\cdot \chi''(0)=0\,.\] Gathering these informations
    and denoting by $\tau$ any vector $\chi'(0)\in T_x\partial\Omega$ as above, we arrive at \[
        h''(0)= D^2 (u-\phi)(x) \tau \cdot \tau+\lambda D^2d(x) \tau \cdot  \tau\leq0\,.\] Finally,
    since $\displaystyle \lambda=\frac{\partial (u-\phi)}{\partial n}(x) $, we arrive at
    \begin{equation}\label{prop2-bord} D^2 u(x)+\frac{\partial u}{\partial n}(x) D^2d(x) \leq D^2
        \phi(x)+\frac{\partial \phi}{\partial n}(x) D^2d(x)\,, \end{equation} on the tangent space,
which is $(ii)$.  \end{proof}

%At this point, it is worth recall that the $D^2d(x)$-matrix contains the curvatures informations
%for $\domeg$ at $x$.

\noindent\textit{Consequences on $\G$ ---} In order to take into account
    Inequalities~\eqref{prop-bord} in a proper way, \ie in order to have \[ \G\big(x,t,D\phi(x)
    ,D^2_T\phi(x)\big)\leq \G\big(x,t,Du(x),D^2_T u(x)\big)\leq 0\,,\] we have to require two
    properties on $\G$: on one hand, it is natural to write $\G$ as
    \begin{equation}\label{eqn:formG} \G(x,t,p,M_T):= \tilde G \big(x,t,p,M_T+p\cdot n(x)
    D^2d(x)\big)\,, \end{equation} for any $x\in \domeg$, $t\in [0,T)$, $p\in \R^N$ and $M_T$, where
    we recall that $M_T$ is defined for $M\in \sym$ by $M_T=(\Id-n(x)\otimes n(x))M(\Id-n(x)\otimes
    n(x))$. We remind the reader that $D^2_T d(x)=D^2d(x)$ since
    $D^2d(x)Dd(x)=-D^2d(x)n(x)=0$, this equality coming from the fact that $\vert Dd(x)\vert^2=1$.
    Of course, we have to assume that the function $\tilde  G$ is elliptic in its last variable; in
    other words, it is non-increasing in this variable in the sense of \eqref{ellip}
    \footnote{This ellipticity requirement is expected since it was expected for $\G$.}.

On the other hand, especially for \eqref{prop-bord}-$(i)$, we have to assume that, for any $\lambda
    \geq 0$, $x\in \domeg$, $t\in [0,T)$, $p\in \R^N$ and $M\in \sym$ \[ \tilde G \big(x,t,p+\lambda
    n(x),M_T+p\cdot n(x) D^2d(x)\big)-\tilde G \big(x,t,p,M_T+(p\cdot n(x)) D^2d(x)\big)\geq 0\,.
    \]

\medskip

Of course, these basic conditions are not even sufficient to define a nonlinear Neumann boundary
    condition---\ie for the case where $\tilde G(x,t,p,M_T)$ does not depend on $M_T$. They have to
    be reinforced in order to get a ``good'' Ventcell boundary condition, in particular we will
    require the more restrictive assumption that { there exists $\bar c>0$ such that, for all
    $x,t,p,M_T,\lambda$ as above\footnote{In particular, $\lambda \geq 0$.},
    \begin{equation}\label{ineq.gvbc} \tilde G \big(x,t,p+\lambda n(x),M_T\big)- \tilde G
    \big(x,t,p,M_T\big)\geq \bar c \lambda.  \end{equation} In other words, under this assumption,
    the boundary condition takes a form similar to \eqref{Vbc-HS}, with a constant $\bar c >0$
    multiplying $u_{x_N}$.} We refer to Section~\ref{sec:hyp} for the exact hypotheses and more
    details.

%--------------------------------------------------------
\subsection{Reduction to a flat comparison result}\label{RFB}

Now we turn to the second question and to do so, we examine some special change of coordinates which
maps $\{y_N=0\}$ in a neighborhood of $0\in \R^N$ into $\domeg$. If $\psi$ is such a diffeomorphism,
we change it into \[ \Psi(y',y_N): = \psi(y',0)+y_N Dd(\psi(y',0)) ,\] in that way, we have
$d(\Psi(y',y_N))=y_N$ (for $|y_N|$ small enough) . Then we set \[ v(y',y_N)=u(\Psi(y',y_N))\,.\] { \textbf{N.B.} In the
following, with a slight abuse of notations we identify the tangential gradients of the form
$(p_T,0)$ with $p_T$, similarly we identify $D_{y'}v(y',0)$ and $(D_{y'}v(y',0),0)$ and finally
$D^2_{y'y'}v((y',0))$, identified with a $N\times N$-matrix with zeros at the last line and column.}
\begin{proposition} The derivatives of $v$ are given by
    \begin{flalign*}%\label{eqn.change.coord}
        (i) \quad \dfrac{\partial v}{\partial y_N}=Du(x)\cdot Dd(x)\; , \; 
        D_{y'}v(y',0)= \trp{D\Psi}(y',0) D_Tu(x)\,. && 
    \end{flalign*}
    \begin{flalign*}
            (ii)  \quad  D^2_{y'y'}v(y',0) =   \trp{D\Psi}(y',0)\Big[D^2u(x)+ \frac{\partial u}{\partial n}(x)
        D^2 d (x)\Big]D\Psi(y',0) + a(x)(D_T u(x))\,, 
    \end{flalign*} 
    for some linear map $a(x)$ having the same
    regularity in $x$ as $D^2 \Psi$.  
\end{proposition}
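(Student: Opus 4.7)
The plan is to apply the chain rule directly to $v=u\circ\Psi$ and then exploit the fact that $\psi(\cdot,0)$ parametrizes $\partial\Omega$ (\ie $d\circ\psi(\cdot,0)\equiv 0$) to produce the curvature term in (ii).

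\smallskip

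\textbf{Step 1 (First derivatives).} A direct chain rule computation gives $Dv(y)=\trp{D\Psi(y)}\,Du(\Psi(y))$. At $y=(y',0)$, I would observe that $\partial_{y_N}\Psi(y',0)=Dd(\psi(y',0))=Dd(x)$, which yields the normal formula $\partial v/\partial y_N = Du(x)\cdot Dd(x)$ of (i). For the tangential component, the vectors $\partial_{y'_i}\Psi(y',0)=\partial_{y'_i}\psi(y',0)$ all lie in $T_x\partial\Omega$ (since $\psi(\cdot,0)$ maps into $\partial\Omega$), so only the tangential part of $Du(x)$ contributes, giving $D_{y'}v(y',0)=\trp{D\Psi(y',0)}\,D_Tu(x)$.

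\smallskip

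\textbf{Step 2 (Second derivatives: general form).} Applying the chain rule twice, I get
\[
D^2_{y'y'}v(y',0)=\trp{D_{y'}\Psi(y',0)}\,D^2u(x)\,D_{y'}\Psi(y',0)+\sum_{k=1}^N \frac{\partial u}{\partial x_k}(x)\,D^2_{y'y'}\Psi_k(y',0).
\]
Since $\Psi_k(y',y_N)=\psi_k(y',0)+y_N[Dd(\psi(y',0))]_k$, at $y_N=0$ we have $D^2_{y'y'}\Psi_k(y',0)=D^2_{y'y'}\psi_k(y',0)$, so the second sum becomes $Du(x)\cdot D^2_{y'y'}\psi(y',0)$ acting componentwise.

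\smallskip

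\textbf{Step 3 (Extracting the curvature term).} The key identity comes from differentiating $d(\psi(y',0))\equiv 0$ twice with respect to $y'_i,y'_j$:
\[
D^2d(x)\,\partial_{y'_i}\psi\cdot\partial_{y'_j}\psi+Dd(x)\cdot\partial^2_{y'_iy'_j}\psi=0.
\]
Splitting $Du(x)=(Du(x)\cdot Dd(x))Dd(x)+D_Tu(x)$ (using $|Dd|=1$ on $\partial\Omega$ and $Dd=-n$), the normal part of $\sum_k (\partial u/\partial x_k)\,\partial^2_{y'_iy'_j}\psi_k$ becomes
\[
(Du(x)\cdot Dd(x))\,\bigl(Dd(x)\cdot\partial^2_{y'_iy'_j}\psi\bigr)=\frac{\partial u}{\partial n}(x)\,D^2d(x)\,\partial_{y'_i}\psi\cdot\partial_{y'_j}\psi,
\]
which is exactly the $(i,j)$ entry of $\trp{D\Psi(y',0)}\bigl[\tfrac{\partial u}{\partial n}(x)D^2d(x)\bigr]D\Psi(y',0)$. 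The remaining tangential part $D_Tu(x)\cdot\partial^2_{y'_iy'_j}\psi$ is linear in $D_Tu(x)$ with coefficients involving $D^2\psi$ (equivalently $D^2\Psi$ restricted to the boundary); this is the linear map $a(x)(D_Tu(x))$.

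\smallskip

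\textbf{Main obstacle.} The only nontrivial point is the manipulation in Step~3: correctly identifying the curvature contribution by differentiating the defining constraint of the boundary and isolating it from the tangential piece. Once this algebraic decomposition is in place, combining it with Step~2 produces formula (ii) exactly as stated, with the regularity of $a(x)$ inherited from $D^2\Psi$.
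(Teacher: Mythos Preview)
Your proof is correct and follows essentially the same approach as the paper: chain rule for first and second derivatives, twice differentiating the constraint $d(\psi(y',0))\equiv 0$, and then splitting $Du(x)$ into its normal and tangential parts to isolate the $\tfrac{\partial u}{\partial n}D^2d$ curvature contribution. The only stylistic difference is that the paper works with directional derivatives in a test direction $h'$ while you work componentwise in indices $i,j$, but the logic and the key identity are identical.
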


We refer the reader to the second section of the ``User's guide'' of Crandall, Ishii
and Lions \cite{Users} where related results are given (see in particular (2.15) in \cite{Users}).

\begin{proof} Let us compute the $y'$-derivatives of $v$ for $y_N=0$ in a direction $h=(h',0)\in
    \R^{N}$.  Using the notation $x=\Psi(y',y_N)$ to have simpler formulas, we get \[
        \begin{aligned} &  \frac{\partial v}{\partial y_N}(y',0)=Du(x)\cdot Dd(x)\; ,\;
            D_{y'}v(y',0)\cdot h' = Du(x)\cdot D\psi(y',0) h'\,,\\[2mm] &  D^2_{y'y'}v(y',0)h'\cdot
        h' = D^2u(x)D\psi(y',0) h' \cdot D\psi(y',0) h'\\
            & \hspace*{4cm} + Du(x)\cdot D(D\psi(y',0)) (h',h')\,.
        \end{aligned} \] 
    Next, applying these formulas to $y_N=d(\Psi(y',y_N))$, in other words,
    taking $u=d$, we obtain \[ \begin{aligned} 0 &= Dd(x)\cdot D\psi(y',0) h' \,,\\ 0 &= D^2 d
    (x)D\psi(y',0) h'\cdot D\psi(y',0) h' + Dd(x)\cdot D(D\psi(y',0)) (h',h')\,.  \end{aligned} \]
    Coming back to the first-order derivatives of $v(y',0)$, since $h'$ is arbitrary we deduce that
    $D_{y'}v(y',0)=\trp{D\psi}(y',0) Du(x) = \trp{D\Psi}(y',0) D_Tu(x)$ since $\trp{D\psi}(y',0)
    Dd(x)=0$ (we use here the aforementioned abuse of notations). This yields directly $(i)$.

    Now we decompose $Du=\dfrac{\partial u}{\partial n}n(x)+D_Tu$. Using that $n(x)=-Dd(x)$ we see
    that \[ \begin{aligned} \frac{\partial u}{\partial n}(x) n(x)\cdot D(D\psi(y',0) h') &= -
        \frac{\partial u}{\partial n}(x) Dd(x)\cdot D(D\psi(y',0) h') \\ &= + \frac{\partial
        u}{\partial n}(x)   D^2d(x)D\psi(y',0)h'\cdot D\psi(y',0)h'\,.  \end{aligned} \] Gathering
        everything we obtain \begin{align*} D^2_{y'y'}v(y',0)h'\cdot h' = & \left[D^2u(x)+
            \frac{\partial u}{\partial n}(x) D^2 d (x)\right]D\psi(y',0) h' \cdot D\psi(y',0) h' +
            \\  & \ \qquad \qquad \qquad D_T u(x)\cdot D(D\psi(y',0)) (h',h')\,.  \end{align*}
            Finally, since for $y_N=0$, we have $ D\psi(y',0) h'= D\Psi(y',0) h$ and since this
            vector is arbitrary in $T_x\partial\Omega$, we deduce that \[  D^2_{y'y'}v(y',0) =
            \trp{D\Psi}(y',0)\left[D^2u(x)+ \frac{\partial u}{\partial n}(x) D^2 d (x)\right]
D\Psi(y',0) + a(x)(D_T u(x)),\] where $a(x)$ acts linearly on $D_Tu$ and it has the same regularity
in $x$ as $D^2 \Psi$. Hence $(ii)$ holds.  \end{proof}

\noindent\textit{Consequences on $\G$ ---} These properties show that the ``flat'' Hessian matrix
$D^2_{y'y'}v(y',0)$ corresponds to $\displaystyle D_T^2u(x)+\frac{\partial u}{\partial n}(x) D^2 d
(x)$ through the change of coordinates modulo a term depending only on $D_Tu(x)$, the latter
corresponding to $D_{y'}v(y',0)$. Moreover, this formula can easily be inverted.

More precisely, if $u$ is a subsolution \resp{super-solution} of \eqref{Vbc-GC}, then $v$ is a
subsolution \resp{super-solution} of 
\[ \tilde G \Big(\Psi(y),t, P\big(y,t,D_y
v(y,t)\big),\mathcal{M} \big(y,t,{D_{y'}}v(y,t), D^2_{y'y'}v(y,t)\big)\Big)=0\,,\] 
where, for
$y=(y',0) \in \R^N$ close to $0$, $t\in [0,T]$, $q=(q',q_N)$ with $q'\in \R^{N-1}$ and for any
$(N-1)\times (N-1)$-symmetric matrix $M_T$ 
\begin{equation}\label{def:P} 
    P\left(y,t,q\right) = (\,\trp{D\Psi})^{-1}(y)(q',0)- q_Nn(\Psi(y)), 
\end{equation} 
and $\mathcal{M} \big(y,t,q', M_T\big)$ is
given by \[(\,\trp{D\Psi})^{-1}(y)\left[M_T-a(\Psi(y))
(\,\trp{D\Psi})^{-1}(y)(q',0)\right](D\Psi)^{-1}(y)\,.\]

Two remarks on this admittedly complicated formula: on one hand, in order to recover the term
$-\partial_{y_N} v$, one can use \eqref{ineq.gvbc}; this is the purpose of Lemma~\ref{V-flat} below.
On the other hand, the presence of the term $a(\Psi(y))(\,\trp{D\Psi})^{-1}(y)D_{y'}v(y,t)$ perturbs
the assumption we have to impose on $\tilde G$ to be able to use the Jensen-Ishi Lemma and justify
the unusual form of \hyp{Cont} below.

This allows to show that a ``good'' Ventcell boundary condition---in the sense of
Section~\ref{sec:hyp}---is locally equivalent to a ``good'' Ventcell boundary condition in the case
of a flat boundary. Moreover, the result below proves that the boundary condition can be reduced to
the form \eqref{Vbc-HS} with suitable properties on the nonlinearities. 

\begin{lemma}\label{V-flat} Let us assume that $\tilde G$ is a continuous function, which satisfies
    \eqref{ineq.gvbc}, and that $\domeg$ is smooth.  Then there exists a continuous function $G$
    such that \[ \tilde G \big(\Psi(y),t, P(y,t,p'-\lambda e_N),\mathcal{M} (y,t,p', M_T)\big)\] has
    the same sign as \[ -\lambda + G(y,t,p',M_T)\,.\] As a consequence, an equation with the
    boundary conditions $G$ and $\tilde G$ have the same subsolutions and the same supersolutions.
    Moreover, if $\tilde G$ satisfies the hypothesis \hyp{Gen} and/or \hyp{Cont} which are given
    below, then $G$ satisfies them too.  Finally, if $(p,M_T) \mapsto\tilde G(x,t,p,M_T)$ is
quasiconvex for any $x,t$, then so is $G$ \wrt $p',M_T$.  \end{lemma}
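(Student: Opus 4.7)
My approach is to define $G$ implicitly by solving the boundary equation $\tilde G=0$ for the normal component of the gradient, using \eqref{ineq.gvbc} as the non-degeneracy that makes this possible.

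The key observation is that, by formula \eqref{def:P} and the identification of $p'-\lambda e_N$ with $(p',-\lambda)\in\R^{N-1}\times\R$,
\[
P(y,t,p'-\lambda e_N) \;=\; (\trp{D\Psi})^{-1}(y)\,p'+\lambda\, n(\Psi(y))\,,
\]
so that the $\lambda$-dependence inside $\tilde G$ enters only through the additive term $\lambda\,n(\Psi(y))$ in the gradient slot. Fixing $(y,t,p',M_T)$ and setting
\[
\Phi(\lambda)\;:=\;\tilde G\bigl(\Psi(y),t,\,(\trp{D\Psi})^{-1}(y)p'+\lambda\,n(\Psi(y)),\,\mathcal{M}(y,t,p',M_T)\bigr),
\]
the hypothesis \eqref{ineq.gvbc} applied with $p=(\trp{D\Psi})^{-1}(y)p'+\lambda_1\,n(\Psi(y))$ and increment $(\lambda_2-\lambda_1)\,n(\Psi(y))$ gives, for $\lambda_2\geq\lambda_1$, the uniform slope bound $\Phi(\lambda_2)-\Phi(\lambda_1)\geq\bar c(\lambda_2-\lambda_1)$. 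Combined with continuity of $\tilde G$, this makes $\Phi$ a continuous, strictly monotone surjection of $\R$ onto $\R$, and I define $G(y,t,p',M_T)$ as its unique zero.

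Strict monotonicity of $\Phi$ with zero at $\lambda = G(y,t,p',M_T)$ is precisely the sign-matching statement: $\Phi(\lambda)$ and $-\lambda+G(y,t,p',M_T)$ vanish at the same point and change sign monotonically there, so they carry identical sign information as functions of $\lambda$. The equivalence of viscosity sub- and supersolutions follows at once, since these notions use only the sign of the boundary functional evaluated at test-function derivatives. Continuity of $G$ is then an implicit-function-type argument: for $(y_n,t_n,p'_n,M_n)\to(y,t,p',M_T)$, setting $G_n:=G(y_n,t_n,p'_n,M_n)$, the identities $\Phi_n(G_n)=0$ combined with the uniform slope $\bar c$ and continuity of $\tilde G$, $P$ and $\mathcal{M}$ force any accumulation point of $(G_n)$ to solve $\Phi(\cdot)=0$ for the limit data, whence $G_n\to G(y,t,p',M_T)$ by uniqueness.

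For the inheritance of \hyp{Gen} and \hyp{Cont}, I will use the identity $\Phi(G)=0$ at two nearby points $(y,t,p',M_T)$ and $(\bar y,\bar t,\bar p',\bar M_T)$ and invoke the uniform slope $\bar c$ in the normal direction to convert moduli of continuity and structural estimates on $\tilde G$ into corresponding ones on $G$, composed with the smooth geometric data $\Psi$, $D\Psi$, $n(\Psi(\cdot))$ and the linear map $a(\Psi(\cdot))$ appearing inside $\mathcal{M}$. I expect this last step to be the main obstacle, since \hyp{Cont} is calibrated to the Jensen-Ishii perturbation machinery used in the comparison proofs, and one must match its precise quantitative form while handling the coupling term $a(\Psi(y))(\trp{D\Psi})^{-1}(y)\,q'$ that enters $\mathcal{M}$. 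Because $a$ and $\Psi$ inherit the smoothness of $\partial\Omega$ and the $q'$-dependence of $\mathcal{M}$ is linear, no structural obstruction arises, but the transfer must be checked clause by clause.
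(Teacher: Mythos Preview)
Your construction is exactly the paper's: define $G(y,t,p',M_T)$ as the unique zero of the strictly monotone map $\lambda\mapsto\tilde G\bigl(\Psi(y),t,P(y,t,p'-\lambda e_N),\mathcal M(y,t,p',M_T)\bigr)$, the monotonicity being supplied by \eqref{ineq.gvbc}. Two remarks.

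First, watch the sign. With your formula $P(y,t,p'-\lambda e_N)=(\trp{D\Psi})^{-1}(y)p'+\lambda\,n(\Psi(y))$ and \eqref{ineq.gvbc}, your $\Phi$ is \emph{increasing} in $\lambda$; hence $\Phi(\lambda)>0\Leftrightarrow\lambda>G\Leftrightarrow -\lambda+G<0$, so $\Phi(\lambda)$ and $-\lambda+G$ have \emph{opposite} signs. Your phrase ``identical sign information'' papers over this, and taken literally it would turn the subsolution inequality $\Phi\le 0$ into $-\lambda+G\ge 0$, i.e.\ a supersolution inequality after flattening. This is only bookkeeping (replace $G$ by $-G$, or revisit the sign conventions in \eqref{def:P} and the lemma statement), but it must be sorted out.

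Second, the transfer of \hyp{Gen} and \hyp{Cont} that you flag as ``the main obstacle'' needing ``clause by clause'' verification admits a one-line device. Write $X=(y,t,p',M_T)$ and $f(X,\lambda)$ for your $\Phi$ at data $X$. If $G(X')\ge G(X)$, the uniform slope $\bar c$ together with $f(X,G(X))=f(X',G(X'))=0$ gives
\[
\bar c\,\bigl(G(X')-G(X)\bigr)\;\le\;f(X',G(X'))-f(X',G(X))\;=\;f(X,G(X))-f(X',G(X))\,.
\]
Thus $|G(X')-G(X)|$ is bounded by the variation of $f$ in $X$ at a \emph{frozen} $\lambda=G(X)$, and every modulus or Lipschitz estimate satisfied by $\tilde G$ (composed with the smooth maps $\Psi,P,\mathcal M$) passes directly to $G$. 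This is the paper's argument, and it replaces the sequential continuity proof and the anticipated case-by-case check in one stroke.
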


\begin{proof} We first notice that we can assume that
    $\bar c=1$ by dividing $\tilde G$ by $\bar c$. Then, if $\mathcal{D}:=\R^{N-1}\times[0,T]\times
    \R^{N-1}\times \mathcal{S}^{N-1}$, we consider the function $f:\mathcal{D}\times \R \to\R$
    defined by \[ f(X,\lambda) := \tilde G \big(\Psi(y),t, P(y,t,p'-\lambda e_N),\mathcal{M}
    (y,t,p', M_T)\big)\] where $X=(y,t,p',M_T)$. 

    The property of $\tilde G$ implies that, for all fixed $X$ and for all $\lambda' \geq \lambda$,
    we have \[f(X,\lambda')-f(X,\lambda)\leq - (\lambda'-\lambda).\] Hence, for all fixed $X$, the
    function $\lambda \mapsto f(X,\lambda)$ is a one-to-one function from $\R$ into $\R$ and there
    exists a unique $G(X)$ such that \[ f(X,G(X))=0,\] and clearly $f(X,\lambda)$ has the same sign
    as $-\lambda+G(X)$.

    For the properties of $G$, we just write that, if $X,X'$ satisfy $G(X')\geq G(X)$ then, by using
    the above monotonicity property of $f$ in $\lambda$ and the fact that $f(X',G(X'))=f(X,G(X))=0$,
    we have \[ \begin{aligned} G(X')-G(X) & \leq f(X',G(X'))-f(X',G(X)),\\ & \leq
    f(X,G(X))-f(X',G(X)).  \end{aligned} \] This inequality allows to transfer all the continuity
    properties of $f$ in $X$ to $G$ and we trust the reader to complete the proof by using this
    property.

Concerning the quasiconvexity property, first notice that for any $z \in\R$, with the notations as
    above, the sub-level sets of $f$ and $G$ satisfy \[ \{f(X,\lambda)\leq z\}=\{G(X)\leq
    z+\lambda\}\,, \] which implies that $f$ is quasiconvex (with respect to $p'$) if and only if
    $G$ is. Now, since the transforms $P(y,t,q), \mathcal{M}(y,t,q',M_T)$ are linear (and invertible) with respect to $(p',M_T)$ for fixed $q_N$, it
follows that the quasiconvexity property of $\tilde G$ implies the quasiconvexity of $f$, which
ends the proof.  \end{proof}

\

A final remark concerns the distance function which is classically used to build sub- and
supersolutions. Of course, it plays this role also here; but in order to be able to do so, the form
of $G$, namely \eqref{eqn:formG}, is essential and we point it out in the
\begin{lemma}\label{dist-func} Let $\psi:\R\to\R$ be a smooth, increasing function. Then, the
    function $w:=\psi(d)$ satisfies \[D^2_Tw(x)=\psi'(d(x))D^2d(x)\,.\] Moreover, if
\eqref{eqn:formG} and \eqref{ineq.gvbc} hold, then \[\G(x,t,Dw,D_T^2w)\leq \tilde G(x,t,0,0)-\bar
c\psi'(d(x))\,.\] \end{lemma}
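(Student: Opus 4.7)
The plan is to carry out a direct computation in two steps: first establish the identity for the tangential Hessian, then substitute into $\G$ and apply the monotonicity assumption \eqref{ineq.gvbc}.

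For the first step, I would differentiate $w=\psi(d)$ twice, obtaining $Dw = \psi'(d)\,Dd$ and $D^2 w = \psi''(d)\, Dd\otimes Dd + \psi'(d)\, D^2 d$. The key observation here---and the only non-bookkeeping input of the proof---is that smoothness of $\domeg$ ensures $|Dd|\equiv 1$ in a tubular neighborhood of $\domeg$; differentiating this identity yields $D^2 d\cdot Dd = 0$ in that neighborhood, hence $D^2 d\cdot n = 0$ since $n = -Dd$. This has two consequences when applying the projection $(\Id - n\otimes n)$ from both sides: it leaves $D^2 d$ invariant, and it kills $Dd\otimes Dd = n\otimes n$. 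The first identity $D^2_T w(x) = \psi'(d(x))\, D^2 d(x)$ follows immediately.

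For the second step, I would substitute $p = Dw = -\psi'(d)\, n(x)$ and $M_T = D^2_T w = \psi'(d)\, D^2 d$ into \eqref{eqn:formG}. Since $p\cdot n = -\psi'(d)$, the ``correction term'' $(p\cdot n)\, D^2 d = -\psi'(d)\, D^2 d$ exactly cancels $M_T$, so that
\[ \G\bigl(x,t,Dw,D^2_T w\bigr) = \tilde G\bigl(x,t,-\psi'(d)\,n(x),0\bigr). \]
Finally, I would apply \eqref{ineq.gvbc} with base point $p = 0$, $M_T = 0$ and the admissible increment $\lambda = \psi'(d(x))\geq 0$ (nonnegative because $\psi$ is increasing) to conclude
\[ \tilde G\bigl(x,t,-\psi'(d)\,n(x),0\bigr) \;\leq\; \tilde G(x,t,0,0) - \bar c\,\psi'(d(x)), \]
which is the claimed estimate.

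Overall, there is no substantial obstacle in this lemma---it is essentially a direct computation designed to display how the distance function interacts with the structural assumptions on $\G$. The single delicate point is recognizing that $D^2 d$ annihilates $n$, which is precisely what makes the correction term in \eqref{eqn:formG} align with $D^2_T w$ so cleanly and allows the monotonicity assumption \eqref{ineq.gvbc} to yield the gain $-\bar c\,\psi'(d)$ expected from a distance-like barrier.
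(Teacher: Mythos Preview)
Your proposal is correct and follows essentially the same route as the paper: compute $D^2 w$, kill the $Dd\otimes Dd$ term and leave $D^2 d$ invariant under the tangential projection, then substitute into \eqref{eqn:formG} so that the correction term cancels $M_T$, and finish with \eqref{ineq.gvbc}. One small slip: when invoking \eqref{ineq.gvbc} at the end, the base point should be $p=-\psi'(d)\,n(x)$ (with increment $\lambda=\psi'(d)$, so that $p+\lambda n=0$), not $p=0$; your stated conclusion is correct, but with $p=0$ the inequality would go the wrong way.
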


\begin{proof} A straightforward computation shows that $(Dd(x)\otimes Dd(x))_T=0$ and $D_T ^2
    d(x)=D^2 d(x)$, which implies directly $D^2_Tw(x)=\psi'(d(x))D^2d(x)$. 

    Now, if $\psi'>0$, \eqref{eqn:formG} and \eqref{ineq.gvbc} hold, then \[ \begin{aligned}
        \G(x,t,Dw,D_T^2w) &:= \tilde G\Big(x,t,\psi'(d(x))Dd(x), \psi'(d(x))D^2 d(x)+\\ & \ \qquad
        \qquad \qquad \big(\psi'(d(x))Dd(x) \cdot n(x)\big) D^2d(x)\Big),\\ &= \tilde
        G(x,t,-\psi'(d(x))n(x), 0),\\[2mm] &\leq  \tilde  G(x,t,0, 0)-\bar c \psi'(d(x))\,.
    \end{aligned} \] In this computation, we used that $Dd(x)=-n(x)$ both for the gradient term and
the $D^2d(x)$ one, which disappears since $Dd(x)\cdot n(x)=-1$.  \end{proof}

This property allows to consider suitable choices of $\psi$ when building subsolutions. Of course, a
similar result holds for supersolutions when $\psi'<0$.

%%%%%%%%%%%%%%%%%%%%%%%%%%%%%%%%%%%%%%%%%%%%%%%%
\section{Preliminaries}\label{br}

\def\Qxtrh{Q_{x,t}^{r,h}} \def\oQxtrh{\overline{Q_{x,t}^{r,h}}}

In this section we first list the exact hypotheses we are going to use in the sequel: on one hand,
we distinguish between ``basic assumptions'' which, in some sense, are the keystones of our
framework and, in particular, define what a Ventcell boundary condition is; on the other hand, we
have more specific assumptions which are required to obtain comparison results both in the cases
when $F$ is a first-order equation and when it is a second-order one. Then we devote several
subsections to preliminary results that are used later on.

%----------------------------------------------------------------------
\subsection{Hypotheses} \label{sec:hyp}

We begin with the assumption on $\Omega$ which is required in order to handle a Ventcell boundary
condition in a general  domain, see Section~\ref{GVbc-GD}. { Some of these assumptions may look a
little bit strange but the reason is rather simple: we need hypotheses on $F$ and $\G$ ensuring that
the nonlinearities obtained after the change of variables which is described in Section~\ref{RFB}
satisfy standard requirements.}

\begin{app}{\hyp{$\Omega$}}{Regularity of the domain.}{\smsp The (bounded or unbounded) domain
    $\Omega$ is of class $W^{4,\infty}$: there exists a bounded, $W^{4,\infty}$-function $d:\Omegb
    \to \R$ which agrees with the distance function in a neighborhood of $\domeg$ and such that
    $d(x)>0$ in $\Omega$ \footnote{Hence $d(x)=0$ iff $x\in \domeg$ and we recall that, if $x\in
\domeg$, $Dd(x)=-n(x)$ where $n(x)$ is the outward unit normal to $\domeg $ at $x$}.  } \end{app}

We point out that, for some results, the function $d$ being $C^2$, with bounded first and second
derivatives, is sufficient but to simplify matter, we only use \hyp{$\Omega$} in the paper. The
$W^{4,\infty}$-regularity is justified by the change of variable we perform in
Section~\ref{GVbc-GD}: we claim that the linear map $a(x)$ has the same regularity as $D^2\Psi$ and
has to be Lipschitz continuous. But $\Psi$ is built with $Dd$ and therefore the regularity of
$D^2\Psi$ cannot be better that the one of $D^3d(x)$, hence implying the $W^{4,\infty}$-regularity.

We then proceed with the standard hypotheses on the nonlinearities that are generally needed to use
the viscosity solutions' framework. To avoid repeating the same assumptions for $F$ and $G$---and to
point out that they are actually the same---, we introduce $H: A \times[0,T)\times \R^d \times
\mathcal{S}^d\to\R$ having in mind two cases \begin{enumerate} \item[(a)] $A=\Omega$, $d=N$ and
$H=F$; \item[(b)] $A=\domeg$, $d=N-1$ and $H=\G$.  \end{enumerate} We also use the notation
    $z=(x,t)$ with the usual distance $|z|^2=|x|^2+|t|^2$ and denote by $\lVert\cdot\rVert$ a
    matricial norm on $\mathcal{S}^d$.

The ``basic assumptions'' we mention above are

\begin{app}{\hyp{Gen}}{General assumptions on the Hamiltonians.} {\\ The nonlinearities $F,\G$ are
continuous functions and, with the above conventions (a)-(b), we have 
\begin{enumerate} 
    \item[$(i)$] \textit{Lipschitz continuity.}\\
        There exists a constant $C>0$ such that, for any $x\in
            A, t\in [0,T), p_1,p_2\in \R^d$ and $M_1,M_2\in \mathcal{S}^d$ 
            \[ \vert H(x,t,p_1,M_1)- H(x,t,p_2,M_2)\vert \leq C\Big(|p_1-p_2|+\lVert M_1 - M_2\rVert \Big).\]
    \item[$(ii)$] \textit{Degenerate ellipticity for the second-order case.} \\ For any $x\in A,
        t\in [0,T), p\in \R^d$ and $M_1,M_2\in \mathcal{S}^d$ \[ H(x,t,p,M_1)\leq H(x,t,p,M_2)
        \quad\hbox{if  }M_1 \geq M_2,\] where ``$\,\geq\,$'' denotes the partial ordering on symmetric matrices.  
\end{enumerate} 
Moreover, the function $\G=\G(x,t,p,M_T)$ has the form \eqref{eqn:formG} and 
\begin{enumerate} 
        \item[$(iii)$] There exists a constant $\bar c >0$ such that, for any
        $\lambda \geq 0$, $x\in \domeg$, $t\in [0,T)$, $p\in \R^N$ and $M\in \sym$ 
        \[\tilde G  \big(x,t,p+\lambda n(x),M_T\big)- \tilde G \big(x,t,p,M_T\big)\geq \bar c \lambda.\]
\end{enumerate}}
\end{app} We immediately point out that it is equivalent to say that $\G$ or
    $\tilde G$ satisfies \hyp{Gen}-$(i)$-$(ii)$. Now, of course Assumption~\hyp{Gen} is not
    sufficient to prove comparison results and we introduce the following (almost classical)
    assumption in which $B_A(0,R)$ denotes $B(0,R)\cap A$. Of course, we still use the above
    conventions (a)-(b) { and we remind the reader that he/she has to think about the change of
    variables of Section~\ref{RFB} to understand the modifications that we have to make on the
    standard hypothesis.}
   
%   \newpage

\begin{app}{\hyp{Cont}}{Continuity assumption for the comparison result.}{\smsp For any $R,K>0$ and
    for any function $Q: \overline{B_A(0,R)}\times [0,T]\times \R^d \to \mathcal{S}^d$ such that,
    for any $z=(x,t),\tilde z=(\tilde x,\tilde t) \in \overline{B_A(0,R)}\times [0,T]$, $p\in \R^d$
    \[ \lVert Q(z,p)\rVert \leq K(1+|p|)\; , \; \lVert Q(z,p)-Q(\tilde z,p)\rVert \leq K|z-\tilde
    z|(1+|p|) ,\] there exists a modulus of continuity $\omega_{R,K}$ such that, for any
    $z,\tilde z\in \overline{B_A(0,R)}\times [0,T]$, $p\in\R^d$ and for any $X,Y \in
    \mathcal{S}^d$ satisfying \begin{equation}\label{ineqmat}
        \left[\begin{array}{cc}X&0\\0&-Y\end{array}\right] \le \frac1{\eps^2}
            \left[\begin{array}{cc}\Id&-\Id\\-\Id&\Id\end{array}\right]+
                \left[\begin{array}{cc}Q(z,p)&0\\ 0&-Q(\tilde z,p) \end{array}\right] +
                    \delta\left[\begin{array}{cc}\Id&0\\0&\Id\end{array}\right] \end{equation} for
                        some $\eps,\delta >0$, then we have \begin{equation}\label{cond:a31}
                        H(z,p,Y) - H(\tilde z,p,X) \le \omega_{R,K} \Big(|z-\tilde z|(1+|p|)+
                        \eps^{-2}|z-\tilde z|^2\Big)+ \omega_{R,K} (\delta).  \end{equation}}
    \end{app}

\vspace*{-5mm}

As a first remark, since $F$ can be a first-order equation, we remark that, in this case, this
    continuity hypothesis reduces to

\begin{app}{\hyp{Cont}}{First-order case.}{\smsp For any $R>0$ there exists a modulus of continuity
    $\omega_{R}:[0,+\infty\to [0,+\infty)$ such that, for any $z=(x,t),\tilde z=(\tilde x,\tilde
    t)\in \overline{B_{\Omegb}(0,R)}\times [0,T]$, $p\in\R^d$, \begin{equation}\label{cond:a31-p}
        F(z,p) - F(\tilde z,p) \le \omega_{R} \Big(|z-\tilde{z}|(1+|p|)\Big)\,.  \end{equation}}
\end{app}

\vspace*{-5mm}

In the classical case, the $Q$-term in Hypothesis~\hyp{Cont} does not exist; here it comes from the
    change of coordinates we perform in a neighborhood of the boundary and therefore appear only in
    the second-order case, \cf \eqref{cond:a31-p}; therefore, this term is needed only in such
    neighborhood. In order to keep things as simple as possible, we do not try to generalize this
    assumption to take this remark into account.

In fact, this assumption as the classical one is satisfied by Hamilton-Jacobi-Bellman or more
    generally by Isaacs Equations under standard assumptions, namely if $F$ is given by \[
        \sup_{\alpha} \inf_{\beta}\Big\{ -{\rm Tr}(a(x,t,\alpha,\beta)M)-b(x,t,\alpha,\beta)\cdot p
        -f(x,t,\alpha,\beta) \Big\},\] where $a=\sigma (x,t,\alpha,\beta)\,\trp{\sigma}
    (x,t,\alpha,\beta)$, the functions $\sigma (x,t,\alpha,\beta)$ and $b(x,t,\alpha,\beta)$ being
    bounded, locally Lipschitz continuous in $(x,t)$ uniformly w.r.t. $\alpha,\beta$ and $f
    (x,t,\alpha,\beta)$ is continuous in $(x,t)$ uniformly w.r.t. $\alpha,\beta$. For $\G$, we may
    take into account nonlinearities given by similar and properly adapted formulas.

Now we introduce some specific requirements on $F$ in the normal direction to the boundary. These
    are not the same according to the first or second order case. These conditions will play a
    crucial role and in order to get a comparison result---we refer to the book of the
    authors~\cite{BCbook} for detailed explanations on the role of the normal coercivity in the
    first-order case. In the second-order case, the ingredient that replaces the coercivity is the
    normal strong ellipticity as will be clear in the comparison proof below.

We give these two assumptions in the ``flat case'', i.e. when $\Omega:=\{x_N>0\}$, but one can
    translate them in a straightforward way for the general case.

%\newpage

\begin{app}{\hyp{NC}}{Normal coercivity and quasiconvexity, first-order case.}{\smsp {   For any
    $(x,t)\in \domeg \times [0,T]$, the function $p\mapsto F(x,t,p)$ is quasiconvex and there exists
    $r,\bar \eta, \bar C >0$ such that, for any $(y,s)\in \Omegb$ satisfying $|y-x|+|s-t|<r$, $p'\in
    \R^N$ such that $p'\cdot e_N=0$, $\lambda \in \R$, \[ F(y,s,p'+\lambda e_N) \geq \bar \eta
|\lambda| -\bar C(1+|p'|).\]}} \end{app} 

Still for the first-order case, we require some
quasiconvexity assumption on $G$:

\begin{app}{\hyp{QC-G}}{Quasiconvexity of the Ventcell boundary condition.}{\smsp For any 
    $(x,t,p')\in \domeg \times [0,T]\times \R^{N-1}$, the function $M_T \mapsto G(x,t,p',M_T)$ is quasiconvex.}
\end{app}

We point out that both quasiconvexity assumptions on $F$
and $G$ can be stated equivalently either in the general case or in the flat boundary case by the
same argument used in the proof of Lemma~\ref{V-flat}.

Finally we impose the following assumption on $F$ in the second-order case:

\begin{app}{\hyp{NSE}}{Normal strong ellipticity, second-order case.}{\smsp For any $(x,t)\in \domeg
    \times (0,T)$ there exists $r,\bar \eta, \bar C >0$ such that, for any $(y,s)\in \Omegb$
    satisfying $|y-x|+|s-t|<r$, $p\in \R^N$, $M\in \sym$ and $\lambda \in \R$, \[ F(y,s,p, M+\lambda
    e_N\otimes e_N) \leq -\bar \eta \lambda +\bar C(1+|p|+|M|) \quad \hbox{if  }\lambda >0,\] \[
        F(y,s,p, M+\lambda e_N\otimes e_N) \geq -\bar \eta \lambda -\bar C(1+|p|+|M|) \quad \hbox{if
        }\lambda <0.\] } \end{app}

\vspace*{-7mm}

\begin{remark}\label{rem:Q.lip} Let us come back on the local Lipschitz continuity both in $x$ and $t$ we
    impose in \hyp{Cont}, \cf also \eqref{cond:a31-p}. The reader may think that this requirement is
    not natural; one may just expect some continuity in $t$.  However, in order to use efficiently
    \hyp{NC} in the first-order case, we need the variable $t$ to be considered as a tangential
variable $x'$, thus imposing the same regularity on both---see the proof below. In the second-order
case, though the situation is different, we still use this common regularity for some technical
reason.  \end{remark}

We can now sum up the requirements on the equation in both the first and second-order case as well
as for the boundary condition for comparison results.

\newpage

\begin{app}{\hyp{Comp-1}}{Assumptions on $F,\G$ in the first-order case.}{\smsp The nonlinearities
    $F,\G$ satisfy Assumptions~\hyp{Gen}, \hyp{Cont}\footnote{which reduces to \eqref{cond:a31-p}
    for $F$.}, the normal coercivity assumption \hyp{NC} holds for $F$ and the quasiconvexity
    assumption \hyp{QC-G}\footnote{Or their equivalent properties in the non-flat case.}.} \end{app}

\begin{app}{\hyp{Comp-2}}{Assumptions on $F,\G$ in the second-order case.}{\smsp The nonlinearities
$F,\G$ satisfy Assumptions~\hyp{Gen}, \hyp{Cont} and the normal strong ellipticity assumption
\hyp{NSE} holds for $F$.} \end{app}

These assumptions on $\G$ mean that the associated ``flat boundary condition'' $\tilde G$ has to
satisfy first the standard second-order assumptions (ellipticity and Lipschitz continuity), but also
the Neumann or Ventcell-type boundary condition already mentionned in Section~\ref{GVbc-GD}.

%%%%%%%%%%%%%%%%%%%%%%%%%%%%%%%%%%%%%%%%%%%%%%%%%%%%%%%%%%%%%%%%%%%%%%%%%%%

%----------------------------------------------------------------------
\subsection{Global Comparison Results from Local Comparison Results} \label{sec:GCR.LCR}

In \cite{BCbook}, the proof of a ``global'' \SCR is reduced to the simpler proof of a ``local one'',
and even to the proof of a \SCR in a small ball.  Here we follow the same strategy since it is
well-adapted to problems with boundary conditions and, in order to emphasize the difference
``global--local'', we denote by \GCR a ``Global Comparison Result'' while a ``Local Comparison
Result'' is denoted by \LCR. Here are the definitions of these two types of results.

%\newpage

\begin{app}{\GCR}{Strong (global) Comparison Result for \eqref{Eqn}-\eqref{Vbc-GC}.}{\smsp If
    $u:\Omegb \times [0,T) \to \R$ is a bounded upper semicontinuous subsolution of
    \eqref{Eqn}-\eqref{Vbc-GC}, if $v:\Omegb \times [0,T) \to \R$ is a bounded lower semicontinuous
    supersolution of \eqref{Eqn}-\eqref{Vbc-GC} and if $u(x,0) \leq v(x,0)$ in $\Omegb$, then
$u(x,t) \leq v(x,t)$ in $\Omegb\times [0,T).$} \end{app}

\bigskip

In \cite{BCbook}, it is shown that, under suitable conditions, the proof of a \GCR can be reduced to
the proof of a \LCR. In order to give a precise definition of a \LCR, we introduce the notations \[
    \Qxtrh:=\{(y,s)\in \Omegb \times [0,T) : |y-x|<r,\ t-h<s<t\} ,\] \[ \partial_p
    \Qxtrh:=\{(y,s)\in \oQxtrh: |y-x|=r\}\cup \{(y,s)\in \oQxtrh: s=t-h\}.\]

\begin{app}{\LCR}{Local Comparison Result for \eqref{Eqn}-\eqref{Vbc-GC}.}{\smsp For any $(x,t) \in
    \Omegb \times (0,T)$, there exists $\bar r, \bar h>0$ such that\\ { if
    $u:\overline{Q_{x,t}^{\bar r,\bar h}} \to \R$ is a bounded upper semicontinuous subsolution of
    \eqref{Eqn}-\eqref{Vbc-GC} in $Q_{x,t}^{\bar r,\bar h}$,} \\ if $v:\overline{Q_{x,t}^{\bar
    r,\bar h}} \to \R$ is a bounded lower semicontinuous supersolution of \eqref{Eqn}-\eqref{Vbc-GC}
in $Q_{x,t}^{\bar r,\bar h}$,\\[2mm] then, for any $0<r\leq \bar r$ and $0<h\leq \bar h$, \[
\max_{\oQxtrh}(u-v)_+ \leq \max_{\partial_p \Qxtrh }(u-v)_+ .\] } \end{app}

Our result is the (notice that in the result below, of course \hyp{Gen}-$(ii)$ is automatically
satisfied if $F$ is a first-order Hamiltonian)

\begin{proposition}\label{lcr-scr} Assume that \hyp{$\Omega$} holds, and that both $F$ and $\G$
satisfy \hyp{Gen}. Then \LCR implies \GCR.
\end{proposition}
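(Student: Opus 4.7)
The plan is to argue by contradiction: assuming $M:=\sup_{\Omegb\times[0,T)}(u-v)>0$, I would build a suitably perturbed subsolution $\tilde u$ whose difference with $v$ has a strict positive maximum at some point $(x_0,t_0)$, and then apply \LCR on a small parabolic cylinder around $(x_0,t_0)$ to force the absurdity $M_\eta\le M_\eta-\gamma\min(r^2,h^2)$ for some positive $\gamma$.

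First, I would introduce the three-parameter perturbation
\[u_\eta(x,t):=u(x,t)-\frac{\eta}{T-t}+\eta\psi(d(x))-\eta\chi(x),\]
where $\psi:\R\to\R$ is smooth, bounded, increasing with $\psi(0)=0$, $\psi'(0)>0$, and small first/second derivatives, and $\chi$ is a nonnegative smooth function with bounded derivatives tending to $+\infty$ at infinity (only needed if $\Omega$ is unbounded; otherwise set $\chi\equiv 0$). The $-\eta/(T-t)$ piece provides a strict gap $\ge\eta/T^2$ in the equation inequality and pushes the relevant supremum away from $t=T$; the $-\eta\chi$ piece confines the max of $u_\eta-v$ to a bounded region; the $+\eta\psi(d)$ piece, via Lemma \ref{dist-func} combined with the structural form \eqref{eqn:formG} and \hyp{Gen}-$(iii)$, creates a strict gap of order $\bar c\eta\psi'(0)$ in the Ventcell boundary condition because adding $+\eta\psi(d)$ decreases the normal component of the gradient by $\eta\psi'(0)$. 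By \hyp{Gen}-$(i)$, the cross-contributions of $\psi(d)$ and $\chi$ to $F$ and $\G$ are $O(\eta)$ and controllable so that $u_\eta$ remains a strict subsolution of both \eqref{Eqn} and \eqref{Vbc-GC}. For $\eta$ small the initial inequality $u_\eta(\cdot,0)\le v(\cdot,0)$ is preserved and the upper semicontinuous function $u_\eta-v$ attains a positive supremum $M_\eta>0$ at some $(x_0,t_0)\in\Omegb\times(0,T)$ with $x_0$ in a bounded set.

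Next, to make $(x_0,t_0)$ a \emph{strict} local maximum, I would subtract a quadratic penalization and work with $\tilde u:=u_\eta-\gamma|x-x_0|^2-\gamma(t_0-t)^2$. A further application of \hyp{Gen}-$(i)$ shows that $\tilde u$ is still a subsolution of both the equation and the boundary condition on a neighborhood of $(x_0,t_0)$, provided $\gamma$ is small enough to be absorbed by the strict $\eta$-gaps of the previous step. By construction $(\tilde u-v)(x_0,t_0)=M_\eta$, while $(\tilde u-v)(x,t)\le M_\eta-\gamma(|x-x_0|^2+(t_0-t)^2)$ elsewhere. Picking $r\in(0,\bar r]$ and $h\in(0,\min(\bar h,t_0)]$ from \LCR applied at $(x_0,t_0)$, one has $\max_{\partial_p Q^{x_0,t_0}_{r,h}}(\tilde u-v)_+\le M_\eta-\gamma\min(r^2,h^2)$, while \LCR forces $M_\eta=(\tilde u-v)(x_0,t_0)\le\max_{\overline{Q^{x_0,t_0}_{r,h}}}(\tilde u-v)_+\le\max_{\partial_p Q^{x_0,t_0}_{r,h}}(\tilde u-v)_+$, the desired contradiction.

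The hard part will be verifying that $u_\eta$ indeed gains a \emph{strict} gap in the Ventcell boundary inequality---a non-obvious point, since the relaxed viscosity formulation at the boundary is only an ``either-or'' condition. This is precisely where the structural form \eqref{eqn:formG} of $\G$ and the monotonicity \hyp{Gen}-$(iii)$ of $\tilde G$ with respect to the normal direction of the gradient become crucial: they translate the $+\eta\psi(d)$ perturbation into a quantitative boundary gap $\bar c\eta\psi'(0)$ which cannot be washed out by the $O(\eta)$ Lipschitz perturbations coming from $\chi$ and the second-order contributions of $\psi(d)$, provided $\psi$ and $\chi$ have sufficiently small derivatives. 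The accompanying technical subtlety, routine but needing care, is the hierarchy of small parameters: first choose $\psi,\chi$ with small enough derivatives, then $\eta$ small enough to preserve the initial inequality and keep $M_\eta>0$, and finally $\gamma$ small relative to the strict $\eta$-gaps.
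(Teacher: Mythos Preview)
Your approach is correct and follows the same overall contradiction strategy as the paper, but the implementation differs in a few instructive ways.

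The paper localizes by subtracting $\alpha\chi(x,t)$ with $\chi(x,t)=(|x|^2+1)^{1/2}+k_1\varphi_K(d(x))+k_2t$, where $\varphi_K$ is \emph{decreasing} and $k_1,k_2$ are taken \emph{large}; this keeps $u_\alpha\le u$ (so the initial inequality is trivially preserved) and uses the largeness of $k_1$ against \hyp{Gen}-$(iii)$ to absorb all tangential perturbations on the boundary. You instead add $\eta\psi(d)$ with $\psi$ \emph{increasing} and take all derivatives \emph{small}; this works, but note that your claim ``the initial inequality $u_\eta(\cdot,0)\le v(\cdot,0)$ is preserved'' is not automatic, since $+\eta\psi(d)$ is nonnegative---you need something like $\|\psi\|_\infty\le 1/T$ so that the $-\eta/T$ term dominates. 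Also, keeping $\psi'(0)$ bounded below while making $\|\psi'\|_\infty,\|\psi''\|_\infty$ small enough for the interior equation forces a careful (though feasible) choice of scales: first fix $\psi'(0)$, then take $\|D\chi\|,\|D^2\chi\|$ small relative to $\bar c\,\psi'(0)$, and check that the combined $O(\eta)$ perturbations stay below $\eta/T^2$.

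For the final step, the paper uses the minimal-time trick together with a linear term $K(s-t)$ (and, in the boundary case, a local flattening followed by an extra $k\eta\varphi(x_N/\eta)$ term), whereas you use a single quadratic penalization $\gamma|x-x_0|^2+\gamma(t_0-t)^2$ absorbed by the strict $\eta$-gaps. Your route is slightly more direct and avoids the flattening, relying instead on the cancellation in Lemma~\ref{dist-func}; the paper's route avoids the delicate parameter hierarchy by decoupling ``large $k_1,k_2$'' from ``small $\alpha$''. Both are valid.
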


Before providing the proof of Proposition~\ref{lcr-scr}, we introduce a family of functions which
will be used in several places throughout this article, in particular to take care of the Ventcell
boundary condition: for $K>0$, we select a function $\varphi_K:[0,+\infty)\to\R$ satisfying
\begin{enumerate} 
    \item $\varphi_K\in C^2([0,+\infty),\R)$, decreasing; 
    \item $\varphi_K(0)=0,\
        \varphi_K'(0)=-1,\ \varphi_K''(0)=-K$; 
    \item $\varphi_K'$ has a compact support, more
        precisely $\mathrm{supp}(\varphi'_K)=[0,1]$; \item In particular, $\varphi_K$ is constant
            for $t\geq 1$ and therefore $\varphi_K$ is bounded.  
\end{enumerate}

\begin{proof} We slightly modify the arguments of \cite{BCbook} in order to take into account the
    Ventcell boundary condition. We denote by $u$ and $v$ the bounded sub and supersolution to be
    compared.

    We first have to localize problem and to do so, we introduce the function 
    \[\chi(x,t):=(|x|^2+1)^{1/2} + k_1
    \varphi(d(x)) +\frac{k_2}{T-t} ,\] where $\varphi=\varphi_1$ is defined just above, with $K=1$ 
    ($K$ is not going to play any role in this proof). Using
    \hyp{Gen}-$(i)$ and $(iii)$ together with Lemma~\ref{dist-func}, one easily shows that, by
    choosing $k_1$ large enough and then $k_2$ large enough, then $u_\alpha(x,t)=u(x,t)-\alpha
    \chi(x,t)$ is still a subsolution for \eqref{Eqn}-\eqref{Vbc-GC} for any $\alpha>0$ and
    $u_\alpha(x,t)\to -\infty$ when $|x|\to +\infty$ uniformly with respect to $t$.

    The aim is to show that $u_\alpha\leq v$ on $\Omegb \times [0,T)$ for any $\alpha>0$; indeed, if
    this is true, we obtain the \GCR by letting $\alpha$ tend to $0$.

    Because of the behavior of $u_\alpha$ at infinity, the maximum of $u_\alpha-v$ is achieved at
    some point $(x,t)$ and we can choose $t$ as the minimal time for which this maximum is achieved.
    Of course, we can assume without loss of generality that $t>0$, otherwise we are done, and then
    we face two cases: either $x\in \Omega$ or $x\in \domeg$.

    \noindent\textbf{1.} If $x\in \Omega$, the arguments of \cite{BCbook} apply: we argue in $\Qxtrh$ where $r,h$ are
    chosen small enough in order that the \LCR holds; we will also choose $h>0$ small compared to
    $r$, its size will be made precise later on. Notice that we can choose $r,h$ such that $\Qxtrh$
    does not intersect $\domeg \times (0,T)$ and $t-h\geq 0$.
    
    For $k_3>0$ large enough, $u_\alpha^\delta(y,s):=u_\alpha(y,s) -\delta(|y-x|^2 +k_3(s-t))$ is still
    a subsolution of \eqref{Eqn} and, if $0<h\ll r^2$, the function $|y-x|^2 + k_3(s-t)$ is strictly
    positive on the lateral boundary $\Lambda_{x,t}^{r,h}:=\{(y,s):|y-x|=r,\ t-h\leq s\leq t\}$; indeed 
    \begin{equation}\label{lat.ineq}
        |y-x|^2 + k_3(s-t)=r^2+k_3(s-t)\geq r^2-k_3h>0 \quad\hbox{provided  }h < r^2/k_3.
    \end{equation} 
    Next, for $s=t-h$, the maximum of $u_\alpha-v$ cannot be achieved by the minimality of
    $t$ and thus, by choosing $\delta$ small enough, we have 
    \[ u_\alpha^\delta (x,t)-v(x,t) > \max_{|y-x|\leq r}(u_\alpha^\delta (y,t-h) -v(y,t-h)). \]
    In other words, the maximum of $u^\delta_\alpha-v$ on $\partial_p Q_{x,t}^{r,h}$ is
    attained on the lateral boundary $\Lambda_{x,t}^{r,h}$ which is defined above.
    Applying the \LCR to
    $u_\alpha^\delta$ and $v$ and taking into account the above pieces of information, we have
    \[ 
    \begin{aligned} u_\alpha (x,t)-v(x,t) &=  u_\alpha^\delta (x,t)-v(x,t)\\ 
        & \leq  \max_{\partial_p \Qxtrh}(u_\alpha^\delta (y,s)-v(y,s)),\\ 
        & \leq  \max_{\Lambda_{x,t}^{r,h}}(u_\alpha^\delta (y,s)-v(y,s)),\\ 
        & < \max_{\Lambda_{x,t}^{r,h}}(u_\alpha(y,s) -v(y,s)),
    \end{aligned}
    \]
    which yields a clear contradiction with the definition of $(x,t)$,
    and completes the proof in this case.

    \noindent\textbf{2. }In the case when $x\in \domeg$, the advantage of reducing the proof to a \LCR, and therefore to
    a small ball around $x$, is that we can argue w.l.o.g. with a flat boundary, \ie in the case of
    \eqref{Vbc-HS}, \cf Lemma~\ref{V-flat}. Even if this requires a few additional arguments---in
    particular, the change of coordinates does not transform balls into balls---we trust the reader
    to be able to convince him/herself of this fact.
    
    With this reduction, this second case is treated analogously by adding an extra term to take care
    of the Ventcell condition, namely replacing the $\delta$-term by 
    \[ \delta\Big( |y-x|^2 + k\eta \varphi(x_N/\eta)+ k_3\big(s-t)\big) \Big)\,.\]
    Here $\varphi=\varphi_1$, \ie $K=1$ in the definition of $\varphi_K$ given above the
    proof, and the three parameters $\eta, k, k_3 >0$ have to be chosen in a such way that:
    $(i)$ $k$ is large enough ; $(ii)$ $k\eta$ is small enough ; $(iii)$ $k_3$ is large compared to $k/\eta$.
    
    Indeed, using the properties of $\varphi$, the derivative of the $\varphi$-term is $-k$ if $x_N=0$,
    \ie if $x\in \domeg$ and, by choosing  $k>0$ large enough, the $\delta$-term has a negative
    contribution in
    \eqref{Vbc-HS}, this property being true independently of $\eta$ and $k_3$.
    Then, we notice that the second order spatial derivatives of the $\delta$-term is of the order of
    $k/\eta$. So, if $k_3>0$ large enough compared to $k/\eta$, $u_\alpha^\delta$ is still a subsolution
    of \eqref{Eqn}-\eqref{Vbc-HS}.

    It remains to choose $\eta$. For $\eta>0$ small enough, $k\eta \varphi(x_N/\eta)=O(k\eta)$ is
    negative but small compared to $r^2$, which yields a contradiction on the lateral boundary
    $|y-x|=r$ as in \eqref{lat.ineq}. Next, on the boundary $s=t-h$, taking $\delta$ small enough gives the answer since,
    again, by the minimality of $t$, the maximum of $u_\alpha -v$ is strictly less than
    $u_\alpha^\delta (x,t)-v(x,t)$ for $s=t-h$. Again, the contradiction is obtained for $\delta>0$
    small enough, and the proof is complete.
     \end{proof}

%---------------------------------------------------------------------
\subsection{Local Properties of the Ventcell Boundary Condition}\label{lpvc}

    As the title indicates it, we investigate the {\em local} properties of the Ventcell boundary
    condition and therefore we may assume without loss of generality that $\Omega=\{x_N>0\}$ and
    that we are in the case of \eqref{Eqn}-\eqref{Vbc-HS}.
    
    The first result is rather classical. However it contains two important pieces of 
    information: on one hand, the result states that, under suitable condition, the sub and supersolutions
    properties can be extended up to the terminal time and, on the other hand, it proves the 
    ``regularity'' (in the sense of \cite{BCbook}) of sub and supersolutions in the case of Ventcell
    boundary conditions. In order to obtain the regularity result, we use either the normal coercivity
    of the nonlinearity in the case of a first-order equation or the normal strong ellipticity in
    the case of a second-order one.

Our result is the
\begin{proposition}\label{regul} 
    Assume that $F, G$ satisfy \hyp{Gen} and let $u$ be a bounded \usc subsolution of
    \eqref{Eqn}-\eqref{Vbc-HS} and $v$ be a bounded \lsc supersolution of
    \eqref{Eqn}-\eqref{Vbc-HS}.\\[2mm]
        $(i)$ For any $0<t<T$, $u$ and $v$ are respectively a sub and a supersolution of
            \eqref{Eqn}-\eqref{Vbc-HS} on $\Omega \times (0,t]$.\\[2mm]
        $(ii)$ Assume in addition that $F$ satisfies either \hyp{NC} or \hyp{NSE}.  Then the
            functions $u$ and $v$ are regular on
            $\domeg \times (0,T)$. More precisely, for any $(x,t) \in \domeg \times (0,T)$, 
            \[ u(x,t)=\limsup_{{\substack{ (y,s)\to (x,t)\\ (y,s)\in \Omega \times (0,t]}}} u(y,s)\;,\quad 
            v(x,t)=\liminf_{{\substack{ (y,s)\to (x,t)\\ (y,s)\in \Omega \times (0,t]}}} v(y,s). 
            \] 
\end{proposition}

The regularity property $(ii)$ is stronger than the usual one: not only does it mean that the value
of $u$ and $v$ on the boundary are, in some sense, the limit of their interior values (\ie there is
no artificial jump on the boundary) but these interior values also correspond to values of $u$ and $v$ at
previous times. And, of course, the same general result holds in general domains.

\begin{proof} We first prove $(i)$ and the arguments being similar in the sub and supersolution cases,
we just give them in the subsolution one.

We remind the reader that we have to show that the expected subsolution inequality has to be satisfied
if $(x,t)$ is a local strict maximum point of $u-\phi$ on $\Omegb \times (0,t]$, not on $\Omegb \times 
(0,T)$. We just treat the case when $x\in \domeg$, the case when $x\in \Omega$ being far easier (and
is a standard result in the viscosity literature). 

For $0<\e \ll 1$, we consider the function
$$ \Psi_\e (y,s):= u(y,s)-\phi(y,s)- \frac{[(s-t)^+]^2}{\e^2}.$$
Since $(x,t)$ is a local strict maximum point of $u-\phi$ on $\Omegb \times (0,t]$, this function has a local 
maximum point---relatively to $\Omegb \times 
(0,T)$---denoted by $(y_\e, s_\e)$, near $(x,t)$. Moreover, $(y_\e, s_\e)\to (x,t)$ when $\e \to 0$.

    The subsolution inequality holds at $(y_\e, s_\e)$, and only one additional term appears, in
    \eqref{Eqn}, and not in \eqref{Vbc-HS}, namely the time-derivative $$ \frac{2(s_\e-t)^+}{\e^2}\,.$$
    This term being positive, it can be dropped in the subsolution inequality and 
    the result then follows by letting $\e$ tend to $0$. Of course, we can also drop this term in
    the supersolution inequality since, in this case, it comes with a minus sign.

    Now we prove $(ii)$ and again only for the subsolution $u$.
    We assume by contradiction that there exists $(x,t) \in \domeg \times (0,T)$ such that
    \begin{equation}\label{u-nonreg} 
        u(x,t)>\limsup_{{\substack{ (y,s)\to (x,t)\\ (y,s)\in \Omega \times (0,t)}}} u(y,s), 
    \end{equation} 
    and the aim is to get a contradiction. 
    %    Again since this property is local, we can argue with a flat boundary, \ie in the case of
    %    \eqref{Vbc-HS}.
    
    To do so, for $0<\e\ll 1$, we introduce the function
    \[ \chi_\e (y,s) := u(y,s)-
    \frac{|y-x|^2}{\e^2}-\frac{|s-t|^2}{\e^2}- \e \varphi (L y_N) - \e \varphi(L^2(t-s))\,,\] 
    where $L$ is a positive constant to be chosen later on and $\varphi=\varphi_K$ with $K=1$, where
    the functions $\varphi_K$ are defined just after Proposition~\ref{lcr-scr}.

For $\e$ small enough and for any $L>0$, this function has a maximum point on $\Omegb 
\times (0,t]$ near $(x,t)$; we denote it by $(\xe,\te)$. If \eqref{u-nonreg} holds, then this maximum point is 
necessarily either on  the boundary $\domeg \times (0,t]$ or on $\Omega \times \{t\}$. Indeed, this follows from the fact that (i) $\chi_\e(x,t)=u(x,t)$, (ii) $\varphi$ being bounded, the two last terms are of order $\e$
and (iii) the two penalization terms $ \frac{|y-x|^2}{\e^2}-\frac{|s-t|^2}{\e^2}$ ensure that $(\xe,\te)$ is close to $(x,t)$.

Now we choose $L \gg \e^{-3}$ and we examine the two possible cases:\\[2mm]
    $\bullet$ If $(\xe,\te)\in \Omega \times \{t\}$, in the subsolution inequality we get that
    the derivative of the last term in $\chi_\e$ yields a positive contribution; more precisely,
    since $t_\e=t$, the contribution is $-L^2\e\varphi'(0)=L^2\e$.  

    On the other hand, the two first terms in $\chi_\e$ provide an $\e^{-2}$ contribution and the
    third one has a first derivative of order $L\e$, but is concave---hence the second derivative of this term
    has a positive contribution in the equation. Taking into account the
    properties of $F$, we conclude that the positive $L^2\e$ contribution dominates the equation
    provided $L^2\e\gg L\e$ and $L^2\e\gg \e^{-2}$. Therefore, if $L\gg\e^{-2}$ for instance, then the subsolution
    inequality cannot hold for $\e>0$ small enough, which provides a contradiction.\\[2mm]
$\bullet$ If $(\xe,\te)\in \domeg \times (0,t]$, the derivative of the $\e \varphi (L y_N)$-term gives a 
positive $L\e$ contribution which, under the assumptions on $G$, dominates the $\e^{-2}$ contribution of
the two first terms in $\chi_\e$ if $L \gg \e^{-3}$ and this provides again a contradiction.

Hence \eqref{u-nonreg} cannot hold and we have proved the desired regularity property.
\end{proof}

The next result concerns the boundary condition for second-order equations which satisfy hypothesis
\hyp{NSE}, \ie which are uniformly elliptic in the normal direction; in this case, the Ventcell
boundary condition holds in a strong sense.

\begin{proposition}\label{VBC-uec} Assume that $F,G$ satisfy \hyp{Gen} and that $F$ satisfies
    \hyp{NSE}. Then the Ventcell boundary condition is satisfied in a ``strong sense'' for both
    subsolutions and supersolutions of \eqref{Eqn}-\eqref{Vbc-HS}. More precisely,\\[2mm] $(i)$ if
    $u$ is an \usc subsolution of \eqref{Eqn}-\eqref{Vbc-HS} and $(x,t) \in \domeg \times (0,T)$ is
    a local maximum point of $u-\phi$, where $\phi$ is a smooth test-function then \[
        -\frac{\partial \phi}{\partial x_N} (x,t) + G(x,t,\Dxp  \phi(x,t) , \Ddxp   \phi(x,t)
        )\leq 0\,.\] \noindent $(ii)$ if $v$ is a \lsc  supersolution of \eqref{Eqn}-\eqref{Vbc-HS}
    and $(x,t) \in \domeg \times (0,T)$ is a local minimum point of $v-\phi$, where $\phi$ is a
smooth test-function then \[ -\frac{\partial \phi}{\partial x_N} (x,t) + G(x,t,\Dxp  \phi(x,t) ,
\Ddxp  \phi(x,t) )\geq 0\,.\] \end{proposition}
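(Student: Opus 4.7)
The plan is to prove (i) by contradiction and obtain (ii) by a symmetric perturbation. Suppose, towards contradiction, that $\delta:=-\partial_{x_N}\phi(x,t)+G(x,t,D_{x'}\phi(x,t),D^2_{x'x'}\phi(x,t))>0$. The strategy I would adopt is to modify $\phi$ by a $y_N$-dependent perturbation that preserves the local maximum of $u-\phi$ at $(x,t)$, leaves the tangential data (hence the $G$-term) essentially unchanged, but deforms the normal Hessian in a way that allows \hyp{NSE} to force the equation part of the subsolution inequality to also become strictly positive. Then both entries of the $\min$ in the viscosity subsolution inequality are positive, contradicting the subsolution property.

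Concretely, I would pick a $C^2$ function $\psi_K\colon[0,\infty)\to\R$ satisfying $\psi_K(0)=0$, $\psi_K'(0)=1$, $\psi_K''(0)=-K$, and $\psi_K\ge 0$ on $[0,c_K]$ for some $c_K>0$ of order $1/K$; for instance, $\psi_K(y_N)=y_N-\tfrac{K}{2}y_N^2$ near $0$, extended smoothly and boundedly. For $L>0$ to be fixed below, set $\tilde\phi(y,s):=\phi(y,s)+L\psi_K(y_N)$. Since $L\psi_K\ge 0$ on $[0,c_K]$ with equality at $y_N=0$, we have $\tilde\phi\ge\phi$ on a thin slab around $(x,t)$, with equality at $(x,t)$ itself; thus $(x,t)$ remains a local maximum of $u-\tilde\phi$ on that neighborhood and the viscosity subsolution inequality applies to $\tilde\phi$.

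Computing $D\tilde\phi(x,t)=D\phi(x,t)+Le_N$, $D^2\tilde\phi(x,t)=D^2\phi(x,t)-LK\,e_N\otimes e_N$, while $D_{x'}\tilde\phi=D_{x'}\phi$ and $D^2_{x'x'}\tilde\phi=D^2_{x'x'}\phi$, the subsolution inequality then reads
\[
\min\!\Big\{\phi_t(x,t)+F\bigl(x,t,D\phi+Le_N,D^2\phi-LK\,e_N\otimes e_N\bigr),\ \delta-L\Big\}\le 0.
\]
I choose $L:=\delta/2$, which makes the second entry equal $\delta/2>0$. For the first entry, \hyp{NSE} with $\lambda=-LK<0$ gives
\[
F\bigl(x,t,D\phi+Le_N,D^2\phi-LK\,e_N\otimes e_N\bigr)\ \ge\ \bar\eta\,LK-\bar C\bigl(1+|D\phi|+L+\|D^2\phi\|\bigr),
\]
and with $L$ fixed, taking $K$ large forces the first entry above $|\phi_t|+1$, hence strictly positive. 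Both entries being positive contradicts the $\min\le 0$ inequality, so $\delta\le 0$: this is the strong subsolution inequality in (i).

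The supersolution case (ii) is handled by the symmetric perturbation $\tilde\phi=\phi-L\psi_K(y_N)$, which is $\le\phi$ near $(x,t)$ so that $(x,t)$ remains a local minimum of $v-\tilde\phi$; the normal Hessian becomes $D^2\phi+LK\,e_N\otimes e_N$, and \hyp{NSE} with $\lambda=+LK>0$ drives $F$ arbitrarily negative for $K$ large, while $-\partial_{x_N}\tilde\phi+G=(-\partial_{x_N}\phi+G)+L=\delta'+L$ with $\delta'<0$ remains negative for $L<|\delta'|$; both entries of the $\max$ are then negative, contradicting the supersolution inequality. The delicate design point, and the main obstacle, is constructing $\psi_K$ with $\psi_K\ge 0$ near $0$ and simultaneously $\psi_K''(0)<0$---these two requirements force $\psi_K'(0)>0$, which shifts the boundary entry by $-L$ (resp.\ $+L$), making it essential to choose $L$ small compared to $|\delta|$ (resp.\ $|\delta'|$) before sending $K\to\infty$ to engage \hyp{NSE}.
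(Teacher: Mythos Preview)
Your proof is correct and follows essentially the same approach as the paper's. The paper perturbs the test-function by $\delta(y_N-x_N)-L(y_N-x_N)^2$, sends $L\to\infty$ to rule out the $F$-inequality via \hyp{NSE}, and then lets $\delta\to0$; your perturbation $L\psi_K(y_N)=L\,y_N-\tfrac{LK}{2}y_N^2$ is the same object with the roles of the parameters relabeled, and your contradiction framing (fix $L=\delta/2$, then send $K\to\infty$) is just a cosmetic repackaging of the same two-parameter limit.
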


\begin{proof} We sketch the proof for the subsolution case, the supersolution one being analogous.

    If $(x,t) \in \domeg \times (0,T)$ is a local maximum point of $u-\phi$, it is also a local
    maximum point of the function \[ (y,s) \mapsto u(y,s)-\phi(y,s)- \delta y_N+Ly_N^2
    ,\] for any $\delta,L>0$. Of course, the ``locality'' in this property depends on $\delta$ and
    $L$.  The second-derivative of the new test-function at $(x,t)$ is now \[ D^2 \phi(x,t)
    -2Le_N\otimes e_N  ,\] and, using \hyp{NSE}, it is clear that, for $L$ large enough, the
    $F$-inequality cannot hold and therefore \[ -\frac{\partial \phi}{\partial x_N} (x,t) -\delta +
    G(x,t,\Dxp  \phi(x,t) , \Ddxp   \phi(x,t) )\leq 0.\] Letting $\delta$ tend to $0$ gives the
result.  \end{proof}

%-----------------------------------------------
\subsection{About the initial condition}

A last property concerns the initial data and more precisely the points of $\domeg \times \{0\}$.
If \eqref{Eqn}-\eqref{Vbc-HS} is associated to the initial data \begin{equation}\label{init}
u(x,0)=u_0(x) \quad \hbox{on  }\Omegb  , \end{equation} where $u_0 \in C(\Omegb)$, then a priori we
have to use ``initial data in the viscosity solutions sense'' in the same way as we have ``boundary
conditions in the viscosity solutions sense''. This is the requirement to be able to apply the
half-relaxed limit method in its full powerness. {By standard methods, one can prove that,} if $u$
is a subsolution of \eqref{Eqn}-\eqref{Vbc-HS}-\eqref{init} and $v$ is a supersolution of
\eqref{Eqn}-\eqref{Vbc-HS}-\eqref{init}, we have \begin{equation}\label{ineg-init} u(x,0)\leq u_0(x)
\leq v(x,0)\quad \hbox{for any  }x\in \Omega.  \end{equation} But we have to show that this
inequality still holds if $x\in \domeg$, which is the aim of the
\begin{proposition}\label{prop-init} Assume that $F,\G$ satisfy \hyp{Gen} and that $u_0 \in
C(\Omegb)$.  Then \eqref{ineg-init} holds for any $x\in \Omegb$.  \end{proposition}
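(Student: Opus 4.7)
I focus on the subsolution inequality $u(x,0) \leq u_0(x)$ for $x \in \domeg$; the supersolution case is entirely analogous. Using the local flattening of Section~\ref{RFB} together with Lemma~\ref{V-flat}, one reduces to the flat situation $\Omega = \{y_N > 0\}$ near a fixed $x_0 \in \domeg$ (taken as $0$) with boundary condition of the form \eqref{Vbc-HS}. The plan is a contradiction argument: assume $u(0,0) \geq u_0(0) + \delta$ for some $\delta > 0$ and construct a test function whose derivatives strictly violate \emph{both} the equation and the Ventcell boundary condition, thereby reducing the relaxed viscosity subsolution inequality to its initial-condition component, which then conflicts with \eqref{ineg-init} and the continuity of $u_0$.

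For small $\eps > 0$, $\alpha > 2$ fixed, and sufficiently large constants $L_0, C_0 > 0$ depending only on the Lipschitz constants of $F$ and $G$ (from \hyp{Gen}-$(i)$), I propose the test function
\[
\phi_\eps(y, s) := u_0(0) + \tfrac{\delta}{2} + \frac{|y|^2}{\eps^2} + \frac{C_0\, s}{\eps^{\alpha+2}} + L_0\,\eps^{\alpha - 2}\,\varphi_K\!\Bigl(\frac{y_N}{\eps^\alpha}\Bigr),
 \]
with $\varphi_K$ the bounded function introduced after Proposition~\ref{lcr-scr}. Since $(u - \phi_\eps)(0,0) = \delta/2 > 0$ and the amplitude $L_0 \eps^{\alpha-2} \to 0$, any maximum point $(y_\eps, s_\eps)$ of $u - \phi_\eps$ on a small cylinder $\overline{B(0, r) \cap \Omega} \times [0, r]$ satisfies $y_\eps \to 0$, $s_\eps \to 0$ and lies in the relative interior of the cylinder. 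A direct computation then gives $\partial_s \phi_\eps = C_0/\eps^{\alpha+2}$ and $-\partial_{y_N} \phi_\eps\vert_{y_N = 0} = L_0/\eps^2$, while $|F(\cdot, D\phi_\eps, D^2\phi_\eps)| = O(1/\eps^{\alpha+2})$ and $|G(\cdot, D_{y'}\phi_\eps, D^2_{y'y'}\phi_\eps)| = O(1/\eps^2)$; for $C_0, L_0$ large enough, the strict inequalities $\partial_s\phi_\eps + F > 0$ and $-\partial_{y_N}\phi_\eps + G > 0$ therefore hold at $(y_\eps, s_\eps)$. A four-case analysis then yields the contradiction: interior ($s_\eps > 0, (y_\eps)_N > 0$) and lateral-boundary ($s_\eps > 0, (y_\eps)_N = 0$) maxima directly contradict the equation and the relaxed boundary-or-equation condition respectively; maxima with $s_\eps = 0$ and $(y_\eps)_N > 0$ use the known interior inequality \eqref{ineg-init}, together with $y_\eps \to 0$ and continuity of $u_0$, to yield $(u - \phi_\eps)(y_\eps, 0) \to -\delta/2$, contradicting the lower bound $\delta/2$; finally the corner case $s_\eps = 0, (y_\eps)_N = 0$ is handled by the fully relaxed initial-boundary subsolution condition $\min(\partial_s\phi_\eps + F,\ -\partial_{y_N}\phi_\eps + G,\ u - u_0) \leq 0$, whose first two components are strictly positive, so the third forces $u(y_\eps, 0) \leq u_0(y_\eps)$ and the same continuity argument concludes.

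The main technical point, and likely the only delicate one, is the two-scale design of the boundary-handling term $L_0\eps^{\alpha-2}\varphi_K(y_N/\eps^\alpha)$: its normal-derivative amplitude $L_0/\eps^2$ must dominate the $O(1/\eps^2)$ bound on $G$ in order to violate the Ventcell inequality, while its overall amplitude $L_0\eps^{\alpha-2}$ must vanish as $\eps \to 0$ in order to preserve the spatial localization $y_\eps \to 0$, on which the continuity-of-$u_0$ argument relies. The choice $\alpha > 2$ is exactly what reconciles these two opposing requirements, and is the key reason why this argument does not need the normal-direction hypotheses \hyp{NC} or \hyp{NSE}---the time derivative $\partial_t$ plays, near $t=0$, the coercive role that \hyp{NC}/\hyp{NSE} play for positive times in Proposition~\ref{regul}.
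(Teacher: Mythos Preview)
Your proof is correct and follows essentially the same route as the paper: a penalization argument with a two-scale barrier term $\eps^{\beta}\varphi(y_N/\eps^{\gamma})$ whose amplitude vanishes but whose normal derivative blows up faster than the $O(\eps^{-2})$ tangential second derivatives, together with a large time coefficient to rule out the $F$-inequality. The paper uses the specific choice $\eps\,\varphi_1(y_N/\eps^4)$ with $K_1\sim\eps^{-8}$ in place of your family $L_0\eps^{\alpha-2}\varphi_K(y_N/\eps^\alpha)$ with $C_0/\eps^{\alpha+2}$, and compresses your four-case analysis into the single claim ``$t_\eps=0$ and $u(x_\eps,0)\leq u_0(x_\eps)$''; the only minor refinement worth noting is that $C_0$ must be chosen \emph{after} $L_0$ and $K$, since the bound on $F$ involves $D^2_{y_Ny_N}\phi_\eps=O(KL_0\eps^{-\alpha-2})$.
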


\begin{proof} We only prove the result for a subsolution $u$, the proof for a supersolution being
    analogous.  And of course, we consider a point $x\in \domeg$ for which we want to show that
    $u(x,0)\leq u_0(x)$.

    For $\e$ small enough and for some large enough constant $K_1>0$ to be chosen later on, we
    consider the function \[ (y,t) \mapsto u(y,t)-\frac{|y-x|^2}{\eps^2}-K_1t -
    \eps\varphi(\frac{y_N}{\eps^4}) ,\] in the compact set $(\overline{B(x,1)}\cap \Omegb) \times
    [0,T]$ where $\varphi=\varphi_1$ defined at the beginning of Section~\ref{sec:GCR.LCR}. This
    function achieves its maximum at $(x_\eps,t_\eps)$ and, using that the $\eps\varphi$-term tends
    to $0$, classical arguments allow to show that \[ \frac{|x_\eps-x|^2}{\eps^2}\to 0\quad\hbox{as
    }\eps \to 0.\] In particular, for $\e$ small enough, $x_\eps\in B(x,1)\cap \Omegb$---it is not
    on the boundary of the ball---and we can write down viscosity subsolution inequalities. We claim
that, for $\e$ small enough and for $K_1>0$ large enough,  we have necessarily $t_\eps=0$ and
$u(x_\eps,0)\leq u_0(x_\eps)$. Indeed \begin{enumerate} \item[$(i)$] On one hand, if $\e$ is small
            enough, the Ventcell boundary condition cannot hold since the $\eps\varphi$-term has a
        derivative which is $+\e^{-3}$ while all the $x'$-derivatives at at most of order $\e^{-2}$.
    \item[$(ii)$] On the other hand, if $K_1$ is large enough (of order, say, $\eps^{-8}$), the
        equation cannot hold either.  \end{enumerate} Hence only the inequality associated to the
initial data can hold, proving our claim. To conclude, it suffices to recall that $u_0$ is
continuous and $u(x_\eps,0)\to u(x,0)$ invoking again classical arguments.  \end{proof}

Again, in the result above, of \hyp{Gen}-$(ii)$ is automatically satisfied if $F$ is a first-order
Hamiltonian.

%----------------------------------------------------------
\subsection{Reduced parabolic sub- and superjets}
\label{sec:jets}

Here for simplicity of notations we denote by $\Qbl:=\bar\Omega\times(0,T)$ and
$\pQbl:=\partial\Omega\times(0,T)$.  We introduce a \textit{reduced} version of the parabolic
variants of the second-order sub- and superjets given in the ``User's guide'' of Crandall, Ishii and
Lions~\cite{Users}. Indeed, we are just interested here in Problem~\eqref{Eqn}-\eqref{Vbc-HS} for
which we have to give a weak sense to $u_t, Du$ and $\Ddxp  u$ only. In all the following, the
subscript ``$r$'' refers to ``reduced''.

\medskip

\noindent\textbf{Reduced semijets ---} The reduced parabolic superjet $\PP^{2,+}_r u(x,t)$ of an \usc
function $u:\Qbl\to\R$ at a point $(x,t)\in\Qbl$ is defined as the set of all
$(p_x,p_t,M)\in\R^N\times\R\times \mathcal{S}^{N-1}$ such that
\[ 
    \begin{aligned}
        u(y,s)\leq u(x,t) &+ p_x\cdot(y-x)+p_t(s-t) + \frac12(M(y'-x'),y'-x') \\
        &+ o(|y'-x'|^2+ |y_N-x_N|+|s-t|)\,.
        \end{aligned}
\]
Similarly, the reduced parabolic subjet $\PP^{2,-}_r v (x,t)$ of a \lsc function $v$ is given by
\[ \PP^{2,-}_r v (x,t)=-\PP^{2,+}_r (-v)(x,t)\,. \]

In several places, we even go a step further by considering only the $(p_{x_N},M)$ components
when only those terms play a role in the equation, related to the $x_N$-derivative and second derivatives
\wrt $x'$. In other words, the couples corresponding to $(D_{x_N},\Ddxp  )$. We still keep the
notation $\PP^{2,+}_r,\PP^{2,-}_r$ in this case.

\medskip

\noindent\textbf{On the structure of semijets ---} The following result is a slightly weaker
adaptation of \cite[Prop. 2.10]{BCbook} for reduced second-order parabolic superjets.
\begin{proposition}\label{prop:jets}
    Assume that $\Omega$ is given by \eqref{Om-HS}, that \hyp{Comp-1} holds and let $u$ be a subsolution of
    Problem~\eqref{Eqn}-\eqref{Vbc-HS}. If $(p_x,p_t,M)\in \PP^{2,+}_r u(\xb,\tb)$ at a point $(\xb,\tb)\in\pQbl$,
    then
    \begin{equation}
        \Lambda^+(u):=\big\{\lambda\in\R: (p_x+\lambda e_N,p_t,M)\in \PP^{2,+}_r u(\xb,\tb)\big\},
    \end{equation}
is an interval, either of the form $(\lambda_{min},+\infty)$ or $[\lambda_{min},+\infty)$ with $\lambda_{min}\in \R$.
    Moreover, if $\underline\lambda$ is the maximal solution of
    \begin{equation}\label{eq:prop-u-lambda}
     p_t+F(\xb,\tb, p_x +\underline \lambda e_N)=0\,,
    \end{equation}
    then $(\underline\lambda,+\infty)\subset\Lambda^+(u)$.
\end{proposition}
\begin{proof}
    It is clear that the set $\Lambda^+(u)$ is convex and nonempty since $\lambda=0$ belongs to
    $\Lambda^+(u)$. Moreover, if $\lambda \in \Lambda^+(u)$ and if $\lambda' \geq \lambda$, it is
    clear from the definition that $\lambda' \in \Lambda^+(u)$ since $(\xb,\tb)\in\pQbl$. Hence
    $\Lambda^+(u)$ is an interval which has one of the two announced forms.  The main difference
    with the case of first-order sub- and superdifferentials is that $\PP^{2,+}_r u(\xb,\tb)$---and
    therefore $\Lambda^+(u)$---may not be closed\footnote{Take $u(x):=x_N^{3/2}$ then it is easy to
    check that $\Lambda^+(u)=(0,+\infty)$ for any point $(\xb,\tb)\in\pQbl$ and that $\lambda =0$ is
    not associated to any point in $\PP^{2,+}_r u(\xb,\tb)$.}.
    
    Moreover, if $(p_x+\lambda e_N,p_t,M)\in \PP^{2,+}_r u(\xb,\tb)$ and if we write $p_x=(p',p_N)$ with $p'\in \R^{N-1}$ and $p_N\in \R$, the subsolution inequality reads
    $$ \min(p_t+F(\xb,\tb, p_x + \lambda e_N), -\lambda - p_N+ G(\xb',\tb, p', M))\leq 0,$$
which clearly implies that $\lambda$ is bounded from below and therefore $\lambda_{min}\in \R$.

    Since $(p_x,p_t,M)\in \PP^{2,+}_r u(\xb,\tb)$, there exists a test-function $\varphi\in
    C^2(\Qbl)$ such that $u-\varphi$ reaches a maximum at $(\xb,\tb)$ and such that
    $$\varphi_t (\xb,\tb)=p_t, \;D_x\varphi(\xb,\tb)=p_x, \;\Ddxp  \varphi
    (\xb,\tb)=M.$$
    Moreover, we can assume that this maximum is strict, by standard arguments. 

    Then, for $\lambda>\underline{\lambda}$ and $0<\beta\ll 1$, we consider the function
    \[ \zeta_\beta (x,t)=u(x,t)-\varphi(x,t) -\lambda x_N -\frac{x_N^2}{\beta}\,. \]
    Since $(\xb,\tb)$ is a strict local maximum point of $u-\varphi$, there exists a sequence 
    $(x_\beta, t_\beta)\in\Qbl$ of local maximum points of $\zeta_\beta$ which converges to $(\xb,\tb)$.

    From now on, we drop the indices $\beta$ for the sake of notational simplicity and just denote $(x_\beta, t_\beta)$ by $(x,t)$. If $x_N>0$, then
    we would have 
    \[ \varphi_t (x,t)+F\big(x,t, D_x\varphi(x,t) +(\lambda+ \frac{2x_N}{\beta}) e_N\big)
    \leq 0.\]
    But, using \hyp{NC}, the term $(\lambda+ \frac{2x_N}{\beta})$ remains bounded and therefore by
    continuity, this would mean
    \[  p_t +F\big(\xb,\tb, p_x +(\lambda+ \frac{2x_N}{\beta}) e_N\big)+o_\beta(1) \leq 0\,. \]
    And we reach a contradiction for $\beta$ small enough since, by definition of
    $\underline\lambda$, for any $\mu\geq \lambda >\underline\lambda$, we have,
    \[ p_t +F\big(\xb,\tb, D_x\varphi(\xb,\tb) + \mu e_N)\geq \eta(\lambda)>0\,. \]
    Hence $x_N=0$ and therefore $(x,t)=(\xb,\tb)$ by the strict maximum point property. We deduce
    that  
    \[ (D_x\varphi(\xb,\tb)+\lambda e_N, \varphi_t (\xb,\tb), \Ddxp  \varphi
    (\xb,\tb))=(p_x+\lambda e_N,p_t,M)\in \PP^{2,+}_ru(\xb,\tb)\,. \]
    Hence $(\lambda,+\infty)\subset \Lambda^+(u)$ and the result follows.
\end{proof}

Of course this result can be translated into a similar one for the reduced subjet of a supersolution
$v$, through the formula $\PP^{2,-}_rv=-\PP^{2,+}_r(-v)$.

\medskip

\noindent\textbf{Closures of semijets ---}
In order to apply then Jensen-Ishii Lemma, as in the User's guide of Crandall, Ishii and
Lions~\cite{Users}, we define the sets $\bar \PP^{2,+}_r \bar u^\alpha  (x,t)$ and $\bar \PP^{2,-}_r \bar
v_\alpha (y,s)$ by the following way: we say that $(\lambda,X) \in \bar \PP^{2,+}_r \bar u^\alpha (x,t)$ if and only if
there exists a sequence $(x_k,t_k,\lambda_k,X_k)_k$ converging to $(x,t,\lambda,X)$ such that, for all $k\in\N$, $(\lambda_k,X_k)\in \PP^{2,+}_r \bar u^\alpha  (x_k,t_k)$.

The definition of $\bar \PP^{2,-} \bar v_\alpha (y,s)$ is analogous, replacing $\PP^{2,+} \bar
u^\alpha (x_k,t_k)$ by $\PP^{2,-} \bar v^\alpha  (x_k,t_k)$.

%%%%%%%%%%%%%%%%%%%%%%%%%%%%%%%%%%%%%%%%%%%%%%%%%%%%%%%%%%%%%%%%%%%%%%%
\section{Statement of the Main Comparison Results}\label{statements}

We begin with a result in the half-space case since it is, in fact, the main result.
\begin{theorem}\label{comp:HS} Assume that $\Omega$ is given by \eqref{Om-HS}, that either
    \hyp{Comp-1} or \hyp{Comp-2} holds. Then the \LCR holds for Problem~\eqref{Eqn}-\eqref{Vbc-HS},
    hence the \GCR also holds.  \end{theorem}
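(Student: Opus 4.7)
The plan is to follow the scheme laid out at the end of the introduction and work entirely through the local comparison result, since Proposition~\ref{lcr-scr} already reduces the \GCR to a \LCR. Fix $(\bar x, \bar t) \in \domeg \times (0,T)$; if the \LCR were to fail in every shrinking cylinder $Q^{\bar x,\bar t}_{r,h}$, I could pass to a pair of putative sub/supersolutions $u,v$ for which $\max(u-v)$ on $\overline Q$ exceeds its parabolic boundary value. When this maximum is attained in $\Omega \cap Q$, the standard interior viscosity comparison (via \hyp{Gen} and \hyp{Cont}) yields a direct contradiction, so I may assume throughout that the maximum lies on $\domeg$.

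The first step would be an \emph{almost classical doubling of variables}, used only as preparation for what follows. I would look at the penalization
\[
\Psi_\eps(x,t,y,s) = u(x,t) - v(y,s) - \frac{|x-y|^2 + |t-s|^2}{\eps^2} - \chi_\mathrm{loc}(x,t,y,s),
\]
augmented by a boundary bump of the form $\eta\varphi_K(x_N) + \eta\varphi_K(y_N)$ built from the family defined after Proposition~\ref{lcr-scr}. In view of Proposition~\ref{regul} and (in the second-order case) Proposition~\ref{VBC-uec}, this forces the maximum point $(x_\eps, t_\eps, y_\eps, s_\eps)$ to lie on $\domeg \times \domeg$; otherwise the relaxed boundary condition can be replaced by a strong one and interior comparison closes the argument. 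Along the way I would record the standard estimates $|x_\eps-y_\eps|^2/\eps^2 + |t_\eps-s_\eps|^2/\eps^2 \to 0$ together with bounds on the penalization gradients, which are exactly the data needed to start the blow-up.

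The second step is the \emph{twin blow-up}, adapted to the Ventcell structure. Since \eqref{Vbc-HS} couples $\partial_{x_N}u$ with $D^2_{x'x'}u$, I would use an anisotropic scaling: tangential scale $\mu$ for $(x',t)$ and normal scale $\mu^2$ for $x_N$, so that $\mu^{-2}$ becomes the common homogeneity level of $\partial_{x_N}$ and $D^2_{x'x'}$. Setting
\[
U_\mu(\xi',\tau,\zeta) = \frac{u(x_\eps + \mu\xi',\, t_\eps + \mu^2\tau,\, \mu^2 \zeta) - \ell(\xi',\tau,\zeta)}{\mu^2},
\]
and analogously $V_\mu$ centred at $(y_\eps,s_\eps)$, with $\ell$ an affine normalisation built from the derivatives of the penalization, both profiles are defined on $\{\zeta \geq 0\}$ and solve rescaled sub/supersolution conditions for frozen Hamiltonians $F_0,G_0$ under which \hyp{NC}/\hyp{NSE} survive. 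Remark~\ref{rem:Q.lip} explains why I want to treat $t$ tangentially on the same footing as $x'$. The crucial point is that I also perform the blow-up in the doubling information itself, so that the maximum-point property translates into a quantitative inequality between the sub- and superdifferentials of $U_\mu, V_\mu$ at the origin.

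The third and final step is to pass to the limit $\mu\to 0$ (along a subsequence, with $\eps$ coupled appropriately to $\mu$), obtaining limit profiles $U,V$ on $\{\zeta \geq 0\}$ and a corresponding differential inequality at the common base point. Here the proof would bifurcate: under \hyp{Comp-1}, the problem is first-order and I would follow \cite{LiSo1,LiSo2}, using that normal coercivity \hyp{NC} selects the normal component of every admissible sub/superdifferential, so that the Lions-Souganidis subdifferential comparison yields the contradiction; under \hyp{Comp-2}, the Jensen-Ishii lemma has to be invoked in its full matrix form in the blow-up, with \hyp{NSE} providing the ellipticity in the normal direction needed to force the boundary condition to be the active one and, combined with the form of \hyp{Cont}, to close a matrix-inequality argument directly on the blown-up profiles. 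I expect this last step to be the main obstacle: contrary to the Forcadel-Imbert-Monneau setting, the blow-up does not collapse to a one-dimensional problem because the tangential Hessian persists in the limit Ventcell equation, so the closing argument must simultaneously control tangential second derivatives and a normal first derivative coming from two different blown-up points, which is precisely what \hyp{NC} and \hyp{NSE} have been tailored to handle.
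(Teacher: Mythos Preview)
Your high-level architecture---doubling, anisotropic twin blow-up, then a Lions--Souganidis/Jensen--Ishii closing---matches the paper's scheme, but two concrete choices in your proposal would make the argument break down.

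\textbf{The normal penalization.} You double all variables isotropically via $|x-y|^2/\eps^2$ and try to force $\bar x_N=\bar y_N=0$ with small boundary bumps $\eta\varphi_K(x_N)$. This does not work, and more importantly it is incompatible with the blow-up. The paper separates tangential and normal penalizations: it uses $|x'-y'|^2/\eps^2+|t-s|^2/\eps^2$ tangentially but $L|x_N-y_N|$ (first order) or $L\varphi(|x_N-y_N|/\eps)$ (second order) normally, with $L$ \emph{large}. The reason is scaling: under $x'\mapsto\delta x'$, $x_N\mapsto\delta^2 x_N$ and division by $\delta^2$, a quadratic normal term $|x_N-y_N|^2/\eps^2$ becomes $\delta^2|x_N-y_N|^2/\eps^2\to 0$ and all normal control is lost in the limit, whereas $L|x_N-y_N|$ survives unchanged and yields the key inequality \eqref{ineq-ud-vd}/\eqref{keyineq}. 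The large $L$ is also what actually forces $\bar x_N=\bar y_N=0$: in the first-order case one shows $\bar x_N\neq\bar y_N$ is impossible by \hyp{NC} when $L\sim\eps^{-2}$, and the remaining case $\bar x_N=\bar y_N>0$ is ruled out by a separate regularization (first order) or a delicate $\bar\alpha$-homotopy in the $\varphi$-term (second order, Lemma~\ref{lem:pen.eps}). Propositions~\ref{regul} and~\ref{VBC-uec} do not do this job.

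\textbf{The closing step.} You correctly flag this as the main obstacle, but the paper's mechanism is more specific than ``Lions--Souganidis'' or ``Jensen--Ishii in full matrix form''. After the blow-up the paper performs a \emph{tangential} sup/inf-convolution on $\bar u,\bar v$, which makes them Lipschitz and semi-convex/concave in $(x',t)$ while preserving the frozen viscosity inequalities. In the first-order case, the homogeneity $w((\mu x',\mu^2 x_N),\mu t)=\mu^2 w(x,t)$ inherited from the blow-up is then exploited to analyze the normal slopes $\underline\lambda_1\le\overline\lambda_1$ and $\underline\lambda_2\le\overline\lambda_2$ via the auxiliary functions $w^\delta(x_N)$; the contradiction comes from comparing these intervals. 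In the second-order case, the limit equations are $-\bar u_{x_Nx_N}\le 0\le -\bar v_{x_Nx_N}$, so $\bar u$ is convex and $\bar v$ concave in $x_N$; this convexity/concavity, combined with a tangential Jensen--Ishii application, is what closes the argument. None of this structure appears in your outline, and without it the final step is not a proof. (Incidentally, your blow-up formula has $t_\eps+\mu^2\tau$; the paper uses $\bar t+\delta t$, i.e.\ time is tangential with scale $\delta$, consistent with what you say in words.)
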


Because of the form of Assumption~\hyp{NC} or \hyp{NSE}, this result is twofold: indeed, the cases
of first-order equations and of second-order equations are rather different, even if their
proofs---given respectively in Sections~\ref{case1} and~\ref{case2}---contain common features.

As we pointed out above, Assumption~\hyp{Cont}---which is essential in \hyp{Comp-1} and
\hyp{Comp-2}---is nothing but the classical \eqref{cond:a31-p} in the first-order case and, in the
second-order one, since Theorem~\ref{comp:HS} deals with a flat boundary, we can drop the $Q$-term
in this assumption (or assume, equivalently, that it holds only for $Q\equiv0$).

The case of general domains is just a corollary of Theorem~\ref{comp:HS} because of
Proposition~\ref{lcr-scr}: indeed the fact that a \GCR reduces to a \LCR allows a local flattening
of the boundary, therefore to recover the half-space case.

We formulate anyway the result.  \begin{theorem}\label{comp:GC} Assume that \hyp{$\Omega$} holds,
that either \hyp{Comp-1} or \hyp{Comp-2} holds. Then the \LCR holds for
Problem~\eqref{Eqn}-\eqref{Vbc-GC} hence the \GCR also holds.  \end{theorem}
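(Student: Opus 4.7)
The plan is to derive Theorem~\ref{comp:GC} from Theorem~\ref{comp:HS} (the flat-boundary case) by combining three ingredients already established above: Proposition~\ref{lcr-scr}, which reduces \GCR to \LCR under \hyp{Gen}; Lemma~\ref{V-flat}, which converts a general ``good'' Ventcell boundary condition to a boundary condition of the form \eqref{Vbc-HS}; and the explicit change of variables of Section~\ref{RFB} that flattens $\domeg$ locally near any boundary point. Thus, as soon as the \LCR is known at every $(\bar x,\bar t)\in\Omegb\times(0,T)$, the global comparison \GCR follows automatically from Proposition~\ref{lcr-scr} (whose hypotheses \hyp{$\Omega$} and \hyp{Gen} for $F,\G$ are contained in either \hyp{Comp-1} or \hyp{Comp-2}).

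To prove the \LCR, we distinguish two cases. If $\bar x\in\Omega$, we choose $\bar r,\bar h$ so small that $\overline{Q_{\bar x,\bar t}^{\bar r,\bar h}}\subset \Omega\times(0,T)$; the Ventcell boundary condition then does not enter the picture at all, and one recovers the usual interior strong comparison result from \hyp{Comp-1} or \hyp{Comp-2} by the classical doubling-of-variables argument in the first-order case, and by the standard Jensen–Ishii machinery together with \hyp{Cont} in the second-order case. If $\bar x\in\domeg$, we choose $\bar r$ smaller than the injectivity radius of the projection onto $\domeg$ and apply the local diffeomorphism $\Psi$ from Section~\ref{RFB} to a neighborhood of $\bar x$. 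By \hyp{$\Omega$}, $d\in W^{4,\infty}$ and therefore $\Psi\in W^{4,\infty}$, so in particular $D^2\Psi$ and the linear map $a(\cdot)$ appearing in \eqref{eqn.change.coord} are Lipschitz continuous. Pushing $u,v$ forward by $\Psi$, we obtain an \usc subsolution and a \lsc supersolution of a problem posed on a parabolic cylinder contained in the half-space $\{y_N>0\}$, with a new interior nonlinearity $\widetilde F$ and a new boundary nonlinearity $\widetilde \G$ given by the formulas of Section~\ref{RFB}.

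The technical heart of the argument is to check that the transformed problem still satisfies \hyp{Comp-1} or \hyp{Comp-2}. The general assumption \hyp{Gen} is preserved because $\Psi$ is a $C^3$-diffeomorphism with bounded derivatives, and because the conjugation $M_T\mapsto \mathcal{M}(y,t,p',M_T)$ is affine in $M_T$ and $p'$ with Lipschitz coefficients; in particular the ellipticity is preserved and the Ventcell constant $\bar c$ of \hyp{Gen}-$(iii)$ is only rescaled by a uniformly positive factor $|D\Psi(y',0)e_N|^2$, so that the conclusion of Lemma~\ref{V-flat} applies and yields a boundary condition of the form \eqref{Vbc-HS} with a continuous $G$. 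The continuity assumption \hyp{Cont} is exactly designed so that its $Q$-term absorbs the quadratic correction $\trp{D\Psi}(y)\cdot\trp{D\Psi}(y)$ produced by the change of variables on the matrix argument, as well as the linear $a(\Psi(y))(\trp{D\Psi})^{-1}(y)p'$ perturbation in $\mathcal{M}$: this is the very reason for its unusual formulation, and with the Lipschitz regularity of $D\Psi$ and $a$ one checks that \hyp{Cont} for $F,\G$ transfers to \hyp{Cont} for $\widetilde F,\widetilde \G$. Finally the normal coercivity \hyp{NC} (resp.\ the normal strong ellipticity \hyp{NSE}) is preserved because the outer normal $n(\Psi(y',0))$ is mapped to $e_N$ under the pushforward: the coefficient of $\lambda$ in the inequalities defining \hyp{NC}/\hyp{NSE} only changes by the uniformly positive scalar $|Dd(\Psi(y',0))|^2=1$, so the constants $\bar\eta,\bar C$ transfer to strictly positive constants for $\widetilde F$.

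Once these verifications are made, Theorem~\ref{comp:HS} applies in a parabolic cylinder $Q^{\mathrm{flat}}$ contained in the half-space, yielding the local comparison of the transformed sub- and supersolutions. Pulling back by $\Psi^{-1}$ and noting that $\Psi$ maps parabolic cylinders of the flat problem to sets comparable (in the sense of inclusion between cylinders $Q_{\bar x,\bar t}^{r_1,h}\subset\Psi(Q^{\mathrm{flat}})\subset Q_{\bar x,\bar t}^{r_2,h}$, with $r_1/r_2$ bounded away from $0$) to the cylinders $Q_{\bar x,\bar t}^{r,h}$, we obtain the \LCR at $(\bar x,\bar t)$ in $\Omegb\times(0,T)$. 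The main obstacle is really bookkeeping: tracking that every quantitative ingredient of \hyp{Comp-1}/\hyp{Comp-2} is preserved, in particular the $Q$-term in \hyp{Cont} and the constant $\bar c$ in \hyp{Gen}-$(iii)$, both of which are perturbed in a controlled but nontrivial way by $\Psi$. The rest of the argument is then essentially book-keeping on inclusions of parabolic cylinders.
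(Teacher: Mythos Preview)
Your proposal is correct and follows exactly the approach the paper indicates: the paper treats Theorem~\ref{comp:GC} as an immediate corollary of Theorem~\ref{comp:HS}, stating only that ``the fact that a \GCR reduces to a \LCR allows a local flattening of the boundary, therefore to recover the half-space case,'' without spelling out the verifications. You have simply filled in the details the paper leaves implicit---the preservation of \hyp{Gen}, \hyp{Cont}, and \hyp{NC}/\hyp{NSE} under the change of variables $\Psi$ of Section~\ref{RFB}, and the bookkeeping on parabolic cylinders---all of which are routine given the machinery already in place (Proposition~\ref{lcr-scr}, Lemma~\ref{V-flat}, and the explicit formulas \eqref{eqn.change.coord} and \eqref{def:P}).
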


In the proofs of Theorem~\ref{comp:HS} or Theorem~\ref{comp:GC}, there is of course a
tremendous difference in showing that that the \LCR holds when:\\ $\hspace*{1em}\bullet$ either
$(x,t) \in \Omega \times (0,T)$ and we can choose $\bar r, \bar h>0$ such that
$\overline{Q_{x,t}^{\bar r,\bar h}} \subset \Omega \times (0,T)$\,;\\ $\hspace*{1em}\bullet$ or
$(x,t) \in \domeg \times (0,T)$\,.

While the first case is classical and requires arguments which are by now rather well-known---mostly
described in the User's guide of Crandall, Ishii and Lions~\cite{Users}---, new ideas are needed to
treat the latter one. Of course we only focus on this second case in the proofs; however, these new
ideas also have to be combined with ``classical comparison arguments", with which we assume that the
reader is familiar.

When, at several stages of the following comparison proofs, we refer to ``classical comparison
arguments", we mean those which can be found in \cite{Users}, \ie those with which one can obtain a
\LCR for $(x,t) \in \Omega \times (0,T)$.

%%%%%%%%%%%%%%%%%%%%%%%%%%%%%%%%%%%%%%%%%%%%%%%%%%%%%%%%%%%%%%%%%%%%%%%%%%
\section{Proof of \LCR in the Half-Space Case in the First-Order Case}
\label{case1}
%%%%%%%%%%%%%%%%%%%%%%%%%%%%%%%%%%%%%%%%%%%%%%%%%%%%%%%%%%%%%%%%%%%%%%%%%%

As the title of the section indicates it, we are going to consider
Problem~\eqref{Eqn}-\eqref{Vbc-HS} set in  $\Omega=\{(x',x_N)\in\R^{N}:x_N>0\}$. 

The aim of this section is to prove that a \LCR holds for any point $(\tilde x ,\tilde t)\in \Omegb
\times (0,T)$ and, of course, the only difficulty is when $\tilde x\in \domeg$, otherwise the result
just follows by a standard comparison argument if we choose $\bar r,\bar h$ small enough in order to
have $Q_{\tilde x ,\tilde t}^{\bar r,\bar h}\subset \Omega \times (0,T).$ Indeed, \hyp{Cont} is a stronger assumption than the
classical hypothesis under which such a comparison result holds.

For $\tilde x\in \domeg$, we are going to show that such a \LCR holds in $\overline{Q_{\tilde x
,\tilde t}^{\bar r,\bar h}}$ for any $\bar r>0$ and $0<\bar h<\tilde t$. To do so, we argue by
contradiction assuming that
\[ \max_{\overline{Q_{\tilde x ,\tilde t}^{\bar r,\bar h}}}(u-v)_+ > 
\max_{\partial_p {Q_{\tilde x ,\tilde t}^{\bar r,\bar h}} }(u-v)_+ .\]
{ Using \hyp{Gen}-$(i)$, we can assume without loss of generality that $u$ is a {\em
strict subsolution}, i.e. all its viscosity subsolution inequalities holds with $\leq -\eta <0$
instead of $\leq 0$; indeed, it suffices to replace $u(x,t)$ by
$$ \tilde u(x,t):=u(x,t)-\kappa(K t-x_N)\; ,$$
for $0<\kappa \ll 1$ and $K$ large enough to get the strict subsolution property. We show that the \LCR inequality
holds in $\overline{Q_{\tilde x,\tilde t}^{\bar r,\bar h}}$ for
$\tilde u$ and $v$, and then we let $\kappa$ tend to $0$. In the sequel, we keep the notation $u$ for the {\em strict} subsolution.}

Now, in $\overline{Q_{\tilde x ,\tilde t}^{\bar r,\bar h}}\times \overline{Q_{\tilde x ,\tilde t}^{\bar
r,\bar h}}$, we introduce the function 
\[ \Psi_{\e,L}(x,t,y,s):=u(x,t)-v(y,s)-\frac{|x'-y'|^2}{\e^2}-\frac{|t-s|^2}{\e^2}-L|x_N-y_N|, \]
where the parameters $\e>0$ and $L>0$ are going to be chosen small enough and large enough
respectively. 
 
This function achieves its maximum at $(\xb,\yb,\tb,\sb)$---we drop the dependence of this point in
$\e$ and $L$ in order to simplify the notations---and with a suitable choice of $\e$ and $L$ (small
enough and large enough respectively), we know that $(\xb,\tb),(\yb,\sb)\notin \partial_p Q_{\tilde x ,\tilde t}^{\bar r,\bar h}$ by our contradiction hypothesis since, by classical arguments,
\[ u(\xb,\tb)-v(\yb,\sb) \to \max_{\overline{Q_{\tilde x ,\tilde t}^{\bar r,\bar h}}}(u-v)_+
\quad \hbox{when  }\e\to 0, L\to +\infty.\]
Hence each point $(\xb,\tb),(\yb,\sb)$ belongs either to $Q_{\tilde x ,\tilde t}^{\bar r,\bar h}$ or
to $\overline{Q_{\tilde x,\tilde t}^{\bar r,\bar h}} \cap \{t=\tilde t\} $. But, according to
Proposition~\ref{regul}-$(i)$,
these two cases can be treated similarly. And a similar remark holds for all the maximum/minimum points we are going to encounter below.
 
Moreover, the classical estimates hold,
$$
\frac{|\bar x'-\bar y'|^2}{\e^2}+\frac{|\bar t-\bar s|^2}{\e^2}\to 0 \quad \hbox{as  }\e \to 0, L\to +\infty.
$$

\medskip

%------------------------------------------------------------------------------------------------
\noindent\textbf{(a)} \textit{We first prove that $\xb_N=\yb_N=0$ 
for a well-chosen constant $L$, large enough compared to $\eps$.}\smallskip

Indeed, let us start by assuming that $\xb_N\neq \yb_N$. We then face two situations:\\[2mm]
$(i)$ if $\xb_N>0$, whether $\xb_N-\yb_N$ is positive or negative we may use the inside equation
\begin{equation}\label{first.order.inside}
    a_\e + F(\xb,\tb,\pe \pm Le_N)\leq -\eta\,,
\end{equation}
where
\[ a_\e:=\frac{2(\tb-\sb)}{\e^2}\quad\hbox{and}\quad \pe:=\frac{2(\xb'-\yb')}{\e^2}.\]

\noindent $(ii)$ If $\xb_N=0$, then $|x_N-y_N|=-(x_N-y_N)$ if $x_N,y_N$ are close enough to $\xb_N,\yb_N$ and
the boundary condition yields
\begin{equation}\label{first.order.bndry}
     \min\left(a_\e + F\left(\xb,\tb,\pe - Le_N\right), L+G\left(\xb',\tb,\pe, \frac 2
     {\e^2}Id\right)\right)\leq -\eta\,.
\end{equation}
Now, it is clear that, for a choice of the form $L=C\e^{-2}$ with $C$ large enough, none of
\eqref{first.order.inside} or \eqref{first.order.bndry} can hold and therefore $\xb_N=\yb_N$.

\medskip

Next, we again argue by contradiction, assuming that $\xb_N=\yb_N>0$. As it is well-known,
we can add a term in the test-function in order that $(\xb,\yb,\tb,\sb)$ becomes a strict maximum
point. { We are not going to do it here in order to simplify matter, but we point out
that \hyp{Gen}-$(i)$ ensures that these additional terms would just produce small perturbations in
the inequalities.}
% and still
%satisfies $\xb_N,\yb_N>0$. { ici on récupère aussi $\xb_N=\yb_N$ non ?}

Then, regularizing the term $|x_N-y_N|$ by changing it into $(|x_N-y_N|^2+\alpha^2)^{1/2}$ for
$0<\alpha \ll 1$, at the new maximum point $(\xb_\alpha,\yb_\alpha,\tb_\alpha,\sb_\alpha)$, we have
in particular
\begin{equation}\label{ineq-pos1}
a_{\e,\alpha} + F\Big(\xb_\alpha,\tb_\alpha, p_{\e,\alpha} + 
L\frac{((\xb_\alpha)_N-(\yb_\alpha)_N)}{(|(\xb_\alpha)_N-(\yb_\alpha)_N|^2+\alpha^2)^{1/2}}\,e_N\Big) \leq -\eta ,
\end{equation}
where $a_{\e,\alpha} ,p_{\e,\alpha}$ are defined in the same way as $a_\e,\pe$ 
replacing $\xb,\yb,\tb,\sb$ by $\xb_\alpha,\yb_\alpha,\tb_\alpha,\sb_\alpha$. 
This inequality implies, using \hyp{NC}, that
\begin{equation}\label{key-est}
L\frac{((\xb_\alpha)_N-(\yb_\alpha)_N)}{(|(\xb_\alpha)_N-(\yb_\alpha)_N|^2+\alpha^2)^{1/2}}
=O(|p_{\e,\alpha}|+|a_{\e,\alpha}|)=o(\e^{-1}) ,
\end{equation}
this estimate being uniform \wrt $\alpha$. Notice that, in order to have the right estimate of
$a_{\e,\alpha}$, we need to double the variables in the same way for both $x'$ and $t$: this is where
the local Lipschitz continuity in $t$ of $F,G$ is required, \cf Remark~\ref{rem:Q.lip}.

Estimate~\eqref{key-est} is crucial since the easy estimate
$$ L\frac{((\xb_\alpha)_N-(\yb_\alpha)_N)}{(|(\xb_\alpha)_N-(\yb_\alpha)_N|^2+\alpha^2)^{1/2}}\leq L,$$
does not allow to carry out the classical arguments of the comparison proof for first-order Hamilton-Jacobi Equations since $L$ is of order $\e^{-2}$.

On the contrary,  with \eqref{key-est}, we easily obtain a contradiction for $\alpha$
small enough: indeed, the supersolution $v$ satisfies a similar inequality, just replacing
$\xb_\alpha$ by $\yb_\alpha$, $\tb_\alpha$ by $\sb_\alpha$ and $\leq -\eta$ by $\geq 0$, namely
\begin{equation}\label{ineq-pos2}
a_{\e,\alpha} + F\Big(\yb_\alpha,\sb_\alpha, p_{\e,\alpha} + 
L\frac{((\xb_\alpha)_N-(\yb_\alpha)_N)}{(|(\xb_\alpha)_N-(\yb_\alpha)_N|^2+\alpha^2)^{1/2}}\,e_N\Big) \geq 0.
\end{equation}
Hence, we
are in the same situation as in the classical proof with a doubling of variable and combining \eqref{ineq-pos1}-\eqref{ineq-pos2}
for $0<\alpha\ll 1$, using \eqref{key-est}, one obtains easily the desired contradiction for $\e$ small enough.
 
\medskip

The rest of the proof consists in dealing with the case $\xb_N=\yb_N=0$ since we have left out the
other cases.

\medskip

%----------------------------------------------------------------------------------------------
\noindent\textbf{(b)} \textit{We perform a twin blow-up \`a la Forcadel, Imbert and Monneau
\cite{FIM}.}\smallskip

To do so, we fix $\e$ and $L$ and for $0<\delta \ll 1$, we introduce the following functions
\begin{equation}\label{def.double.bup}
    \begin{aligned}
    u_\delta(x,t) &:= \frac{1}{\delta^2}\left(u(\xb'+\delta x', \delta^2  x_N,\tb +\delta  t)-
                    u(\xb,\tb)-\delta \pe \cdot x' -\delta a_\e t\right),\\
    v_\delta(y,s) &:= \frac{1}{\delta^2}
                    \left(v(\yb'+\delta y',\delta^2  y_N,\sb +\delta s)-v(\yb,\sb)-\delta\pe \cdot y' -\delta a_\e s\right).
\end{aligned}
\end{equation}
It is worth pointing out that, a priori, these functions need to be defined differently in the cases
when $\tb$ and/or $\sb$ can be equal to $\tilde t$ because we have to restrict ourselves to either $t\leq 0$ 
and/or $s\leq 0$. In order to unify all the cases, we always define these functions for $t,s\leq 0$; 
Proposition~\ref{regul} ensures that, as far as viscosity inequalities are concerned, the points on the upper 
boundary $t=0$ or $s=0$ behave as points in $\{t<0\}$ or $\{s<0\}$.

Hence $u_\delta$ is defined on the set
$$ \Theta_\delta:=\{(z,\tau): ((\delta z',\delta^2z_N),\delta \tau) \in \left[\overline{Q_{\tilde x,\tilde t}^{\bar r,\bar h}}-(\xb,\tb)\right],\tau \leq 0\},$$ 
while $v_\delta$ is defined on the set
$$ \Theta_\delta':=\{(z,\tau): ((\delta z',\delta^2z_N),\delta \tau) \in \left[\overline{Q_{\tilde x,\tilde t}^{\bar r,\bar h}}-(\yb,\sb)\right],\tau \leq 0\},$$
It is clear that both $\Theta_\delta, \Theta_\delta'$ tend to $\Omegb \times (-\infty, 0)$ when $\delta$ tends to $0$, where $\Omega$ is given by \eqref{Om-HS}.

Notice also that the Ventcell boundary condition forces us to use a different scaling in the tangent
variables ($x',t,y',s$) and in the normal ones ($x_N,y_N$) and to introduce the compensating terms
$a_\e$ and $p_\e$, two main differences with \cite{FIM}. 

Using the maximum property of function $\Psi_{\e,L}$ at $(\xb,\tb,\yb,\sb)$, we deduce the estimate 
\begin{equation}\label{ineq-ud-vd}
u_\delta(x,t)-v_\delta(y,s) 
\leq \frac{|x'-y'|^2}{\e^2}+\frac{| t - s|^2}{\e^2}+L| x_N- y_N| ,
\end{equation}
and $u_\delta(0,0)=v_\delta(0,0)$. 
This inequality shows that $u_\delta$ is locally bounded from above and $v_\delta$ is locally bounded from below, and 
both uniformly w.r.t. $\delta$.

%We set $\bar u=\limssup u_\delta$ and $\bar v=\limiinf v_\delta$ (the limits being taken as
%$\delta\to0$). These two functions are defined on $\Omegb \times \R$ and take
%values in $\R \cup\{-\infty\}$ and $\R \cup\{+\infty\}$ respectively. We immediately point out that the infinite values that $\bar u,\bar v$ 
%can take are a first difficulty that we have to solve.
%
%These functions have three key properties: first, by taking the $\limsup$ as $\delta\to0$ in \eqref{ineq-ud-vd}, we clearly have, for any $(x,t), (y,s) \in
%\Omegb \times \R$,
%\begin{equation}\label{keyineq}
%    \bar u (x,t)-\bar v(y,s) \leq \frac{|x'-y'|^2}{\e^2}+
%    \frac{| t - s|^2}{\e^2}+L| x_N- y_N| .
%\end{equation}
%Then we have $\bar u(0,0)=\bar v(0,0)=0$. Indeed, by definition, $\bar u(0,0)\geq 0$ and $\bar v(0,0)\leq 0$ but \eqref{keyineq} 
%with $(x,t)=(y,s)=(0,0)$ gives us the opposite inequalities.
%
%Finally, again by definition, we have the following homogeneity properties: for 
%$w=\bar u,\bar v$ and for all $\mu >0$, we have, for any $(x,t)=((x',x_N),t) \in \Omegb \times \R$
%\begin{equation}\label{homo}
% w((\mu x',\mu^2 x_N),\mu t)= \mu^2 w((x',x_N),t)\;.
%\end{equation}
%Indeed, this property is obtained after sending $\delta$ and $\delta'=\delta\mu$ to $0$ in
%$u_\delta(\mu x',\mu^2 x_N,\mu t)$ and $v_\delta(\mu x',\mu^2 x_N,\mu t)$.
%But, in order to take advantage of these properties, we have to take care of the infinite values that $\bar u,\bar v$  can have.
%
The functions $u_\delta,v_\delta$ are a strict $\eta$-subsolution and a supersolution respectively of the following boundary problem 
\begin{equation}\label{eq.double.bup}
\begin{aligned}
    \delta w_t+ a_\e+ F(\xb' +\delta x', \delta^2  x_N, \tb +\delta  t , 
    \pe + \delta \Dxp  w+ w_{ x_N} e_N) = 0 &\text{ in  }\{ x_N>0\}\,,\\
     \qquad- w_{x_N}+G(\xb' +\delta x', \delta^2  x_N, \tb +\delta  t , 
    \pe + \delta \Dxp  w, \Ddxp   w) = 0 &\text{ on  }\{ x_N=0\}\,.
\end{aligned}
\end{equation}

\smallskip

\noindent\textbf{(c)} \textit{Reduction to locally bounded functions $u_\delta,v_\delta$.}
\smallskip

We are going to prove that we can assume w.l.o.g. that $u_\delta$ and $v_\delta$ are locally bounded.
For $u_\delta$, we use that, by \eqref{ineq-ud-vd}, $(0,0)$ is a maximum point of the function
\[(x,t)\mapsto u_\delta(x,t)- \frac{|x'|^2}{\e^2}-\frac{|t|^2}{\e^2}-L x_N\,.\]

By Proposition~\ref{prop:jets}, if $\underline{\lambda}$ is the maximal solution of
\begin{equation}\label{eq:ul}
a_\e + F(\xb', 0, \tb  , \pe + \lambda e_N)= -\eta\,,
\end{equation}
then for any $\lambda>\underline{\lambda}$, $(p_x, p_t,X')=((0,\lambda),0,  2 \e^{-2}\Id)\in
\PP^{2,+}_r u_\delta(0,0)$. Letting $\lambda \to \underline{\lambda}$, we have at the same time
$a_\e + F(\xb', 0, \tb  , \pe +  \underline{\lambda} e_N)= -\eta $ and 
\[  
    \min\Big(a_\e + F(\xb', 0,\tb  , \pe +  \lambda e_N), - \lambda 
    +G(\xb', 0, \tb, \pe , 2\e^{-2}\Id)\Big)\leq -\eta\,,
\]
for any $\lambda \in [\underline{\lambda},+\infty)$.
% and therefore there exists $\bar \lambda\leq L$ such that we have both
%\[ \begin{aligned}
%    &a_\e + F(\xb', 0, \tb  , \pe +  \bar \lambda e_N)\leq -\eta  ,\\
%     \text{and}\quad &- \bar \lambda +G(\xb', 0, \tb, \pe , 2\e^{-2}\Id) \leq -\eta .
%\end{aligned}\]
%Indeed we can take $\bar \lambda$ as the supremum of all \lambda \Lambda a_\e + F(\xb', 0, \tb  , \pe +  \underline{\lambda} e_N)\leq -\eta .

We deduce from these inequalities that, for any $K>0$, the functions
%\[\underline \psi_K(x,t):=-K+\bar \lambda x_N+ \frac{|x'|^2}{\e^2}+\frac{|t|^2}
%{\e^2} ,\]
\[\underline \psi_K(x,t):=-K+\underline{\lambda} x_N+ \frac{|x'|^2}{\e^2}+\frac{|t|^2}
{\e^2} ,\]
are
approximate $\eta$-strict subsolutions of the problem in a neighborhood of $(0,0)$. Meaning, it is a subsolution
where $\leq -\eta $ is replaced by $\leq -\eta + o_\delta (1)$ to take care of the terms like $\delta x'$,
$\delta  t$,  $\delta w_t$, $\delta \Dxp  w$ in the equations.

The consequence is that $\tilde u_\delta:=\max(\underline \psi_K ,u_\delta)$ is an approximate
$\eta$-strict subsolution as well,
which is bounded from below. 

A much easier but similar argument allows to bound $v_\delta$ from above by just using the 
coercivity of $F$ in $p$, \cf \hyp{NC}, and in particular in the $p_N$-direction, by building a supersolution $\overline \psi_K$ of the form $\overline \psi_K (x,t):=K +\bar \lambda x_N$ for $\bar \lambda \ll -1$ and $K>0$ large. We denote by $\tilde v_\delta=\min(\overline \psi_K, v_\delta)$ the supersolution that we obtain in that way.

Finally choosing $K$ large enough---depending on $\e$---, we can assume \wlg that $\tilde u_\delta,\tilde v_\delta$ still satisfy
Inequality~\eqref{ineq-ud-vd}, at least in a neighborhood of $(0,0,0,0)$.

\medskip

\noindent\textbf{(d)} \textit{Passage to the limit in both viscosity inequalities and in the maximum
point property \eqref{ineq-ud-vd}.}\smallskip

Since the functions $\tilde u_\delta, \tilde v_\delta$ are uniformly bounded \wrt $\delta$, there is
no difficulty in the passages to the limit in the viscosity inequalities by using the half-relaxed
limit method. But the problem is that it is not clear---and perhaps false---that $\tilde u=\limssup \tilde
u_\delta$ and $\tilde v=\limiinf \tilde v_\delta$ still satisfy \eqref{ineq-ud-vd}.

To overcome this difficulty, we use the ``sup-convolution trick'' of Forcadel, Imbert and Monneau \cite{FIM}\footnote{The authors are indebted
to Nicolas Forcadel for explaining them in full details this trick, pointing out in particular all
its advantages.}. To do so, for $0<\beta \ll \e^2$,
we introduce the function
\begin{equation}\label{SC0}
 \tilde u_\delta^\beta (x,t):= \sup_{(z,\tau)\in \Theta_\delta}
\left(\tilde u_\delta (z,\tau)-\frac{|x'-z'|^2}{\beta^2}-\frac{|t-\tau |^2}{\beta^2}-\frac 1
    \beta |x_N-z_N| \right),
\end{equation}
where we recall that $\Theta_\delta$ is the set where $\tilde u_\delta$ is defined (see above).

Since the functions $\tilde u_\delta$ are uniformly bounded, usual arguments imply that, for fixed
$\beta$, the functions are locally uniformly bounded
and Lipschitz continuous \wrt all variables. Moreover, still for fixed $\beta$, we have for any $\delta$, on one hand
\begin{equation}\label{keyineqSC1}  \tilde u_\delta \leq  \tilde u_\delta^\beta,
\end{equation}
and, on the other hand (see in the Appendix, Section~\ref{app:supconv} for the computation), if $\beta^{-1}\geq L$
\begin{equation}\label{keyineqSC2}
\tilde u^\beta_\delta (x,t)-\tilde v_\delta (y,s) \leq \Ceb\left(\frac{|x'-y'|^2}{\e^2}+\frac{|t-s|^2}{\e^2}\right)+
L |x_N- y_N|,
\end{equation}
where $\Ceb:= \left(1-\frac{\beta^2}{\e^2}\right)^{-1}$.

Next we use Ascoli's Theorem: since the $\tilde u_\delta^\beta$ are uniformly bounded and Lipschitz continuous---at least locally---, there exists a subsequence
denoted by $\tilde u_{\delta'}^\beta$ which converges locally uniformly to a continuous function
$U^\beta$. And we set
$\tilde u=\limssup \tilde u_{\delta'}$ and $\tilde v=\limiinf \tilde v_{\delta'}$, \ie we take the
half-relaxed limits along this subsequence. Taking the $\limssup$ in \eqref{keyineqSC1} and
\eqref{keyineqSC2}, we obtain $ \tilde u \leq U^\beta$ and
$$U^\beta (x,t)-\tilde v (y,s) \leq \Ceb\left(\frac{|x'-y'|^2}{\e^2}+\frac{|t-s|^2}{\e^2}\right)+
L |x_N- y_N|.$$
As a consequence, we have, for any $\beta$ small enough
$$\tilde u (x,t)-\tilde v (y,s) \leq \Ceb\left(\frac{|x'-y'|^2}{\e^2}+\frac{|t-s|^2}{\e^2}\right)+
L |x_N- y_N|\,,$$
and letting $\beta$ tend to $0$, we recover \eqref{ineq-ud-vd}, \ie
\begin{equation}\label{keyineq}
\tilde u (x,t)-\tilde v (y,s) \leq \frac{|x'-y'|^2}{\e^2}+\frac{|t-s|^2}{\e^2}+
L |x_N- y_N|.
\end{equation}
We notice that this inequality implies $\tilde u (0,0)-\tilde v (0,0)\leq 0$ and therefore $\tilde u (0,0)=\tilde v (0,0)= 0$ because, by definition
$\tilde u (0,0)\geq 0$ and $\tilde v (0,0)\leq 0$.

As we mention it above, there is no difficulty in passing to the limit in the viscosity inequalities
and, by standard stability results, the following properties hold true:
if $ x_N, y_N>0$ we have
\[ a_\e+F(\xb,\tb, \pe+\tilde u_{x_N} e_N) \leq -\eta <0\leq a_\e+F(\yb,\sb, \pe +\tilde v_{x_N} e_N) ,\]
while, on the boundary, we have
\[\begin{aligned} 
    \min(a_\e+F(\xb,\tb, \pe +\tilde u_{x_N} e_N), -\tilde u_{x_N}+G(\xb',\tb, \pe ,\Ddxp   \tilde u))
    &\leq -\eta  ,\\
    \max(a_\e+F(\yb,\sb, \pe +\bar v_{x_N} e_N), -\bar v_{x_N}+G(\yb',\sb, \pe ,\Ddxp   \bar v))
    &\geq 0 . 
\end{aligned}\]
Moreover we have
$$\tilde u=\max(\underline \psi_K ,\bar u)\; , \; \tilde v=\min(\overline \psi_K ,\bar v),$$
where $\bar u=\limssup u_{\delta'}$ and $\bar v=\limiinf v_{\delta'}$.

The next step consists in regularizing $\tilde u$ and $\tilde v$ by tangential sup and
inf-convolution respectively. That is, as we did above but only in $x'$ and $t$. To do so, we
introduce the following notations: if $\chi$ is an \usc (resp. \lsc) function defined on $\Omegb
\times \R$ we set, for $0<\alpha \ll 1$,
\begin{equation}\label{sup-conv}
 \chi^\alpha (x,t):= \sup_{(z',\tau)\in\R^{N-1}\times\R}
\left(\chi((z',x_N),\tau)-\frac{|x'-z'|^2}{\alpha^2}-\frac{|t-\tau |^2}{\alpha^2}\right) ,
\end{equation}
(resp.
\begin{equation}\label{inf-conv}
\chi_\alpha (y,s):= \inf_{(z',\tau)\in\R^{N-1}\times\R}
\left(\chi((z',y_N),\tau)+\frac{|y'-\tilde z'|^2}{\alpha^2}+\frac{|s- \tau |^2}{\alpha^2}\right)
 .)
 \end{equation}
These functions are well-defined as soon as $\chi$ has a subquadratic behavior in $(x',t)$ either from above or from below,
which is the case for all the
functions $\bar u, \bar v, \underline \psi_K ,\overline \psi_K$. And with these notations, we have, for $0<\alpha \ll \e$
$$\tilde u^\alpha =\max((\underline \psi_K)^\alpha  ,\bar u^\alpha)\; , \; \tilde v_\alpha=\min((\overline \psi_K)_\alpha ,\bar v_\alpha).$$

Applying the sup-inf convolution to Inequality~\eqref{keyineq}, using a similar computation as in the
Appendix, Section~\ref{app:supconv}, gives
\begin{equation}\label{keyineqbis1}
\tilde u^\alpha (x,t)-\tilde v_\alpha (y,s) \leq \Cea\left(\frac{|x'-y'|^2}{\e^2}+\frac{|t-s|^2}{\e^2}\right)+
L |x_N- y_N|,
\end{equation}
where $\Cea:= \left(1-\frac{\alpha^2}{\e^2}\right)^{-1}$.

Hence $(0,0,0,0)$ is still a maximum point of the function
$$
\tilde u^\alpha (x,t)-\tilde v_\alpha (y,s)- \Cea\left(\frac{|x'-y'|^2}{\e^2}+\frac{|t-s|^2}{\e^2}\right)-
L|x_N-
y_N|.
$$

Now we claim that $\tilde u^\alpha, \tilde v_\alpha$ satisfy exactly the same viscosity inequalities as $\tilde u, \tilde v$
respectively: proving these properties do not present any difficulty since the nonlinearities involved in the limiting problem
do not depend neither on $x$ nor on $t$; the twin blow-up has the effect to ``freeze'' the dependence in $x$ and $t$.

We also recall that the functions $\tilde u^\alpha$ and $ \tilde v_\alpha$ are Lipschitz continuous in the tangent variables $(x',t)$ 
uniformly \wrt $x_N$ or $y_N$, $\tilde u^\alpha$ being semi-convex and $\tilde v_\alpha$ semi-concave.  And because of \hyp{NC},
$\tilde u^\alpha$ is also Lipschitz continuous in $x_N$ uniformly \wrt the tangent variables $(x',t)$ but only for $x_N>0$; at this point, 
we may still have a discontinuity at $x_N=0$.

To solve this issue, we use Proposition~\ref{regul} together with this Lipschitz continuity in $x_N$: this implies that $\tilde u^\alpha$ is 
necessarily continuous at any point $((x',0),t)$ in a neighborhood of $(0,0)$. Hence $\tilde u^\alpha$
is Lipschitz continuous up to the boundary $\{x_N=0\}$ \wrt all variables. And concerning $\bar v_\alpha$, it may still present 
discontinuities but, again thanks to Proposition~\ref{regul}, it satisfies
$$  \tilde v_\alpha ((x',0),t) = \liminf_{{\substack{ (y,s)\to ((x',0),t)\\
    (y,s)\in \Omega \times \R}}} \tilde v_\alpha (y,s),$$
in a neighborhood of $(0,0)$.

The main consequence of these properties are that $\tilde u^\alpha = \bar u^\alpha$ in a neighborhood of $(0,0)$ because the
continuous function $(\underline \psi_K)^\alpha$ satisfies the inequality
$(\underline \psi_K)^\alpha(0,0)<0$. 

%Moreover, one easily shows that the sup-convolution preserves \eqref{homo} and therefore
%$\bar u^\alpha$ also satisfies it. This
%implies that actually $\bar u^\alpha$ is locally bounded on $\Omegb \times \R$.

For $\tilde v_\alpha$, the situation is more complicated since this function may present discontinuities in $x_N$ but we also have
$\tilde v_\alpha(y_k,s_k)= \bar  v_\alpha(y_k,s_k)$ for any sequence $(y_k,s_k)\to ((x',0),t)$ such that 
$$  \tilde v_\alpha(y_k,s_k)\to \tilde v_\alpha ((x',0),t)\; .$$
Roughly speaking, $\tilde v_\alpha=\bar  v_\alpha$ for all the points which plays a role in our
arguments. So, in the sequel, we
are going to argue with $\bar u^\alpha$ and $\bar  v_\alpha$.

\medskip

\noindent\textbf{(e)} \textit{Reduction to a strict subsolution which is smooth \wrt the $x'$-variable.}
\smallskip

As we mentioned above, passing to the limit in the equations is easy and we get that
$\bar u^\alpha$ is a subsolution of the following Ventcell problem
\begin{equation}\label{eq:Ventcell-bup}
    \begin{cases}
        \tilde F(u_{x_N}) \leq 0\,,\\ 
        -u_{x_N}+\tilde G(\Ddxp  u)\leq0\,,
    \end{cases}
\end{equation}
where, for any $p_N\in \R$ and any $(N-1)\times (N-1)$ symmetric matrix $M'$,
    \[ 
    \tilde F(p_N):= a_\e+F(\xb,\tb, \pe +p_N e_N)+\eta\,,\quad \tilde G(M'):=G(\xb',\tb, \pe
    ,M')+\eta\,.
    \]
Notice that $\tilde F(p_N), \tilde G(M')$ are continuous, quasiconvex functions (because of \hyp{NC}-\hyp{QC-G}), and $\tilde G$ satisfies the
ellipticity condition.

Moreover $(0,0)$ is a maximum point of 
$$
\bar u^\alpha (x,t)-\bar  v_\alpha (y,s)- \Cea\left(\frac{|x'-y'|^2}{\e^2}+\frac{|t-s|^2}{\e^2}\right)-
L|x_N-
y_N|.
$$
and changing $\bar u^\alpha(x,t)$ in $\bar u^\alpha(x,t)-|x|^2-t^2$, we may assume \wlg that $(0,0)$ is a {\em strict} maximum point of this function.
In particular, by restricting ourselves to $x_N=y_N$, $(0,0)$ is a {\em strict} maximum point of 
\begin{equation}\label{keyineqter1}
    \begin{aligned}
        \chi(x',y',t,s,x_N):=\bar u^\alpha ((x',x_N),t) &- \bar  v_\alpha ((y',x_N),s)\\ 
        &- \Cea\left(\frac{|x'-y'|^2}{\e^2}+\frac{|t-s|^2}{\e^2}\right).
    \end{aligned}
\end{equation}
In addition, this new $\bar u^\alpha(x,t)$ is still a subsolution of a Ventcell problem like
\eqref{eq:Ventcell-bup} in a neighborhood of $(0,0)$---say for $t\in (-r,0], x'\in B(0,r), x_N\in
[0,r)$ for some $r>0$ small enough---, changing perhaps $\eta$ in $\eta/2$.

Since $F,G$ satisfy Assumptions~\hyp{Comp-1}, we can apply Corollary~\ref{regul-smooth} in the
Appendix, Section~\ref{app:regsubsol}\footnote{Compared to the appendix, we have here the additional variable $t$ but, since it
acts only as a parameter, this does not create any difficulty.}. In order to simplify the notations, we
are not going to introduce new parameters or points but this result allows us to reduce to the case
when the (strict) subsolution is smooth \wrt the $x'$-variable and has a strict maximum point at
$(0,0)$\footnote{In fact the new maximum point is not necessarily at $(0,0)$ anymore but by the {\em
strict} maximum point property, it has moved at a nearby point and we may also assume that this new
point is also a {\em strict}
maximum point.}.
\begin{remark}\label{simple-pb}
We point out that one of the main effect of the twin blow-up is to reduce to a rather simple Ventcell problem like \eqref{eq:Ventcell-bup}
where only $u_{x_N}$ appears in $\tilde F$ and $\tilde G$ only depends on $\Ddxp  u$; in this simplified context, we can apply without
any difficulty the results of the Appendix, Section~\ref{app:regsubsol}---in particular Corollary~\ref{regul-smooth}. This allows us to use
rather weak quasiconvexity assumptions on $F$ and $G$.
\end{remark}

\newpage

\noindent\textbf{(f)} \textit{To the conclusion}\smallskip

In order to obtain the conclusion, we are going to argue in a quasi-classical way, by doubling the variables:
we set 
$$ \gamma:= G\left(\xb', \bar t, p_\e ,\Ddxp  \bar u^\alpha (0,0)\right),$$
and we introduce the function $\Psi(x,t,y,s)$ which is given by
$$
\bar u^\alpha(x,t)-\bar v_\alpha(y,s)- [\gamma+\eta/2] (x_N-y_N)
-\frac{(x_N-y_N)^2}{\beta^2}- \Cea\left(\frac{|x'-y'|^2}{\e^2}+\frac{|t-s|^2}{\e^2} \right),$$
where $\beta>0$ is a new parameter devoted to tend to $0$. We recall that $\alpha\ll\e$.

We look at maximum points of $\Psi $ in the domain $x',y' \in \overline{B(0,r')}, x_N,y_N  \in [0,r']$ and $t,s\in [-r',0]$ where $r'<r$ is
chosen in such a way that $(0,0)$ is the only maximum point of the above function $\chi$ in this domain.
%We may also assume that, for $\eta$ small enough, $(0,0)$ is the only maximum point of $ \chi(x',y',t,s,x_N)+2\eta x_N$
%in $\overline{B(0,r')}\times \overline{B(0,r')} \times  [-r',r']\times  [-r',r']\times  [-r',r']$; indeed, otherwise this maximum point is necessarely
%achieved for $x_N>0$ and the conclusion follows easily  by standard arguments through the above doubling of variable
%since the boundary condition does not play any role anymore.

%Hence, we fix $\eta$ small enough and, , 

From now on, dropping the dependence of the maximum points in $\e,\alpha,\beta$, we just denote by $(x,t,y,s)$ a maximum point of $\Psi$. By the strict 
maximum point property, we know that $(x,t,y,s)\to (0,0,0,0)$ when $\beta \to 0$. Therefore, if $\beta$ is small enough, $(x,t),(y,s)\in B(0,r')\times [0,r')\times (-r',0] $ and we can look at the viscosity sub and supersolution inequalities.

If $x_N>0$ and $y_N>0$, the $F$-inequalities are both satisfied and we immediately obtain a
contradiction by classical arguments. It is worth pointing out anyway that we have to deal at the
same time with the parameters $\beta$ and $\e$ (and $\alpha$) but the coercivity of $F$, which we have already used
many times, allow to first let $\beta$ tend to $0$ and then $\e$ since all the gradients which are involved remain bounded.

Next, we have to examine the case when $x_N=0$ or $y_N=0$.

We first want to apply the Jensen-Ishii Lemma and to do so, we consider the parabolic semijets introduced
in Section~\ref{sec:jets}. We keep here only the parts of the reduced parabolic sub
and superjets $\PP^{2,-}_r, \PP^{2,+}_r$ which play a role in the equation, \ie the couples
corresponding to $(D_{x_N},\Ddxp  )$. Moreover, in order to use the Jensen-Ishii Lemma, we use
the closures $\bar\PP^{2,+}_r,\bar\PP^{2,-}_r$, \cf the end of Section~\ref{sec:jets}.

We deduce that there exist $(N-1)\times (N-1)$ symmetric matrices $X',Y'$ such that
\begin{equation}\label{ineqmat2}
\left[\begin{array}{cc}X'&0\\0&-Y'\end{array}\right]
 \le 2\Cea \left[\begin{array}{cc}I&-I\\-I&I\end{array}\right];
\end{equation}
and $\lambda_1,\lambda_2 \in \R$ such that
\begin{enumerate}
    \item[$(i)$] $(\lambda_1,X')\in \bar \PP^{2,+}_r \bar u^\alpha  (x,t)$,
    \item[$(ii)$] $(\lambda_2,Y')\in \bar \PP^{2,-}_r \bar v_\alpha (y,s)$,
    \item[$(iii)$]  in addition, we have $X'= \Ddxp   \bar u^\alpha(x,t)$ (because of the
        regularity of $u^\alpha$) and
$$ \lambda_1= \lambda_2= \gamma-\eta/2+ \frac{2(x_N-y_N)}{\beta^2}\,.$$
\end{enumerate}
%where, as in the User's guide of Crandall, Ishii and Lions~\cite{Users}, we define the sets $\bar \PP^{2,+} \bar u^\alpha  (x,t), \bar \PP^{2,-} \bar v_\alpha (y,s)$
%are defined in the following way: $(\lambda,X)\in \bar \PP^{2,+} \bar u^\alpha  (x,t)$ if and only if there exists a sequence
%$(x_k,t_k,\lambda_k,X_k)_k$ converging to $(x,t,\lambda,X)$ with $(\lambda_k,X_k)\in \PP^{2,+} \bar u^\alpha  (x_k,t_k)\}$ for all $k$. And the
%definition of $\bar \PP^{2,-} \bar v_\alpha (y,s)$ is analogous replacing $\PP^{2,+} \bar u^\alpha (x_k,t_k)$ by $\PP^{2,-} \bar v^\alpha  (x_k,t_k)$.

We first examine the case when $x_N=0$: if the Ventcell boundary condition holds, we would have
$$ -\lambda_1+ G(\xb', \bar t, p_\e , X')\leq -\eta,$$
but, taking into account the definition of $\gamma$, this would mean
$$\left[G(\xb', \bar t, p_\e ,\Ddxp   \bar u^\alpha(x,t))-G(\xb', \bar t, p_\e ,\Ddxp   \bar u^\alpha(0,0))\right] + \eta/2 + \frac{2y_N}{\beta^2} \leq -\eta.$$
Clearly this inequality cannot hold if $\beta$ is small enough since $(x,t) \to (0,0)$ when $\beta\to 0$ and the continuity of $\Ddxp   \bar u^\alpha$
implies that $\Ddxp   \bar u^\alpha(x,t)\to \Ddxp   \bar u^\alpha(0,0)$. Hence either $x_N>0$ or the $F$-inequality holds for $\bar u^\alpha$.

On the other hand, if $y_N=0$ and if the Ventcell boundary condition holds, we would have
$$ -\lambda_2+ G(\yb, \sb, p_\e , Y')\geq 0.$$
Here the argument is slightly more complicated since the equation is taken at the point $(\yb', \bar s)$. We use \hyp{cont} for $G$
which implies
$$ G(\yb',\sb, p_\e , Y')-G(\xb', \bar t, p_\e ,X')\leq $$
$$\omega\left((|\xb'-\yb'|+|\tb-\sb])|p_\e|+
\Cea\left(\frac{|\xb'-\yb'|^2}{\e^2}+\frac{|\tb-\sb|^2}{\e^2} \right)\right)=\theta(\e),$$
and the right-hand side tends to $0$ when $\e$ tends to $0$. Therefore
\begin{align*}
-\lambda_2+ G(\yb, \sb, p_\e , Y')=& G(\yb, \sb, p_\e , Y') -G(\xb', \bar t, p_\e ,\Ddxp  \bar u^\alpha
(0,0))+ \eta/2- \frac{2x_N}{\beta^2}  \\
\leq \theta(\e) + \eta/2- \frac{2x_N}{\beta^2},
\end{align*}
and if $\beta, \e$ are small enough, the quantity in the right-hand side is strictly negative. So,
for $\bar v_\alpha$, the Ventcell condition cannot hold neither and therefore we have the
$F$-inequality also for $\bar v_\alpha$.

Having the $F$-inequalities for both $\bar u^\alpha$ and $\bar v_\alpha$, we conclude easily by
letting first $\beta \to 0$, using the normal coercivity to control the $x_N$-derivatives. 
Then we let $\e\to0$ (and of course, $\alpha=o(\e)$ too), and we get 
a contradiction by standard arguments using $F$. The proof is then complete.  

%%%%%%%%%%%%%%%%%%%%%%%%%%%%%%%%%%%%%%%%%%%%%%%%%%%%%%%%%%%%%%%%%%%%%%%%%%%%%%%%%%
\section{Proof of \LCR in the Half-Space Case in the Second-Order Case}
\label{case2}
%%%%%%%%%%%%%%%%%%%%%%%%%%%%%%%%%%%%%%%%%%%%%%%%%%%%%%%%%%%%%%%%%%%%%%%%%%%%%%%%%%

For second-order equations, the strategy is exactly the same and we are not going to repeat all the details here.
{ Again $u, v$ denotes an \usc strict subsolution and a \lsc supersolution of \eqref{Eqn}-\eqref{Vbc-HS} in
$Q_{\tilde x ,\tilde t}^{\bar r,\bar h}$ respectively.}

But the first step has to be done differently since, in the first-order case, we reduce to the case when the
maximum point satisfies $x_N=y_N=0$ by a combination of normal coercivity and use of Ventcell boundary 
condition. Here, on the contrary, we only use the normal ellipticity of $F$.

We start by assuming that 
\begin{equation}\label{max-bord}
M:=\max_{
    \overline{Q_{\tilde x ,\tilde t}^{\bar r,\bar h}}}(u-v)_+ > 
\max_{\partial_p {Q_{\tilde x ,\tilde t}^{\bar r,\bar h}} }(u-v)_+
\end{equation}
and we denote by $(\xb,\tb)\in Q_{\tilde x ,\tilde t}^{\bar r,\bar h}$ a point where $M>0$ is
attained.

\medskip

\noindent\textbf{(a)} \textit{Forcing the maximum to be attained at the boundary.}\\[2mm]
For $\tau\in \R$, we set
\[ \varphi(\tau):=\tau- \frac{\tau^2}{2} ,\]
and, in $\mathcal{E}_\e:=\overline{Q_{\tilde x ,\tilde t}^{\bar r,\bar h}}\times \overline{Q_{\tilde x ,\tilde t}^{\bar
r,\bar h}} \cap \{|x_N-y_N|\leq \e\}$, we introduce the function 
\[ \Psi_{\e,L}(x,t,y,s):=u(x,t)-v(y,s)-\frac{|x'-y'|^2}{\e^2}-\frac{|t-s|^2}{\e^2}-L\varphi \left(\frac{|x_N-y_N|}{\e}\right) ,
\]
where the parameters $\e>0$ and $L>0$ are going to be chosen small enough and large enough
respectively. We denote by $(\xb,\tb,\yb,\sb)$ a point of maximum of $\Psi_{\e,L}$ in
$\mathcal{E}_\e$, dropping the dependence in $\e$ and $L$ for simplicity of notations.

Notice that this penalization procedure is not as standard as usual
and the following result replaces Step~\textbf{(a)} from the first-order case

\begin{lemma}\label{lem:pen.eps}
    For $\e>0$ small enough and $L>0$ large enough (but independent of $\e$), the maximum point $(\xb,\tb,\yb,\sb)$ satisfies
    $\bar x_N=\bar y_N=0$.
\end{lemma}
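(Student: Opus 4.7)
The plan is to argue by contradiction, assuming that $\bar x_N > 0$ or $\bar y_N > 0$, and to derive a contradiction by combining the Crandall--Ishii lemma with the normal strong ellipticity hypothesis \hyp{NSE}. The scheme mirrors Step~(a) of the first-order proof in Section~\ref{case1}, with normal coercivity replaced by second-order normal ellipticity. The key structural feature of the penalization is that $\varphi''\equiv -1$, so that wherever $L\varphi(|x_N-y_N|/\e)$ is smooth, its Hessian contribution in the $e_N\otimes e_N$ direction is the large negative quantity $-L/\e^2$, which is precisely what \hyp{NSE} turns into a quantitatively large lower bound on $F$.

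I would first treat the generic regime $\bar x_N\neq\bar y_N$, say $\bar x_N>\bar y_N\ge 0$ (the opposite case being handled via $v$'s supersolution inequality). The penalization is smooth in a neighborhood of the maximum point, and the Crandall--Ishii lemma produces $(p_x,a_\e,X)\in\bar J^{2,+}u(\bar x,\bar t)$ and $(p_y,a_\e,Y)\in\bar J^{2,-}v(\bar y,\bar s)$ together with the usual matrix inequality (in the spirit of \eqref{ineqmat} augmented by the $\varphi$-Hessian). Reading off its $e_N$-component yields $\langle Xe_N,e_N\rangle\le -L/\e^2+o(1)$, while the tangential part of $X$ has norm $O(1/\e^2)$ and $|p_x|=O(L/\e)$. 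When $\bar x_N>0$ the interior subsolution inequality $a_\e+F(\bar x,\bar t,p_x,X)\le -\eta$ must hold; writing $X=M+\lambda e_N\otimes e_N$ with $\lambda\le -L/\e^2<0$ and $\|M\|=O(1/\e^2)$, \hyp{NSE} yields
\[F(\bar x,\bar t,p_x,X)\ \ge\ -\bar\eta\lambda - \bar C\bigl(1+|p_x|+\|M\|\bigr)\ \ge\ \frac{\bar\eta L}{\e^2} - \bar C\Bigl(1+\tfrac{L}{\e}+\tfrac{1}{\e^2}\Bigr),\]
so that for $L$ fixed large enough (independently of $\e$) with $\bar\eta L > 2\bar C$, the leading $L/\e^2$ term blows up as $\e\to 0$, contradicting the subsolution upper bound. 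If $\bar x_N=0$ while $\bar y_N>0$, one must additionally rule out the Ventcell alternative in the subsolution formulation, but $-\partial_{x_N}\phi^{sub}=L\varphi'(\bar y_N/\e)/\e$ is of order $L/\e$ and dominates the bounded $G$-term for $L$ large, which forces the $F$-inequality and hence the same contradiction. The symmetric situation $\bar y_N>\bar x_N\ge 0$ is handled analogously using $Y$ (with $\langle Ye_N,e_N\rangle\ge L/\e^2-o(1)$) and the opposite clause of \hyp{NSE} combined with $v$'s supersolution inequality.

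The main obstacle is the remaining case $\bar x_N=\bar y_N>0$, where $L\varphi(|x_N-y_N|/\e)$ develops a convex kink at the maximum point and the Crandall--Ishii lemma does not apply directly; moreover, a smooth regularization of $|x_N-y_N|$ by $\sqrt{(x_N-y_N)^2+\alpha^2}$ would introduce a positive $e_N\otimes e_N$ Hessian of order $L/(\e\alpha)$ and thereby destroy the negative contribution essential to invoke \hyp{NSE}. I would bypass this by a linear perturbation: replace $\Psi_{\e,L}$ by $\Psi_{\e,L}-\gamma(x_N-y_N)$ for some small $\gamma\in(0,L/\e)$. Since $\varphi'(0^+)=1$ endows the unperturbed penalty with a one-sided slope of magnitude $L/\e$ in each of the $\pm e_N$ directions, the maximum property of $\Psi_{\e,L}$ forces the perturbed maximum to satisfy $\bar x^\gamma_N<\bar y^\gamma_N$, returning us to the smooth regime analyzed above. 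The perturbation being linear, the Hessian information is unchanged and the \hyp{NSE}-driven contradiction persists uniformly for $\gamma>0$ small, completing the argument.
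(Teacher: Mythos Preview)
Your treatment of the case $\bar x_N\neq\bar y_N$ is essentially correct and matches the paper: the penalty is smooth there, its Hessian contributes $-L\e^{-2}$ in the $e_N\otimes e_N$ direction, and \hyp{NSE} turns this into a lower bound on $F$ of order $\bar\eta L\e^{-2}$ which beats the $O(\e^{-2})$ remainder once $L>\bar C/\bar\eta$, independently of~$\e$. (You should also note, as the paper does, that $|\bar x_N-\bar y_N|=\e$ is excluded simply because $L\varphi(1)=L/2$ is large; this is implicit in your setup.)

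The gap is in your handling of $\bar x_N=\bar y_N>0$. The linear perturbation $-\gamma(x_N-y_N)$ does \emph{not} force the perturbed maximum off the diagonal. Near $z:=x_N-y_N=0$ the penalty behaves like $\tfrac{L}{\e}|z|$, whose one-sided slopes at $0$ are $\pm L/\e$. After adding $\gamma z$ with $0<\gamma<L/\e$, the one-sided slopes become $L/\e+\gamma>0$ and $-(L/\e-\gamma)<0$: the kink is still a strict local minimum of the penalty, so it can absorb the tilt. Concretely, if $u$ and $v$ were constants, the maximum of $-L\varphi(|z|/\e)-\gamma z$ over $|z|\le\e$ is attained at $z=0$ for every $\gamma<L/\e$. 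Thus there is no reason why the perturbed maximum should satisfy $\bar x_N^\gamma<\bar y_N^\gamma$, you never reach the smooth regime, and the \hyp{NSE} argument cannot be invoked. You correctly diagnose that the regularization $\sqrt{(x_N-y_N)^2+\alpha^2}$ used in the first-order proof destroys the good Hessian sign; unfortunately your proposed replacement does not work either.

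The paper's device is different: after making $(\bar x,\bar y,\bar t,\bar s)$ a strict maximum of $\Psi_{\e,L}$, it replaces the scale $\e$ in the $\varphi$-term by a parameter $\alpha\ge\e$ and lets $\bar\alpha$ be the supremum of those $\alpha$ for which the maximum of $\Psi_{\e,L,\alpha}$ on $\mathcal{E}_\e$ still lies on $\{x_N=y_N\}$. If $\bar\alpha=+\infty$ the $\varphi$-term can be dropped altogether and the classical comparison argument (smooth doubling in all variables) applies at $(\bar x,\bar y,\bar t,\bar s)$. If $\bar\alpha<+\infty$, then either for $\alpha\downarrow\bar\alpha$ (strict case) or already at $\alpha=\bar\alpha$ (non-strict case) there are maxima with $x_N\neq y_N$ converging to a point with $x_N=y_N>0$; on those the penalty is smooth and the \hyp{NSE} contradiction from your first step applies.
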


\begin{proof}
    We proceed in three steps as follows.

    \noindent \textbf{1.}
    Notice first if $L$ is chosen large enough---with a size depending only on $u$ and $v$---, 
    the maximum of function $\Psi_{\e,L}$ cannot be achieved for $|x_N-y_N|= \e$. 
    More precisely, take $L$ such that
    \[ \frac{L}{2} \geq \max_{\overline{Q_{\tilde x ,\tilde t}^{\bar r,\bar h}}} u 
    -  \min_{\overline{Q_{\tilde y ,\tilde s}^{\bar r,\bar h}}} v\,. \]
    If $|\bar x_N-\bar y_N|=\eps$, the value of the $L\varphi$-term is $L/2$, implying the
    inequality  $\Psi_{\e,L}(\bar x,\bar t,\bar y,\bar s)\leq0$. But on the other hand,
    $\max(\Psi_{\e,L})\geq\Psi_{\e,L}(\bar x,\bar x,\bar t,\bar t)=M>0$, which yields a
    contradiction.

    Moreover, \eqref{max-bord} implies that $(\xb,\tb,\yb,\sb)$
    is necessarily in $Q_{\tilde x ,\tilde t}^{\bar r,\bar h}\times Q_{\tilde x ,\tilde t}^{\bar r,\bar h}$ for $\e$ small enough.
   
    \noindent \textbf{2.}
    A second remark is that, if $\bar x_N\neq \bar y_N$, the $\varphi$-term becomes smooth at these
    points. Hence, for instance if $\bar x_N>0$, we can use 
    \[(x,t)\mapsto v(\bar y,\bar s)+\frac{|x'-\bar y'|^2}{\e^2}+\frac{|t-\bar s|^2}{\e^2}+L\varphi
    \left(\frac{|x_N-\bar y_N|}{\e}\right)\]
    as a test-function in the inside equation for $u$ at $(\xb,\tb)$, which yields
    \[a_\e+F\Big(\xb,\tb,p_\e\pm\frac{L}{\e}e_N,\frac{2}{\e^2}\Id-\frac{L}{\e^2}e_N\otimes e_N\Big)\leq0\,.\] 
    But this contradicts the ellipticity of $F$ for $L$ large enough, its size depending only on the properties of $F$. Similarly, we also reach a
    contradiction if $\bar y_N>0$ by using the supersolution inequality for $v$, involving the $+L\e^{-2}\,
    e_N\otimes e_N$ term in $F$.

    \noindent \textbf{3.} At this stage, we are left with proving that $\bar x_N=\bar y_N>0$ cannot
    occur which is not as simple as in the first-order case. 
    We first notice that, by usual arguments, we can assume w.l.o.g that $(\xb,\tb,\yb,\sb)$ is a strict
    maximum point by subtracting $|x-\bar x|^4+|y-\bar y|^4+|t-\bar t|^4+|s-\bar s|^4$ to the
    function $\Psi_{\e,L}$---we keep the same notation for this new function. 

    Then, we denote by $\Psi_{\e,L,\alpha}$ the function which is the same as $\Psi_{\e,L}$ except
    that we replace $\e$ by $\alpha$ in the $\varphi$-term, more precisely
    \[ \Psi_{\e,L,\alpha}(x,t,y,s):=u(x,t)-v(y,s)-\frac{|x'-y'|^2}{\e^2}-\frac{|t-s|^2}{\e^2}-L\varphi
    \left(\frac{|x_N-y_N|}{\alpha}\right). \]
Notice however that we keep considering the maximum of $\Psi_{\e,L,\alpha}$ in $\mathcal{E}_\e$\
    i.e. the domain remains independent of $\alpha$.
    
    We first remark that, for $\alpha >\e$, as long as the maximum point of $\Psi_{\e,L,\alpha}$ in
    $\mathcal{E}_\e$ satisfies $x_N=y_N$, then this point is necessarily $(\bar x,\bar y,\bar t,\bar
    s)$. Indeed, this derives from the fact that $\Psi_{\e,L,\alpha}(x,t,y,s)=\Psi_{\e,L}(x,t,y,s)$ if $x_N=y_N$ and
$(\bar x,\bar y,\bar t,\bar s)$ is the only maximum point of $\Psi_{\e,L}(x,t,y,s)$ with the constraint $x_N=y_N$.

    Next, we define $\bar \alpha$ as the supremum of all $\alpha\geq \e$ such that the maximum of
    $\Psi_{\e,L,\alpha}$ in $\mathcal{E}_\e$ is still achieved for $x_N= y_N>0$, $i.e.$ for which
    $(\bar x,\bar y,\bar t,\bar s)$ is still a maximum point. We face several cases:
    \begin{enumerate}
\item[$(i)$] If $\bar \alpha = +\infty$, we can drop the $\varphi$-term: $(\xb,\tb,\yb,\sb)$
is a maximum point of the function 
$$(x,t,y,s)\mapsto u(x,t)-v(y,s)-\frac{|x'-y'|^2}{\e^2}-\frac{|t-s|^2}{\e^2},$$
or
$$(x,t,y,s)\mapsto u(x,t)-v(y,s)-\frac{|x-y|^2}{\e^2}-\frac{|t-s|^2}{\e^2},$$
and the classical comparison arguments given in the User's guide \cite{Users} can be performed,
remarking that \hyp{Cont} is even stronger than the structure conditions needed in \cite{Users}.
And they lead to a contradiction. 
    \item[$(ii)$] If $\bar \alpha <+\infty$ we distinguish two sub-cases:
    \begin{enumerate}
        \item[$(ii)$-$(a)$] If $(\bar x,\bar y,\bar t,\bar s)$ is a {\em strict} maximum point of
            $\Psi_{\e,L,\bar \alpha}$, then, for any $\alpha > \bar \alpha$, there is a maximum point
            $(\bar x_\alpha,\bar y_\alpha,\bar t_\alpha,\bar s_\alpha)$ such that $(x_\alpha)_N\neq
            (y_\alpha)_N$ and the sequence $(\bar x_\alpha,\bar y_\alpha,\bar t_\alpha,\bar
            s_\alpha)$ converges to the strict maximum point $(\bar x,\bar y,\bar t,\bar s)$ as
            $\alpha \to \bar \alpha$. In this case, we can apply the comparison arguments of \cite{Users}
            for the points $(\bar x_\alpha,\bar y_\alpha,\bar t_\alpha,\bar
            s_\alpha)$ since the $\varphi$-term is smooth if $x_N\neq y_N$, see Step~2. above.
            Taking $\alpha$ close enough to $\bar \alpha$, we can conclude for $\e$ small enough
            since 
            $$\frac{|\bar x_\alpha'-\bar y_\alpha'|^2}{\e^2}+\frac{|\bar t_\alpha-\bar s_\alpha|^2}{\e^2}=o_\e(1) + o_\alpha(1) \quad \hbox{and}\quad (\bar x_\alpha)_N-(\bar y_\alpha)_N=o_\alpha(1) 
            .$$
       
        \item[$(ii)$-$(b)$] If $(\bar x,\bar y,\bar t,\bar s)$ is NOT a {\em strict} maximum point of
            $\Psi_{\e,L,\bar \alpha}$, this means that there exists a sequence $(\bar x_k,\bar
            y_k,\bar t_k,\bar s_k)$ of maximum points of $\Psi_{\e,L,\bar \alpha}$ which converges
            to $(\bar x_\balpha,\bar y_\balpha,\bar t_\balpha,\bar s_\balpha)$ and such that
            $(x_k)_N\neq (y_k)_N$. Indeed, we cannot have $(x_k)_N=(y_k)_N$ (as $k\to\infty$)
            since $(\bar x_\balpha,\bar y_\balpha,\bar t_\balpha,\bar s_\balpha)$ is a {\em strict}
            maximum point of $\Psi_{\e,L,\bar \alpha}=\Psi_{\e,L}$ with the constraint $x_N=y_N$.
            And we conclude as in the previous case, by using the comparison arguments for the maximum point
            $(\xb_k,\tb_k,\yb_k,\sb_k)$.
    \end{enumerate}
\end{enumerate}

%OU BIEN
%
%We define $\bar \alpha$ as the supremum of all $\alpha\geq \e$ such that the maximum of $\Psi_{\e,L,\alpha}$ in $\mathcal{E}_\alpha:=\overline{Q_{\tilde x ,\tilde t}^{\bar r,\bar h}}\times \overline{Q_{\tilde x ,\tilde t}^{\bar
%r,\bar h}} \cap \{|x_N-y_N|\leq \alpha\}$ is still achieved for $x_N= y_N>0$. We have to examine several cases
%\begin{enumerate}
%\item[(i)] For $\bar \alpha<+\infty$, $\Psi_{\e,L,\bar \alpha}$ has a maximum point such that $x_N= y_N=0$. We are done.
%\item[(ii)] For $\bar \alpha<+\infty$, $\Psi_{\e,L,\bar \alpha}$ has a maximum point such that $x_N= y_N>0$: we have a contradiction arguing with $\alpha >\bar \alpha$ close to $\bar \alpha$ since $\Psi_{\e,L,\bar \alpha}$ has a maximum point such that $x_N\neq y_N$ but with $x_N,y_N>0$.
%\item[(iii)] One of the maximum points $x$ or $y$ is on $\partial{Q_{\tilde x ,\tilde t}^{\bar r,\bar h}}$ which leads to an easy contradiction.
%\item[(iv)]  If $\bar \alpha=+\infty$, this leads to argue without the $L\varphi$-term and there is no problem anymore.
%\end{enumerate}
In any case, we reach a contradiction when the maximum point $(\xb,\tb,\yb,\sb)$ satisfies $\bar
x_N=\bar y_N>0$, so that we can assume w.l.o.g. that $\Psi_{\e,L}$ has a maximum point such that 
$x_N=y_N=0$.
\end{proof}

\begin{remark}
    In the proof of Lemma~\ref{lem:pen.eps}, even if this may not be completely crucial, we benefit
    from the same doubling of variables in $x'$ and $t$ since it simplifies matter, at least. This
    is where the local Lipschitz continuity in $t$ plays a role, \cf Remark~\ref{rem:Q.lip}.
\end{remark}

\medskip

\noindent\textbf{(b)} \textit{The twin blow-up argument.}\\[2mm]
After this first step, we perform the twin blow-up argument as in the first-order case, see
\eqref{def.double.bup}. Of course, since $F$ now depends on the second-derivatives, the equation
inside the domain involves more terms than in \eqref{eq.double.bup}, but we are not going to write
them here since passing to the limit yields a simple formulation in the end---see below. 

In order to reduce to the case when $u_\delta$ and $v_\delta$ are bounded as in the first-order case, we use sub- and
supersolutions of the form
\[ \psi^\pm(x,t):=\pm K_1(1-x_N-K_2x_N^2) ,\]
$\psi^-$ being the subsolution and $\psi^+$ the supersolution. The $K_1$-constant is used to take care of the Ventcell
boundary condition, while the $K_2$-one is used for the equation, using the ellipticity of $F$ in the normal direction. Both constants depend on $\e$ (but not on $\delta$) and we consider these sub and supersolutions only in a small
neighborhood of the boundary, \ie for $x_N$ small.

As in the first-order case, we define $u_\delta, v_\delta$ and
$$ \tilde u_\delta:= \max(u_\delta,\psi^-) \quad \hbox{and}\quad \tilde v_\delta:= \min(v_\delta,\psi^+)\; ,$$ 
which are now bounded sub and supersolutions. 

At this stage, we use the ``sup-convolution trick'' of Forcadel, Imbert and Monneau \cite{FIM} which allows us to pass to the limit through the
half-relaxed limits method both in the maximum point property and in the viscosity solutions inequalities using that $\varphi(s) \leq s$ for any $s$: 
taking $\limssup$ and $\limiinf$ along 
subsequences, we set $\tilde u=\limssup \tilde u_\delta$, $\bar u=\limssup u_\delta$, $\tilde v=\limiinf \tilde v_\delta$, $\bar v=\limiinf v_\delta$ and 
we have
$$ \tilde u:= \max(\bar u,\psi^-) \quad \hbox{and}\quad \tilde v_\delta:= \min(\bar v,\psi^+)\; .$$

\smallskip

\noindent\textbf{(c)} \textit{The limit problem.}\\[2mm]
Using \hyp{NSE} for the equation associated to $u_\delta, v_\delta$, it is clear enough that the leading terms are
the $\delta^{-2}(u_\delta)_{x_Nx_N}, \delta^{-2}(v_\delta)_{x_Nx_N}$-ones and, dropping the $\bar\eta>0$,
the limit problem for $\tilde u$ and $\tilde v$ is now
\begin{equation}\label{conv-conc}
-\tilde u_{x_Nx_N} \leq 0\leq -\tilde v_{x_Nx_N} \quad \hbox{if $ x_N>0$ and $ y_N>0$
respectively}\,.
\end{equation}
Notice that the strict subsolution property is lost in the limit here. On the boundary, we get
\[ \min(-\tilde u_{x_Nx_N}, -\tilde u_{x_N}+G(\xb',\tb, \pe ,\Ddxp   \tilde u)+\eta) \leq  0 ,\]
\[ \max(-\tilde v_{x_Nx_N}, -\tilde v_{x_N}+G(\yb',\sb, \pe ,\Ddxp   \tilde v)) \geq 0 ,\]
but using the uniform ellipticity in the normal direction of the equation inside the domain together with Proposition~\ref{VBC-uec}, these
relaxed boundary conditions reduce to
\[  -\tilde u_{x_N}+G(\xb',\tb, \pe ,\Ddxp   \tilde u)+\eta \leq  0 ,\]
\[  -\tilde v_{x_N}+G(\yb',\sb, \pe ,\Ddxp   \tilde v) \geq 0 .\]

Moreover, $\tilde u,\tilde v$ satisfy $\tilde u(0,0)=\tilde v(0,0)=0$ and since $\varphi(s)\leq s$,
\begin{equation}\label{ineq-quad}
\tilde u(x,t)-\tilde v(y,s)-\frac{|x'-y'|^2}{\e^2}-\frac{|t-s|^2}{\e^2}-\frac{L}{\e} |x_N-y_N|\leq 0.
\end{equation}

The difference in the second-order case is that the Lipschitz continuity of the subsolution in a
neighborhood of the boundary is not given for free and we are not sure that complete reduced sub-
and superjets do exist when tangential ones exist.

\medskip

\noindent\textbf{(d)} \textit{Regularization of $\tilde u,\tilde v$ and properties of the
regularized functions.}\\[2mm] { As in the first-order case, we perform a sup-convolution in
$(x',t)$ to $\tilde u$ and an inf-convolution in $(y', s)$ to $\tilde v$, keeping the same notations
(cf. \eqref{sup-conv}-\eqref{inf-conv}).
%
%\[ \tilde u^\alpha (x,t):= \sup_{(z',\tau)}\left(\tilde
%u((z',x_N),\tau)-\frac{|x'-z'|^2}{\alpha^2}-\frac{|t-\tau |^2}{\alpha^2}\right) ,\] \[ \tilde
%v_\alpha (y,s):= \inf_{(z',\tau)}\left(\tilde v((z',y_N),\tau)+\frac{|y'-\tilde
%z'|^2}{\alpha^2}+\frac{|s- \tau |^2}{\alpha^2}\right) ,\] for $0<\alpha \ll \e$. 
For $0<\alpha \ll \e$, we have the following\\ (i) $\tilde u^\alpha=\max(\bar
u^\alpha,(\psi^-)^\alpha)$, $\tilde v_\alpha=\min(\bar v_\alpha,(\psi^+)_\alpha)$.\\ (ii) Since
$(\psi^-)^\alpha=-K_1$ and $(\psi^+)_\alpha=+K_1$ if $x_N=0$, we have $\tilde u^\alpha (0,0)=\tilde
v_\alpha (0,0)=0$ and $\tilde u^\alpha =\bar u^\alpha $, $\tilde v_\alpha = \bar v_\alpha$ if
$x_N=0$ in a neighborhood of $(0,0)$.\\ (iii) $\tilde u^\alpha, \tilde v_\alpha$ satisfy the same
viscosity sub and supersolution inequalities as $\tilde u, \tilde v$ respectively: in fact, these
sup- and inf-convolution procedures do not present any technical difficulty since the nonlinearities
involved in the limiting problem do not depend neither on $x'$ nor on $t$.\\ (iv) Because $\tilde
u^\alpha, \tilde v_\alpha$ satisfy \eqref{conv-conc}, the functions $x_N\mapsto \tilde
u^\alpha((x',x_N),t)$ and $x_N\mapsto \tilde v_\alpha((x',x_N),t)$ are respectively convex and
concave for all $x',t$ in a neighborhood of $(0,0)$.

The next step consists in proving that  $\tilde u^\alpha,\tilde v_\alpha$ are continuous---and in
particular \wrt $x_N$---at any point $((x',0),t)$ in a neighborhood of $(0,0)$. We do it at $(0,0)$,
the proof being similar for the other points.

First, we apply Proposition~\ref{regul} to $\tilde u^\alpha, \tilde v_\alpha$, which yields
\begin{equation}\label{conseq-prop33} \tilde u^\alpha(0,0) =\limsup_{\substack{(x,t) \to (0,0) \\
    x_N>0}}\, \tilde u^\alpha(x,t), \qquad \tilde v_\alpha(0,0) =\liminf_{\substack{(x,t) \to (0,0)
    \\ x_N>0}}\, \tilde v_\alpha(x,t).  \end{equation} Then, we claim that $$ \tilde u^\alpha(0,0)
    =\lim_{\substack{(x,t) \to (0,0) \\ x_N>0}}\, \tilde u^\alpha(x,t), \qquad \tilde v_\alpha(0,0)
    =\lim_{\substack{(x,t) \to (0,0) \\ x_N>0}}\, \tilde v_\alpha (x,t).$$ Indeed, if $$ \tilde
    u^\alpha(0,0) >\underline l:= \liminf_{\substack{(x,t) \to (0,0)\\ x_N>0}}\, \tilde
    u^\alpha(x,t), $$ we can use a sequence $(x^k,t^k)_k$ converging to 0 such that $ \tilde
    u^\alpha(x^k,t^k) \to \underline l$.  Using the convexity of $\tilde u^\alpha$ implies that, if
    we fix some $\bar x_N>0$ and if we consider $x_N$ such that $\bar x_N>x_N>x_N^k$, we have $$
    \tilde u^\alpha(((x')^k,x_N),t^k) \leq \alpha^k \tilde u^\alpha(((x')^k,x_N^k),t^k)+
    (1-\alpha^k)\tilde u^\alpha(((x')^k,\bar x_N),t^k)\; ,$$ with $\alpha^k=(\bar x_N-x_N)(\bar
    x_N-x_N^k)^{-1}$. Letting $k \to +\infty$, we obtain \begin{equation}\label{wrongliminf} \tilde
        u^\alpha((0,x_N),0) \leq (\bar x_N-x_N)(\bar x_N)^{-1} \underline l + x_N(\bar
        x_N)^{-1}\tilde u^\alpha((0,\bar x_N),0)\; .  \end{equation} But $\tilde u^\alpha$ is
        uniformly Lipschitz continuous \wrt $x',t$ and, by \eqref{conseq-prop33}, we have $$\tilde
        u^\alpha(0,0)=\limsup_{\substack{x_N \to 0\\ x_N>0}}\tilde u^\alpha((0,x_N),0),$$ while
        \eqref{wrongliminf} yields $$\tilde u^\alpha(0,0)=\limsup_{\substack{x_N \to 0\\
        x_N>0}}\tilde u^\alpha((0,x_N),0) \leq \underline l < \tilde u^\alpha(0,0)\,,$$ which is a
        contradiction. Hence the claim is proved for $\tilde u^\alpha$ and a similar argument gives
        the same result for $\tilde v_\alpha$. And clearly all the above arguments are valid for any
        point of the form $(x',0,t))$ close enough to $(0,0)$.

Since $ \tilde u^\alpha,\tilde v_\alpha$ are both continuous at $(0,0)$, $\tilde u^\alpha=\bar
u^\alpha$, $\tilde v_\alpha=\bar v^\alpha$ in a neighborhood of $(0,0)$ since $(\psi^-)^\alpha=-K_1$
and $(\psi^+)_\alpha=+K_1$ if $x_N=0$ and these functions are bounded in a neighborhood of $(0,0)$.

Moreover \begin{equation}\label{keyineqbis2} \bar u^\alpha (x,t)-\bar v_\alpha (y,s) \leq \Cea
\left(\frac{|x'-y'|^2}{\e^2}+\frac{|t-s|^2}{\e^2}\right)+ \frac{L}{\e}|x_N- y_N|.  \end{equation}

Hence $(0,0,0,0)$ is still a maximum point of \[\bar u^\alpha (x,t)-\bar v_\alpha (y,s)-
\Cea\left(\frac{|x'-y'|^2}{\e^2}+\frac{|t-s|^2}{\e^2}\right)- \frac{L}{\e}|x_N- y_N|.\] As usual, we
may even assume that $(0,0,0,0)$ is a {\em strict} maximum point of this function by adding suitable
(small) terms.

 \medskip

\noindent\textbf{(e)} \textit{Adapting the Ishii-Jensen Lemma.}\\[2mm] Now, for $q\in \R^{2N}$ close
to $0$, we consider the functions 
 \[\bar u^\alpha (x,t)-\bar v_\alpha (y,s)-
\Cea\left(\frac{|x'-y'|^2}{\e^2}+\frac{|t-s|^2}{\e^2}\right)-L\varphi\left(\frac{|x_N-y_N|}{\e}\right)-q\cdot
(x',t,y',s).\]         Arguing as in the first step---this is even easier here---, all these functions
achieve their maximum at points such that $x_N=y_N=0$. Then, by combining Theorem~A.2 and Lemma~A.5
in \cite{Users} in the tangent variables, there exists a sequence $(q_k)_k$ of points in $\R^{2N}$
such that each function 
 \[\bar u^\alpha (x,t)-\bar v_\alpha (y,s)-
\Cea\left(\frac{|x'-y'|^2}{\e^2}+\frac{|t-s|^2}{\e^2}\right)-
L\varphi\left(\frac{|x_N-y_N|}{\e}\right)-q_k\cdot (x',t,y',s)\] 
has a maximum point at
$((x'_k,0),t_k,(y'_k,0),s_k)$ where $\bar u^\alpha,\bar v_\alpha$ are twice differentiable \wrt the
$x'$-variable.

At these points, we have full reduced super and subjets for $\bar u^\alpha,\bar v_\alpha$
respectively (we recall that these were defined in Section~\ref{sec:jets}), because of this maximum
point property.

We denote by $X_k'=\Ddxp   \bar u^\alpha ((x'_k,0),t_k)$ and $Y_k'= \Ddxp   \bar v_\alpha ((y'_k,0),s_k)$; they satisfy
\begin{equation}\label{ineqmat3}
\left[\begin{array}{cc}X_k'&0\\0&-Y_k'\end{array}\right]
 \le \frac 2 {\e^2} \Cea \left[\begin{array}{cc}I&-I\\-I&I\end{array}\right].
\end{equation}
Taking into account the fact that the boundary condition just depends on the first
derivative \wrt $x_N$ and the second derivative in $x'$, as we already mentioned in
Section~\ref{sec:jets} we reduce the semijets by dropping the $p_x$ and $p_t$-terms, keeping only
the $p_{x_N}$ and $M$-ones. 

Following Proposition~\ref{prop:jets} on the structure of semijets, we introduce
the sets $\Lambda^+ (u):=\{\lambda_1: (\lambda_1,X_k')\in \PP^{2,+}_r \bar u^\alpha
((x'_k,0),t_k)\}$ and $\Lambda^-(v):=\{\lambda_2:(\lambda_2,Y_k')\in \PP^{2,-}_r \bar v_\alpha
((y'_k,0),s_k)\}$. This proposition proves that if $\overline \lambda^k_1:=\inf  \Lambda^+ (u)$,
$\underline\lambda^k_2:=\sup \Lambda^- (v)$, then
\begin{enumerate}
    \item[$(i)$] $(\lambda_1,X_k')\in \PP^{2,+}_r \bar u^\alpha  ((x'_k,0),t_k)$ if $\lambda_1 > \overline \lambda^k_1$,
    \item[$(ii)$] $(\lambda_2,Y_k')\in \PP^{2,-}_r \bar v_\alpha ((y'_k,0),s_k)$ if $\lambda_2 > \underline\lambda^k_2$.
\end{enumerate}

Because of the uniform ellipticity in $x_N$ of the equation in the domain, the boundary condition is
satisfied in a strong sense---see Proposition~\ref{VBC-uec}---and we have
\begin{equation}\label{ineq.bdry.approx}
    \begin{cases} -\lambda_1 +G\big(x'_k,t_k, \pe ,X_k'\big)\leq  -\eta
        \quad \hbox{for any   } \lambda_1 \geq \overline \lambda^k_1\,,\\
 -\lambda_2+G\big(y'_k,s_k, \pe ,Y_k'\big) \geq 0 \quad \hbox{for any   }
        \lambda_2 \leq \underline \lambda^k_2\,,
    \end{cases}\end{equation}
And therefore $\overline \lambda^k_1, \underline\lambda^k_2$ are necessarily finite.

Now we use several estimates: on one hand, we know that
$$
\frac{|\bar x'-\bar y'|^2}{\e^2}+\frac{|\bar t-\bar s|^2}{\e^2}\to 0 \quad \hbox{as  }\e \to 0.
$$
and, on the other hand,
$$(x'_k,t_k)-(\bar x',\tb), (y'_k,s_k)-(\bar y',\sb)=o_\alpha(1)+o_k(1)\; .$$
Using \hyp{Cont}, this yields
$$G\big(x'_k,t_k, \pe ,X_k'\big) - G\big(y'_k,s_k, \pe ,Y_k'\big) \geq o_\e(1) + o_\alpha^{(\e)}(1) +o_k^{(\e)}(1)\; ,$$
where $o_\alpha^{(\e)}(1) +o_k^{(\e)}(1) \to 0$ if $\alpha \to 0, k\to \infty$ with a fixed $\e$ and $o_\e(1)\to 0$ when $\e \to 0$.
The above inequalities lead to
\[ o_\e(1) + o_\alpha^{(\e)}(1) +o_k^{(\e)}(1) \leq \overline \lambda^k_1 - \underline
    \lambda^k_2-\eta .\]
At this point, we want to make precise our use of the parameters $\e, \alpha$ and $k$: we first choose $\e$ in order to have
the above $o_\e(1)$ to be less that, say, $\eta/4$, then we have to choose $\alpha$ small enough and $k$ large enough
compared to $\e$.

With this choice of the parameters, we can assume without loss of generality
that $\overline \lambda^k_1 - \underline \lambda^k_2 \geq \eta/2$.}

\medskip

\noindent\textbf{(f)} \textit{Getting a contradiction.}\\[2mm]
We first recall that $\overline \lambda^k_1$ is the infimum of the $\lambda_1$ such that $(\lambda_1, X_k')$ is in the
reduced superjet of $\bar u^\alpha$ while $\underline \lambda^k_2$ is the supremum of $\lambda_2$ such
    that $(\lambda_2, Y_k')$ is in the reduced subjet of $\bar v_\alpha$. We use this information below in a crucial way.

Then, we notice that the functions $x_N\mapsto \bar u^\alpha(x',x_N,t)$ are convex for any $x',t$ close to $(0,0)$ and, in the
same way, the functions $x_N\mapsto \bar v_\alpha (x',x_N,t)$ are concave for any $x',t$ close to $(0,0)$---hence close to $((x'_k,0),t_k)$ and $((y'_k,0),s_k)$ respectively. Therefore, since these functions are bounded, they are locally Lipschitz continuous and their derivatives (defined almost everywhere) are non-decreasing and non-increasing respectively.

    Moreover, as we have seen it above, the functions $x_N\mapsto \bar u^\alpha(x',x_N,t)$ and $x_N\mapsto \bar v_\alpha(x',x_N,t)$ are necessarily continuous at $x_N=0$. 
    
%    Indeed, if $\bar u^\alpha(x',x_N,t)\to \gamma(x',t) < \bar u^\alpha(x',0,t)$ for some
%    $x',t$, then the same property holds in a small neighborhood of $(x',0,t)$ and this implies
%    that, in this neighborhood, any $\lambda \in \R$ is in the superdifferential of $\bar u^\alpha$
%    when it is not empty. This is clearly in contradiction with the Ventcell boundary condition because \eqref{Vbc-HS} cannot hold
%    for any $\lambda \in \R$. A slightly different
%    argument consists in using Proposition~\ref{regul}. The same remark
%    also holds for $\bar v_\alpha$ and therefore both functions are continuous \wrt all variables in a neighborhood of
%    $((x'_k,0),t_k)$ and $((y'_k,0),s_k)$ respectively. 

Now we claim that there does not exist a neighborhood $\mathcal{V}$ of $((x'_k,0),t_k)$ such that
$$
    \frac{\partial \bar u^\alpha((x',x_N),t)}{\partial x_N} \leq \overline \lambda^k_1-\eta/8\quad \hbox{for all $(x',t)$, a.e. in $x_N$} ,
$$
if $((x',x_N),t)\in \mathcal{V}\cap (\Omega \times \R)$.

    Indeed otherwise we would have a contradiction with the definition of $\overline \lambda^k_1$. 
    In the same way, the property
$$
\frac{\partial \bar v_\alpha
    ((x',x_N),t)}{\partial x_N} \geq \overline \lambda^k_2+\eta/8\quad \hbox{for all $(x',t)$, a.e. in $x_N$} ,
$$
cannot hold for $((x',x_N),t)\in \mathcal{V}\cap (\Omega \times \R)$, where $\mathcal{V}$ a neighborhood of $((y'_k,0),s_k)$.

    Hence, there exists a sequence $(x^{(p)},t^{(p)})_p$ converging to $((x'_k,0),t_k)$ such that
 $$  \frac{\partial \bar u^\alpha(x^{(p)},t^{(p)})}{\partial x_N} \geq \overline \lambda^k_1-\eta/8 $$
 and using the convexity of $x_N\mapsto \bar u^\alpha(x',x_N,t)$, we have, for all $x_N\geq x^{(p)}_N$
 $$  \bar u^\alpha(((x^{(p)})',x_N),t^{(p)})\geq \bar u^\alpha(x^{(p)},t^{(p)})+ (\overline \lambda^k_1-\eta/8 )(x_N-(x^{(p)})_N)\; .$$
   And, of course, we have a similar inequality for $\bar v_\alpha$ by using its concavity.
   
 We can pass to the limit in these inequalities by using the continuity of  $\bar u^\alpha, \bar v_\alpha$ at $((x'_k,0),t_k)$ and the
 tangential continuity of both functions; we finally obtain
 \[\begin{cases}
        \bar u^\alpha((x'_k,x_N),t_k)-\bar u^\alpha((x'_k,0),t_k) \geq
        (\overline\lambda^k_1-\eta/8)x_N  ,\\
        \bar v_\alpha ((y'_k,x_N),s_k)-\bar v_\alpha ((y'_k,0),s_k)\leq
        (\overline\lambda^k_2+\eta/8)x_N,
    \end{cases}\]
    leading to 
    \[\begin{aligned} 
        \Big[\bar u^\alpha((x'_k,x_N),t_k)-\bar v_\alpha ((y'_k,x_N),s_k)\Big] & - 
        \Big[\bar u^\alpha((x'_k,0),t_k)-\bar v_\alpha ((y'_k,0),s_k)\Big] \\
        & \geq \Big(\overline \lambda^k_1-\overline \lambda^k_2 -\frac{\eta}{4}\Big)x_N \geq \frac{\eta}{4}\,x_N  .
    \end{aligned}
    \]
Letting $k\to \infty$, this yields, by using the tangential continuity of $\bar u^\alpha,\bar v_\alpha$
    \[\bar u^\alpha(0,x_N,0)-\bar v_\alpha(0,x_N,0) \geq \frac{\eta}{4}\, x_N,\]
but \eqref{keyineq} implies $\bar u^\alpha(0,x_N,0)-\bar v_\alpha(0,x_N,0) \leq 0$, and we reach a
contradiction which ends the proof.

%%%%%%%%%%%%%%%%%%%%%%%%%%%%%%%%%%%%%%%%%%%%%%%%%%%%%%%%%%%%%%%%%%%%%%%%%%%%%%%%
\section{Further Results and Open Questions}\label{FROR}

In this last section we gather some comments, open questions and other results
concerning Problem~\eqref{Eqn}-\eqref{Vbc-GC}. 

%--------------------------------------------
\subsection*{Existence via Perron's method}

We first provide an {\em existence result} for Problem~\eqref{Eqn}-\eqref{Vbc-GC} associated to the
initial condition \eqref{init} and to do so we use the assumption

\begin{app}{\hyp{B-Ex}}{Boundedness assumption for existence.}{\smsp
    The functions $x\mapsto u_0(x)$, $(x,t)\mapsto F(x,t,0,0)$ and $(x,t)\mapsto \G(x,t,0,0)$ are
    bounded and continuous on $\Omegb$, $\Omegb \times [0,T]$ and $\domeg \times [0,T]$ respectively.}
\end{app}

The result is the
\begin{proposition}\label{exist} 
    Under the assumptions of Theorem~\ref{comp:GC}, if \hyp{B-Ex} holds, there exists a unique,
    bounded continuous solution to Problem~\eqref{Eqn}-\eqref{Vbc-GC}-\eqref{init}.
\end{proposition}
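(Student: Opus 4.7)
\emph{Strategy.} The plan is to apply Ishii's Perron method~\cite{Is-Per}, using Theorem~\ref{comp:GC} both for uniqueness and to promote the Perron envelope to a continuous solution. The key ingredients I will need are (i) bounded global sub- and supersolutions, (ii) local sub- and supersolutions matching the initial datum at interior points, and (iii) the comparison principle to squeeze the semicontinuous envelopes of the Perron function.

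\emph{Global barriers.} Let $M_0 := \|u_0\|_\infty$ and choose $K > \sup_{\Omegb \times [0,T]}|F(x,t,0,0)|$, both finite by \hyp{B-Ex}. I claim that
\[ u^+(x,t) := M_0 + Kt\,,\qquad u^-(x,t) := -M_0 - Kt \]
are, respectively, a bounded continuous super- and subsolution of~\eqref{Eqn}-\eqref{Vbc-GC}, with $u^-(x,0) \leq u_0(x) \leq u^+(x,0)$ on $\Omegb$. In $\Omega \times (0,T)$, one has $(u^+)_t + F(x,t,0,0) \geq K - \|F(\cdot,\cdot,0,0)\|_\infty > 0$; on $\domeg \times (0,T)$, the relaxed supersolution inequality $\max((u^+)_t + F(\cdot,0,0),\,\G(\cdot,0,0)) \geq 0$ then holds a fortiori, and the case of $u^-$ is symmetric.

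\emph{Perron procedure and matching of the initial datum.} Set
\[ u(x,t) := \sup\{w(x,t) : w \ \usc \text{ subsolution of }\eqref{Eqn}\text{-}\eqref{Vbc-GC},\; u^- \leq w \leq u^+,\; w(\cdot,0) \leq u_0\}\,, \]
so that $u^- \leq u \leq u^+$. The standard Perron lemmas (cf.~\cite{Is-Per,Users}) show that $u^*$ is a subsolution and $u_*$ is a supersolution of~\eqref{Eqn}-\eqref{Vbc-GC}, and $u^*(\cdot,0) \leq u_0$ on $\Omegb$ by construction. For each $\hat x \in \Omega$ and $\eta > 0$, I will choose $\delta > 0$ so small that $\overline{B(\hat x,\delta)} \subset \Omega$ and $|u_0(x)-u_0(\hat x)| < \eta/2$ on $B(\hat x,\delta)$. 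Thanks to \hyp{Gen}-$(i)$, for $A>0$ large enough (depending on $\eta, \delta$ and the Lipschitz constant of $F$) the smooth function
\[ \psi(x,t) := u_0(\hat x) - \eta - |x-\hat x|^2/\eta - A t \]
is a classical subsolution of~\eqref{Eqn} on $B(\hat x,\delta) \times [0,T]$; by choosing $\delta$ further so that $\psi \leq u^-$ outside a slightly smaller ball, the function $\tilde\psi := \max(\psi,u^-)$ (extended by $u^-$ outside $B(\hat x,\delta)$) belongs to the Perron admissible class and satisfies $\tilde\psi(\hat x, 0) \geq u_0(\hat x) - \eta$. Hence $u_*(\hat x,0) \geq u_0(\hat x)$ on $\Omega$, and Proposition~\ref{prop-init} extends this to $\Omegb$; the analogous argument with reversed signs yields $u^*(\cdot,0) \leq u_0$ on $\Omegb$.

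\emph{Conclusion.} Applying Theorem~\ref{comp:GC} to $u^*$ and $u_*$ with the initial ordering $u^*(\cdot,0) \leq u_0 \leq u_*(\cdot,0)$ gives $u^* \leq u_*$ on $\Omegb \times [0,T)$; combined with $u_* \leq u \leq u^*$, this forces $u^* = u_* = u$, so $u$ is continuous and is the desired solution, uniqueness being a direct consequence of Theorem~\ref{comp:GC}. The only delicate point is the construction of the local barriers at the initial time: handling it only at interior points is sufficient because Proposition~\ref{prop-init} automatically propagates the initial inequality up to $\domeg$, thereby sparing us the need to verify compatibility of the local barriers with the Ventcell boundary condition.
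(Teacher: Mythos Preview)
Your approach is essentially correct, but it differs from the paper's in the construction of the global barriers. The paper builds sub- and supersolutions of the form
\[ u^\pm(x,t) = \pm k_1 t \pm k_2\,\varphi(d(x)) + k_3, \]
where the $\varphi(d(x))$-term (via Lemma~\ref{dist-func}) is used to make the Ventcell boundary condition hold \emph{classically}: one chooses $k_2$ large so that $\G(x,t,Du^\pm,D^2_Tu^\pm)$ has the right sign on $\domeg$, then $k_1$ for the equation, then $k_3$ for the initial datum. Your barriers $\pm M_0 \pm Kt$ are simpler and do \emph{not} satisfy the Ventcell condition in the classical sense; they are nonetheless valid viscosity sub/supersolutions because the strict interior inequality $K+F(x,t,0,0)>0$ forces the relaxed boundary condition $\max(\cdot,\cdot)\geq0$ to hold automatically. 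Both routes are legitimate: the paper's choice makes the barrier ``robustly'' compatible with the boundary condition (and reuses machinery already developed in Section~\ref{GVbc-GD}), while yours exploits the viscosity relaxation to sidestep the boundary entirely.

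One minor sloppiness: the claim ``$u^*(\cdot,0)\leq u_0$ on $\Omegb$ by construction'' is not immediate---the pointwise constraint $w(\cdot,0)\leq u_0$ in the Perron class gives only $u(\cdot,0)\leq u_0$, not $u^*(\cdot,0)\leq u_0$. You do supply the missing argument later via local super-barriers at interior points plus Proposition~\ref{prop-init}, but the logical order should be cleaned up. A tidier formulation is to take the Perron class to be subsolutions of the \emph{full} problem \eqref{Eqn}-\eqref{Vbc-GC}-\eqref{init} with the initial datum in the viscosity sense; then $u^*$ and $u_*$ are automatically sub/supersolutions of the full problem, and Proposition~\ref{prop-init} directly yields $u^*(\cdot,0)\leq u_0\leq u_*(\cdot,0)$ on all of $\Omegb$ without any separate interior barrier construction. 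This is what the paper does (in one line), and it removes the need for your local $\psi$-functions altogether.
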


\begin{proof}
    We just give the main arguments since the proof is based on the classical Perron's method (\cf
    Ishii~\cite{Is-Per}, see also \cite{Users}).

    The key point is to build sub and supersolutions of the problem and they have the form
    \[ u^\pm (x,t):=\pm k_1 t \pm k_2 \varphi(d(x))+ k_3 ,\]
    where $\varphi$ is $\varphi_1$ defined at the beginning of Section~\ref{sec:GCR.LCR} and $k_1,k_2,k_3$
    are constant which are chosen in the following way: 
    \begin{enumerate}
    \item[$(i)$] $k_2$ is chosen in order to have $u^\pm$ satisfying the Ventcell boundary
        condition, \cf Lemma~\ref{dist-func}.
    \item[$(ii)$] Then $k_1$ is chosen in order to ensure that $u^\pm$ are sub and supersolution of \eqref{Eqn}.
    \item[$(iii)$] Finally $k_3$ is chosen in order to have
        \[u^-(x,0)\leq u_0(x)\leq u^+(x) \quad \hbox{on  }\Omegb.\]
    \end{enumerate}
    With all these properties, one can apply Perron's method---with an initial data being understood
    in the viscosity sense. And the result is proved.
\end{proof}

%-----------------------------------------
\subsection*{Including some $u_t$-dependence in $G$}

It is clear that boundary conditions like
\begin{equation}\label{Vutgen}
\G(x,t,u_t,Du,D_T^2u)= 0 \quad \hbox{on }\domeg \times (0,T) ,
\end{equation}
where $\G(x,t,p_t,p,M_T)$ is an increasing function in $p_t$ can be treated analogously, typically
\begin{equation}\label{Vutpart}
u_t-\frac{\partial u}{\partial x_N} + G(x',t,\Dxp  u,\Ddxp  u)= 0 \quad \hbox{on }\{x_N=0\} \times (0,T).
\end{equation}
The assumptions on the dependence in $p_t$ are analogous to those made on the tangential part of $p$ since, as it is 
already the case in this article, $t$ can be seen as a tangent variable to the boundary $\domeg \times (0,T)$.

We refer the reader to Remark~\ref{simple-pb} in order to be convinced that the $u_t$-dependence does no create any difficulty
in the case of \eqref{Vutpart} neither in the first-order equation case, nor in the second-order one. On the contrary, for \eqref{Vutgen}, we have
to impose quasiconvexity assumptions on $\G(x,t,p_t,p,M_T)$ \wrt $(p_t, p, M_T)$, hence we have, in particular, quite an unusual assumption on the
dependence \wrt $u_t$.

%-----------------------------------------
\subsection*{The stationary case}

We point out that the stationary case can be treated analogously provided that the nonlinearity of
the equation is proper in the sense of \cite{Users}. We are not going to give any detail here but
both the existence and comparison result hold in this framework, as the reader will certainly be
able to check.

%-----------------------------------------
\subsection*{A few open questions}

Via \hyp{Cont}, we assume the same regularity for $F(x,t,p,X),G(x',t,p',X')$ in $x$ or $x'$ and $t$.
We have no idea if this assumption is really necessary or if one can replace it by some weaker
continuity requirement for the $t$-variable.

In the same way, the Lipschitz continuity assumption in \hyp{Gen}-$(i)$ may be seen as natural for
$G$ or $\G$ as part of the requirement for a ``good Ventcell boundary condition'', the linear growth
in $p$ and $M_T$ ensuring---in some sense---that the normal derivative can control them. However,
this assumption seems less natural for $F$ which, for example, may have some superlinear gradient
growth, which is incompatible with \hyp{Gen}-$(i)$. We do not address this question here but it is clearly
a problem to be considered.

The $C^{0,\alpha}$-regularity of solutions for $\alpha \in (0,1]$ is an interesting question which
is also a prerequisite to address other problems like the large time behavior of solutions via the
study of the {\em ergodic problem}. 

Considering the methods we used to get the comparison result suggests that these regularity results
should follow from similar ideas.

\section*{Appendix}

\subsection{Computation of sup-convolutions}
\label{app:supconv}

In various places in the paper we use the following computation (on the tangential variables
$x'$ and $t$)
\begin{lemma}
    For any $\e>\alpha>0$, we have
    \[
        \sup_{z\in\R^{N}}\Big(\frac{|z-y|^2}{\e^2}-\frac{|x-z|^2}{\alpha^2}\Big)=\Cea\,\frac{|x-y|^2}{\e^2},
    \]
    where $\Cea:=(1-\frac{\alpha^2}{\e^2})^{-1}$.
\end{lemma}
\begin{proof} We first remark that, since $\e>\alpha>0$, the function 
$$ z \mapsto \frac{|z-y|^2}{\e^2}-\frac{|x-z|^2}{\alpha^2},$$
is coercive and therefore the sup is actually attained at some point $z_0\in \R^N$. Then
the proof is just a simple calculation: indeed, $z_0$ satisfies the equation
    $\frac{z_0-y}{\e^2}=\frac{z_0-x}{\alpha^2}$, which also yields
    $(1-\frac{\alpha^2}{\e^2})(z_0-y)=(x-y)$. The result directly follows.
\end{proof}

%We also use a sup-convolution in the normal direction in \eqref{SC0}. But of
%course,
%\[ 
%    \sup_{z_N\in\R}\Big(L|z_N-y_N|-\frac{1}{\beta}|x_N-z_N|\Big)=L|x_N-y_N|
%\]
%since the sup is attained for $z_N=x_N$.

%-------------------------------------------------
\subsection{Convex combination of subsolutions}
\label{app:convcomb}

We are interested here in Lipschitz continuous subsolutions of the following Ventcell problem for some $r>0$
\begin{equation}\label{eqn-ins}
\tilde F(u_{x_N}) \leq 0 \quad \hbox{in }\Omega_r:=\{x=(x',x_N): x'\in B(0,r), x_N\in (0,r)\},
\end{equation}
\vspace*{-4mm}
\begin{equation}\label{eqn-bound}
-u_{x_N}+\tilde G(\Dxp  u, \Ddxp  u) \leq 0 \quad \hbox{in }\H_r:=B(0,r)\times \{0\},
\end{equation}
where $\tilde F(p_N), \tilde G(p',X')$ are functions satisfying some assumptions below.
To state and prove the results of this appendix, we have dropped the $t$-variable for the sake of simplicity. But it is clear enough that
this variable plays no role here.

\begin{theorem}\label{comb-conv} 
    Assume that $p_N\mapsto \tilde F(p_N)$ and $(p',M')\mapsto\tilde
    G(p',M')$ are continuous, quasiconvex functions and that $\tilde G$ satisfies the ellipticity
    condition. If $u_1, u_2$ are Lipschitz continuous subsolutions of
    \eqref{eqn-ins}-\eqref{eqn-bound}, then, any convex combination $w:=\rho_1 u_1 +\rho_2 u_2$
    ($\rho_1,\rho_2\geq 0$, $\rho_1+\rho_2=1$) is also a subsolution of
    \eqref{eqn-ins}-\eqref{eqn-bound}.
\end{theorem}

Clearly the main interest of this result is that the convex combination $w$ is a subsolution {\em up to the boundary}; inside the domain,
the result is classical and the first versions already appear in the book of Lions \cite{L} for convex Hamiltonian, mainly using the fact that a
$W^{1,\infty}$-function which satisfies the subsolution inequality in the almost everywhere sense is a viscosity subsolution. And the extension
to quasiconvex nonlinearities does not present any additional difficulty since it follows from the same argument.

But, of course, in more general contexts like second-order equation or, as it is the case here, with a boundary condition in the viscosity sense,
the situation is completely different and one faces a non-trivial difficulty since we have to argue with a doubling of variables which both requires
a good control of the penalization terms and an ad hoc treatment of the boundary condition. And actually, even if we consider a Neumann boundary 
condition instead of a Ventcell one, we do not know how to obtain a far more general result than Theorem~\ref{comb-conv}.

Last (but not least) remark: for us, the main interest of Theorem~\ref{comb-conv} is to be the main step in order to the tangential regularization
of subsolutions (cf. Corollary~\ref{regul-smooth} below).

\begin{proof} Let $\phi$ be a smooth test-function and $\xb=(\xb',\xb_N)\in \Omega_r\cup \H_r$ be a strict local maximum point of
    $w-\phi$ on $\overline{\Omega}_r$. Of course, as we mention it above, the only difficulty is when $\xb\in \H_r$ since, if $\xb\in \Omega_r$, the result is classical.
Hence we may assume that $\xb_N=0$.

Our aim is to prove that
$$ \min \left( \tilde F(\phi_{x_N} (\xb)), -\phi_{x_N}(\xb)+\tilde G(\Dxp \phi(\xb),\Ddxp  \phi(\xb) \right) \leq 0.$$
To do so, we assume that $\tilde F(\phi_{x_N} (\xb))>0$ and we are going to show that
\begin{equation}\label{expect-ineq}
-\phi_{x_N}(\xb)+\tilde G(\Dxp \phi(\xb),\Ddxp  \phi)(\xb) \leq 0.
\end{equation}

We first use Lions and Souganidis arguments (cf. \cite{LiSo1,LiSo2}, see also \cite{BCbook}): since $\tilde F(w_{x_N}) \leq 0$
in $\Omega_r$, if
$$ \underline p=\liminf \left[\frac{w(\xb',x_N)-w(\xb',0)}{x_N}\right] \quad\hbox{and}\quad \overline p=\limsup \left[\frac{w(\xb',x_N)-w(\xb',0)}{x_N}\right],$$
we have $\tilde F(p) \leq 0$ for any $\underline p\leq p \leq \overline p$. Moreover, the maximum point property of $\xb$ also
implies that $\overline p \leq \phi_{x_N}(\xb)$ and, by the quasiconvexity of $\tilde F$, we deduce that $\tilde F$ is nondecreasing in a
neighborhood of $[\phi_{x_N}(\xb),+\infty)$.

Then, in order to show that \eqref{expect-ineq} holds, we triple the variables and consider the function $\Phi  (x_1,x_2,x)$ given by
$$\rho_1 (u_1(x_1)-\phi (x_1)) +\rho_2 (u_2 (x_2)-\phi (x_2))- \frac{|x_N-x_N^1|^2}{\e}-\frac{|x_N-x_N^2|^2}{\e}$$
$$ - \frac{|x'-x_1'|^2}{\beta}-\frac{|x'-x_2'|^2}{\beta},$$
with $x=(x',x_N), x_1=(x_1',x_N^1), x_2=(x_2',x_N^2)$ and where $\e,\beta>0$ are devoted to tend to $0$.

We look at maximum points of this function \wrt all the variables. Because of the strict maximum point property of $\xb$, there exists a sequence of maximum points
of this function which converges to $(\xb,\xb,\xb)$ when $\e, \beta \to 0$; to simplify the notations, we drop the dependence in $\e$ and $\beta$
and we just denote such a maximum point by $(x,x_1,x_2)$.

We have two cases.

$\bullet$ If $x_N>0$, then the maximum property in $x_N$ yields
$$ x_N=\frac12( x_N^1+x_N^2).$$
If $x_N^1>x_N$, then $x_N^1>0$ and, since $u_1$ is a subsolution for $\tilde F$, we have 
$$ \tilde F\left(\phi_{x_N} (x_1) + \frac{2(x^1_N-x_N)}{\rho_1 \e} \right)\leq 0.$$
But this inequality cannot holds for $\e,\beta$ small enough since $\phi_{x_N} (x_1)$ is close to $\phi_{x_N} (\xb)$,
 $\tilde F(\phi_{x_N} (\xb))>0$ and $\tilde F$ is nondecreasing in a
neighborhood of $[\phi_{x_N}(\xb),+\infty)$.

Therefore this case cannot happen and necessarily $x_N^1\leq x_N$. The same argument also shows that
$x_N^2\leq x_N$ so that $x_N^1=x_N^2=x_N$. But, for a similar reason as above, none of the
inequalities $ \tilde F(\phi_{x_N} (x_1)), \tilde F(\phi_{x_N} (x_2))\leq 0$ can be true so
we conclude that this case cannot hold, and we are necessarily in the case $x_N=0$.

 $\bullet$ If $x_N=0$, the maximum property in $x_N$ implies the (apparently) weaker inequality
$$ x_N=0 \geq \frac12( x_N^1+x_N^2).$$
But, of course, this immediately implies that $x_N^1=x_N^2=x_N=0$.

In order to obtain the right viscosity subsolution inequalities, we notice that, on one hand, we can assume without loss of generality that
$u_1,u_2$ are semi-convex in the $x'$-variable and, on the other, that the above arguments apply as well to any function of the type
$\Phi(x_1,x_2,x) -q_1\cdot x_1'-q_2\cdot x_2'$ where $q_1,q_2 \in \R^{N-1}$ are close to $0$. With these two remarks, the Jensen-Ishii Lemma
can be applied without any difficulty (cf. \cite{Users}) and yield the existence of $p'_1,p'_2\in \R^{N-1}$ and
$(N-1)\times(N-1)$ symmetric matrices $X_1,X_2$ such that
$$\min( \tilde F(\phi_{x_N} (x_1)),-\phi_{x_N}(x_1)+\tilde G(p'_1,X_1)\leq 0,$$
$$\min( \tilde F(\phi_{x_N} (x_2)),-\phi_{x_N}(x_2)+\tilde G(p'_2,X_2)\leq 0,$$
where, for $i=1,2$
$$p'_i = \Dxp \phi(x_i)+ \frac{2(x'_i-x')}{\rho_i\beta},$$
and where $X_1,X_2$ satisfy, for any $r_1,r_2,r \in \R^{N-1}$
$$\rho_1 \left( X_1-\Ddxp  \phi (x_1)\right)r_1\cdot r_1 +\rho_2 \left(X_2-\Ddxp   \phi (x_2)\right)r_2\cdot r_2\leq
 \frac{|r-r_1|^2}{\beta}-\frac{|r-r_2|^2}{\beta}.$$
By choosing $r=r_1=r_2$, we deduce that
$$ \rho_1 X_1 +\rho_2 X_2 \leq \rho_1 \Ddxp   \phi (x_1) +\rho_2 \Ddxp   \phi (x_2) =
\Ddxp   \phi (\xb)+ o_{\e,\beta}(1)\; ,$$
and we also have
\begin{align*} 
\rho_1 p'_1 +\rho_2 p'_2= & \rho_1 \Dxp \phi(x_1)+\rho_2 \Dxp \phi(x_2)+\frac{2(x'_1-x')}{\beta}+\frac{2(x'_2-x')}{\beta}\\
    =& \rho_1 \Dxp \phi(x_1)+\rho_2 \Dxp \phi(x_2)=\Dxp \phi(\xb)+o_{\eps,\beta}(1),
\end{align*}
since, by the maximum property for $x'$, we have $2x'=x'_1+x'_2$.

In the above subsolutions inequalities, we notice that, for $\e,\beta$ small enough, we have
$$ \tilde F(\phi_{x_N} (x',x_N^1)),
\tilde F(\phi_{x_N} (y',x_N^2))> 0,$$ and therefore
$$-\phi_{x_N}(x_1)+\tilde G(p'_1,X_1)\leq 0,$$
$$-\phi_{x_N}(x_2)+\tilde G(p'_2,X_2))\leq 0.$$
The conclusion follows easily since, by quasiconvexity, 
$$\tilde G (\rho_1 p'_1 +\rho_2 p'_2,\rho_1 X_1 +\rho_2 X_2) \leq \max( \tilde G(p'_1,X_1),\tilde G(p'_2,X_2)),$$ and that
$\phi_{x_N}(x_1), \phi_{x_N}(x_2)\to \phi_{x_N}(\xb)$ when $\e,\beta \to 0$.
\end{proof}

%--------------------------------------------
\subsection{Regularization of subsolutions}
\label{app:regsubsol}

As a quasi-immediate corollary of Theorem~\ref{comb-conv}, we have the following regularization result in which $(\rho_\eps)_\eps$ denotes
a sequence  of positive, $C^\infty$-functions on $\R^{N-1}$, $\rho_\eps$ having a compact support in $B(0,\eps)$ and with
$\int_{\R^{N-1}}\,\rho_\eps(e)de=1$. 
\begin{corollary}\label{regul-smooth} Under the assumptions of Theorem~\ref{comb-conv} on $\tilde
    F,\ \tilde G$, let $u$ be a Lipschitz continuous subsolution of \eqref{eqn-ins}-\eqref{eqn-bound}. 
    If, for $\e\ll r$, $u^\e: B(0,r-\e)\times (0,r)\to \R$ is given by
    $$u^\e(x):= \int_{|e| <\eps} u(x'-e,x_N)\rho_\eps(e)de\,,$$
    then $u^\e$ is a Lipschitz continuous subsolution of \eqref{eqn-ins}-\eqref{eqn-bound} in 
    $B(0,r-\e)\times (0,r)$. Moreover, for any $0\leq x_N <r$, $x'\mapsto u^\e(x',x_N)$ is a
    smooth function and $\Dxp \ue(x),  \Ddxp  \ue(x)$ are continuous functions of $x$.
\end{corollary}

\begin{proof}
    Let us begin by mentioning that of course, by induction Theorem~\ref{comb-conv} can be generalized to any
    (finite) convex combination of subsolutions $(u_k)$. 

    Considering now a discretization of the
    convolution, we see that $u^\e$ can be approximated by a finite sum: for any $\eta>0$ there
    exists $N\in\N$, $(\mu_k)_{k=1..N}$ and $(e_k)_{k=1..N}$ such that for any $k=1..N$, $\mu_k\geq0$,
    $\sum_{k=1}^N\mu_k=1$, and the function
    $$u^\e_N(x):=\sum_{k=1}^N \mu_k u(x'-e_k,x_N)$$ satisfies
    $|u_N^\e-u^\e|\leq \eta$ locally uniformly in 
    $B(0,r-\e)\times (0,r)$.

    Notice that since $\tilde F$ and $\tilde G$ do not depend on $x$, for any $k=1..N$,
    $x\mapsto u(x'-e_k,x_N)$ is a subsolution of \eqref{eqn-ins}-\eqref{eqn-bound}, so that 
    for any $N\in\N$, $u_N^\e$ is also a subsolution of \eqref{eqn-ins}-\eqref{eqn-bound}. Finally,
    since $u_N^\e\to u^\e$ locally uniformly, we use the stability property of viscosity solutions
    to conclude that $u^\e$ is also a viscosity subsolution of \eqref{eqn-ins}-\eqref{eqn-bound} in
    $B(0,r-\e)\times (0,r)$. Of course, the regularity property holds as a result of the
    convolution, the function $u$ being continuous itself.
\end{proof}

\end{document}